\setlist[enumerate]{label=(\alph*)}
\numberwithin{equation}{section}
\theoremstyle{plain}
\newtheorem{theorem}{Theorem}[section]
\newtheorem{corollary}[theorem]{Corollary}
\newtheorem{claim}[theorem]{Claim}
\newtheorem{proposition}[theorem]{Proposition}
\newtheorem{lemma}[theorem]{Lemma}
\theoremstyle{remark}
\newtheorem{remark}[theorem]{Remark}
\newtheorem{assumption}{Assumption}
\newcommand{\EE}{\mathtt{E}}
\newcommand{\E}{\mathbb{E}}
\newcommand{\N}{\mathbb{N}}
\newcommand{\PPP}{\mathtt{P}}
\newcommand{\PP}{\mathbb{P}}
\newcommand{\R}{\mathbb{R}}
\newcommand{\bbone}{\boldsymbol 1}
\newcommand{\ei}{\mathtt{ei}}
\newcommand{\es}{\mathtt{es}}
\newcommand{\vbar}{\overline{v}}
\newcommand*\D{\mathop{}\!\mathrm{d}} 
\DeclareMathOperator*{\essinf}{ess\,inf}
\DeclareMathOperator*{\esssup}{ess\,sup}
\DeclareMathOperator{\var}{Var}
\DeclareMathOperator{\diam}{diam}
\newcommand{\cerny}[1]{\v Cern\'y} 
\newcommand{\parenthezises}[1]{\arabic{#1}}
\begin{document}

\begin{frontmatter}

\title{On the tightness of the maximum of branching Brownian
  motion in random environment}

\runtitle{Tightness of the maximum of BBM in random environment}

\begin{aug}
  \author[A]{\fnms{Jiří}~\snm{Černý}
    \ead[label=e1]{jiri.cerny@unibas.ch}\orcid{0000-0001-7723-9245}},
  \author[B]{\fnms{Alexander}~\snm{Drewitz}
    \ead[label=e2]{adrewitz@uni-koeln.de}\orcid{0000-0002-5546-3614}}
  \and
  \author[A]{\fnms{Pascal}~\snm{Oswald}
    \ead[label=e3]{pascalamadeus.oswald@unibas.ch}}

  \address[A]{Department of mathematics and computer science,
    University of Basel\printead[presep={,\ }]{e1,e3}}

  \address[B]{
    Department Mathematik/Informatik, Universität zu Köln\printead[presep={,\ }]{e2}}
\end{aug}

\begin{abstract}
  We consider one-dimensional branching Brownian motion in spatially
  random branching environment (BBMRE) and show that for almost every
  realisation of the environment, the distributions of the maximal
  particle of the BBMRE re-centred around its median are tight as time
  evolves. This result is in stark contrast to the fact that the
  transition fronts in the solution to the randomised
  Fisher--Kolmogorov--Petrovskii--Piskunov (F-KPP) equation are, in
  general, not bounded uniformly in time. In particular, this highlights
  that---when compared to the settings of homogeneous branching Brownian
  motion and the F-KPP equation in a homogeneous environment---the
  introduction of a random environment leads to a much more intricate
  behaviour.
\end{abstract}

\begin{keyword}[class=MSC]
  \kwd[Primary ]{60J80}
  \kwd{60G70}
  \kwd{60K37}
  \kwd{82B44}
  \kwd[; secondary ]{35B40}
\end{keyword}

\begin{keyword}
  \kwd{Branching Brownian motion}
  \kwd{random environment}
  \kwd{tightness}
  \kwd{F-KPP equation}
\end{keyword}

\end{frontmatter}

\section{Introduction}
\label{sec:introduction}

The behaviour of the position of the maximally---or, equivalently,
minimally---displaced particle in various variants of branching random
walk (BRW) and branching Brownian motion (BBM) has been the subject of
intensive research over the last couple of decades
\cite{Bramson1978maximaldisplacement, Bramson1983FKPP,
  Bramson2007tightnessBRW, AddarioBerry2009minima, HuShi2009minimal,
  Ai-13}. While initially most of the work focused on branching systems
with homogeneous branching rates, there has recently been an increased
activity in the investigation of branching random walks with
non-homogeneous branching rates that depend on either time or space
mostly in special \emph{deterministic} ways, see \cite{LaSe-88, LaSe-89,
  FangZeitouni2012BRWRE,FaZe-12b, BeBrHaHaRo-12, MaZe-13, Ma-13,
  BoHa-14,BoHa-15,CD2020, Kriechbaum2021subsequential,
  HoReSo-22,Kriechbaum2022BRWRE}.

In this article, we continue the study of the maximally displaced
particle in the model of branching Brownian motion with spatially random
branching environment (BBMRE). The study of BBMRE was initiated in
\cite{DS2022} as a tool for investigating properties of the randomised
Fisher--Kolmogorov--Petrovskii--Piskunov (F-KPP) equation, building
on the previous work \cite{CD2020} on a discrete-space analogue, i.e.\
branching random walk in i.i.d.~random environment (BRWRE). The
techniques developed in \cite{CD2020, DS2022} also lend themselves to
obtain refined information on the front of the solution of the randomised
F-KPP equation \cite{CDS2021}. Subsequently, the techniques and results
of \cite{CD2020} have been extended to the continuum space setting of
BBMRE in \cite{HoReSo-22}.

We complement the above body of research by addressing a seemingly
simple, but subtle problem that arises naturally, and which has also been
formulated as an open question in \cite{CD2020}. More precisely, we show
that the distributions of the position of the maximally displaced
particle of the BBMRE, when re-centred around its median, form a tight
family of distributions as time evolves. While establishing tightness
might a priori not look like an overly intricate problem, we take the
opportunity to emphasise that such a preconception is erroneous, see also
\cite{Bramson2007tightnessBRW,BramsonZeitouni2009recursion}. Our result
is particularly interesting as it sharply contrasts the result
established in \cite{CDS2021}, where it is shown that the transition
fronts of the solution to the randomised F-KPP equation are, in general,
unbounded in time. In the homogeneous setting, such a dichotomy cannot be
observed since, a fortiori, there is a duality between these two objects
in that tightness of the re-centred maximum of BBM is equivalent to the
uniform boundedness in time of the transition fronts of the solution to
the F-KPP equation.

\subsection{Homogeneous BBM and F-KPP equation}

To explain this duality in more detail, we start by recalling the model
in the homogeneous situation, which will also serve as a point of
reference throughout the article. For a (binary) branching Brownian
motion with homogeneous branching rate equal to one, started from a
single particle located at the origin at time $0$, we denote its maximal
displacement at time $t \ge 0$ by $M(t)$, and write
\begin{equation} \label{eqn:FKPPdistMax}
    w(t,x) = P(M(t)\ge x),
\end{equation}
for the probability that this displacement exceeds $x\in \mathbb R$.
Then, the function $w(t,x)$
solves a non-linear PDE, known as the
Fisher--Kolmogorov--Petrovskii--Piskunov equation,
\begin{equation}\label{eqn:hom_FKPP}
  \partial_t w(t,x)
  = \frac{1}{2}\partial_x^2w(t,x) + w(t,x)(1-w(t,x)),
  \qquad t>0, x\in \R,
\end{equation}
with the initial datum $w(0,\cdot) = \bbone_{(-\infty,0]}$ of Heaviside
type, see \cite{Ikeda1968branching,mckean1975KPP}. Moreover, it is well
known that as $t \to \infty$, the solution to \eqref{eqn:hom_FKPP}
approaches a \emph{travelling wave} $g$ in the following sense: for an
appropriate function $m : [0,\infty) \to [0,\infty)$ one has that
\begin{equation}
  \label{eqn:homogeneous_travelling_wave}
  w(t,m(t)+ \cdot)  \to   g \quad \text{uniformly as $t \to \infty$}
\end{equation}
for a decreasing function $g$ satisfying $\lim_{x\to\infty}g(x) = 0$ and
$\lim_{x\to-\infty}g(x) = 1$. A critical ingredient in the proof of this
convergence is that, again for $m(t)$ being chosen appropriately, one has
\begin{align}
  \label{eqn:homogeneous_stretching}
  \begin{split}
    &w(t, x+ m(t))\text{ is decreasing in $t$ for $x<0$, and}\\
    &w(t, x+ m(t))\text{ is increasing in $t$ for $x>0$}.
  \end{split}
\end{align}
Property \eqref{eqn:homogeneous_travelling_wave}
immediately yields for every $\varepsilon>0$ the existence of some
$r_{\varepsilon} \in (0,\infty)$ such that
\begin{equation}
  \label{eqn:homogeneous_F-KPP_tightness}
  w(t,m(t)+r_{\varepsilon})-w(t, m(t) -r_{\varepsilon})
  > 1-\varepsilon  \qquad
  \text{for all $t\ge 0$.}
\end{equation}
Put differently, the family $(M(t)-m(t))_{t\ge 0}$ is tight. Another,
essentially trivial, consequence of
\eqref{eqn:homogeneous_travelling_wave} is the uniform boundedness of the
width of the transition front of the solution to \eqref{eqn:hom_FKPP};
that is, that for every $\varepsilon \in (0,1/2)$,
\begin{equation}
  \label{eqn:homboundedness}
  \limsup_{t\to \infty} \diam \big( \{x \in \R : w(t,x)
      \in [\varepsilon,1-\varepsilon]\}\big) < \infty.
\end{equation}

In this context, it is worth pointing out that the above line of
reasoning implicitly uses the reflection symmetry of Brownian motion and
the homogeneity of the branching environment. As a consequence, this
proof technique breaks down in the presence of an inhomogeneous
environment, and the relationship between the solutions of the F-KPP
equation and the maximum of BBMRE becomes more intricate than that given
in \eqref{eqn:FKPPdistMax} and \eqref{eqn:hom_FKPP},
cf.~Section~\ref{ssec:PAM}.

\subsection{Randomised F-KPP equation}
In the inhomogeneous setting of a random potential, as considered in the
current paper, the respective randomised F-KPP equation has been
investigated in \cite{CDS2021}. In that source it has been established
that, for a canonical choice of random potentials $\xi$, the transition
front of the solution to the inhomogeneous F-KPP equation (which is
  discussed in more detail in Section~\ref{ssec:PAM})
\begin{equation}
  \label{eqn:ranFKPPintro}
  \partial_t w^{\xi}(t,x)
  = \frac{1}{2}\partial_x^2w^{\xi}(t,x)
  + \xi(x)w^{\xi}(t,x)(1-w^{\xi}(t,x)),
  \qquad t>0,x\in \R,
\end{equation}
with the initial condition $w^{\xi}(0,\cdot) = \bbone_{(-\infty,0]}$ does
not need to be uniformly bounded in time. More precisely, in contrast
to~\eqref{eqn:homboundedness}, it follows from
\cite[Theorem~2.3]{CDS2021} that there are random potentials $\xi$ within
the class of inhomogeneities considered in the current paper, such that
$\PP$-a.s.,~for all ${\varepsilon\in (0,1/2)}$,
\begin{equation}
\label{eqn:unbounded_TF}
\limsup_{t\to \infty} \diam \big( \{x \in \R : w^{\xi}(t,x)
    \in [\varepsilon,1-\varepsilon]\}\big) = +\infty.
\end{equation}

It might hence be surprising and is non-trivial to prove that for BBMRE
in the random potential $\xi$ we obtain tightness for the re-centred
family of maxima, and a novel approach is required in order to address
this situation adequately.

It is worthwhile to note that the PDE results of \cite{CDS2021} have been
obtained by taking advantage of almost exclusively probabilistic
techniques. In the current article, however, the probabilistic main
result will be proven via a combination of analytic and probabilistic
techniques.

\section{Definition of the model and the main result}
\label{sec:model_and_results}

We work with a model of \emph{branching Brownian motion in random
  branching environment} (BBMRE) introduced in \cite{CDS2021,DS2022} as a
continuous space version of the branching random walk in random
environment model studied in \cite{CD2020}. The random environment is
given by a stochastic process $\xi = (\xi(x) )_{x \in \R}$ defined on
some probability space $(\Omega,\mathcal{F},\PP)$ which fulfils the
following assumptions.

\begin{assumption}
  \phantomsection
  \label{ass:environment}
  \begin{itemize}
    \item $\xi$ is stationary, that is, for every $h \in \R$ one has
    \begin{equation}
      \label{eqn:stationarity_condition}
      (\xi(x))_{x \in \R} \stackrel{(d)}{=} (\xi(x+h) )_{x\in \R}.
    \end{equation}

    \item
    $\xi$ fulfils a  \emph{$\psi$-mixing} condition:  There exists a
    continuous non-increasing function
    $\psi:[0,\infty)\to[0,\infty)$ satisfying
    $\sum_{k=1}^\infty \psi(k)<\infty$ such that (using the notation
    $\mathcal F_A = \sigma (\xi (x):x\in A)$ for
    $A\subset \mathbb R$) for all
    $Y\in L^1(\Omega ,\mathcal F_{(-\infty,j]},\mathbb P)$, and all
    $Z\in L^1(\Omega ,\mathcal F_{[k,\infty)},\mathbb P)$ we have
    \begin{equation}
      \begin{split}
        \label{eqn:MIX}
        \big| \E\big[Y-\E[Y] \mid  \mathcal F_{[k,\infty)}\big]\big|
          &\leq \E[|Y|]\, \psi(k-j),
          \\
          \big| \E\big[Z-\E[Z] \mid  \mathcal F_{(-\infty,j]}\big]\big|
          &\leq \E[|Z|]\,\psi({k-j}).
      \end{split}
    \end{equation}
    (Note that these conditions imply the ergodicity of $\xi$ with
    respect to the usual shift operator.)

    \item The sample paths of $\xi$ are $\PP$-a.s.~locally Hölder
    continuous, that is, for almost every $\xi$ there exists
    $\alpha = \alpha(\xi)\in (0,1)$ and for every compact $K\subseteq \R$
    a constant $C = C(K,\xi)>0$ such that
    \begin{equation}
      \label{eqn:Hoelder_condition}
      |\xi(x)-\xi(y)| \le C |x-y|^{\alpha}, \qquad \text{for all } x,y\in K.
    \end{equation}

    \item $\xi$ is uniformly bounded in the sense that the essential
    infimum and supremum of the random variable $\xi(0)$ (and thus also
      of $\xi(x)$, $x\in \mathbb R$, by \eqref{eqn:stationarity_condition})
    satisfy
    \begin{equation}
      \label{eqn:uniformly_bounded}
      0<\ei := \essinf \xi(0)<\esssup \xi(0) =: \es < \infty.
    \end{equation}

  \end{itemize}
\end{assumption}

In the current article, we do not explicitly make use of the mixing
condition. However, in particular in
Section~\ref{sec:Perturbation_results_for_the_PAM}, we will employ some
of the results developed in \cite{CDS2021,DS2022} which depend on this
mixing assumption.

The dynamics of BBMRE started at a position $x \in \R$ is as follows.
Given a realisation of the environment $\xi$,  we place one particle at
$x$ at time $t = 0$. As time evolves, the particle follows the trajectory
of a standard Brownian motion $(X_t)_{t\ge 0}$. Additionally and
independently of everything else, while at position $y$, the particle is
killed with rate $\xi(y)$. Immediately after its death, the particle is
replaced by $k$ independent copies at the site of death, according to
some fixed offspring distribution $(p_k)_{k \ge 1}$. All $k$ descendants
evolve independently of each other according to the same stochastic
diffusion-branching dynamics.

We denote by $\PPP_x^{\xi}$ the quenched law of a BBMRE, started at $x$
and write $\EE_x^{\xi}$ for the corresponding expectation. Moreover, we
denote by $N(t)$ the set of particles alive at time $t$. For any particle
$\nu \in N(t)$ we denote by $(X_s^{\nu})_{s \in [0,t]}$ the spatial
trajectory of the genealogy of ancestral particles of $\nu$ (unique at
  any given time) up to time $t$. Our main interest lies in the
maximally displaced particle of the BBMRE at time $t$,
\begin{equation*}
  M(t) := \sup\{X_t^{\nu} : \nu \in N(t)\}.
\end{equation*}

Throughout this article, we deal with supercritical branching such that
the offspring distribution has second moments and particles always have
at least one offspring.

\begin{assumption}
  \label{ass:OD}
  The offspring distribution $(p_k)_{k\ge 1}$ satisfies
  \begin{equation}
    \label{ass:offspring_distribution}
    \sum_{k = 1}^{\infty}kp_k =: \mu>1,
    \qquad\text{and}\qquad
    \sum_{k = 1}^{\infty}k^2p_k =:\mu_2<\infty.
  \end{equation}
\end{assumption}

Under these assumptions, the maximally displaced particle $M(t)$ satisfies
a law of large numbers for some non-random asymptotic velocity
$v_0 \in (0,\infty)$; that is, for $\mathbb P$-a.e.~$\xi$ one has
\begin{equation}
   \label{eqn:MtLLN}
    \lim_{t\to \infty} \frac{M(t)}t = v_0,  \quad
    \text{$\PPP^\xi_0$-a.s.},
\end{equation}
see \cite[Corollary 1.5]{HoReSo-22}. (Note also that convergence in
  probability follows from classical results of Freidlin, \cite[Theorem
    7.6.1]{Freidlin1985functionalintegration}.) The asymptotic velocity
can be characterised as the unique positive root of the \emph{Lyapunov
  exponent} $\lambda$, which is a deterministic function
$\lambda : \R \to \R$ that admits the representation
\begin{equation}
  \label{eqn:Lyapunov_exponent}
  \lambda(v) = \lim_{t \to \infty} \frac{1}{t}
  \ln \EE_{0}^{\xi}\big[\big|\{ \nu \in N(t) : X_t^{\nu} \ge
      vt\}\big|\big],
  \quad \PP\text{-a.s.}
\end{equation}
Under Assumptions \ref{ass:environment} and \ref{ass:OD}, the function
$\lambda$ is non-increasing on $[0,\infty)$, concave,  and there exists a
critical value $v_c \ge 0$ such that $\lambda$ is linear on $[0,v_c]$ and
strictly concave on $[v_c,\infty)$, see
e.g.~\cite[Corollary~3.10]{DS2022}. As in \cite{CD2020,CDS2021,DS2022} we
make the following technical assumption.

\begin{assumption}
  \label{ass:VEL}
  We only consider BBMREs whose asymptotic speed satisfies
  \begin{equation}
    \label{eqn:VEL}
    v_0>v_c.
  \end{equation}
\end{assumption}

Essentially, this condition allows for the introduction of a \emph{tilted}
probability measure in the \emph{ballistic phase}, under which a
Brownian particle $(X_t)_{t\ge0}$ moves \emph{on average} with speed $v_0$
up to time $t$, cf.~Section~\ref{sec:tilted_measure}. By the same
argument as in \cite[Lemma~A.4]{CD2020} one can show that \eqref{eqn:VEL}
is satisfied by a rich class of environments. We refer also to
\cite[Section 4.4]{DS2022} for a more in-depth discussion on the
condition \eqref{eqn:VEL} and in particular to
\cite[Proposition~4.10]{DS2022} where environments are constructed which
satisfy Assumption~\ref{ass:environment}, but for which \eqref{eqn:VEL}
fails. Due to the length of the construction we do not replicate it here.

Finally,  we also define for $\varepsilon \in (0,1)$ the quenched
quantiles for the distribution of $M(t)$ where the process is started at
the origin,
\begin{equation}
  \label{eqn:meps}
  m_{\varepsilon}^{\xi}(t) :=
  \inf \big\{ y \in \R : \PPP_0^{\xi}( M(t)\le y)\ge \varepsilon\big\}.
\end{equation}
For notational convenience, we omit the subscript when $\varepsilon =1/2$
and write $m^{\xi}(t)$ for the median of the distribution.

With the above notation at our disposal, we can state our main result.

\begin{theorem}
  \label{thm:BBMRE_is_tight}
  Under Assumptions \ref{ass:environment}--\ref{ass:VEL},
  for almost every realisation of the environment $\xi$, the
  family $\big(M(t)-m^{\xi}(t) \big)_{t\ge0}$ is tight under $\PPP_0^{\xi}$.
\end{theorem}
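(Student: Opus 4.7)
The plan is to deduce tightness from a uniform-in-$t$ upper tail bound $\PPP_0^\xi(M(t) \geq m^\xi(t) + r) \leq \varepsilon/2$ and a uniform-in-$t$ lower tail bound $\PPP_0^\xi(M(t) \leq m^\xi(t) - r) \leq \varepsilon/2$, each for some $r = r_\varepsilon$ sufficiently large and for $\PP$-a.e.~$\xi$. Before turning to the details, it is worth noting that because the random environment destroys translation invariance, the distribution $y \mapsto \PPP_0^\xi(M(t) \geq y)$ and the solution $x \mapsto w^\xi(t,x)$ of \eqref{eqn:ranFKPPintro} are genuinely different objects: this reconciles the unboundedness \eqref{eqn:unbounded_TF} with the claimed tightness, and it also indicates that the travelling-wave argument \eqref{eqn:homogeneous_travelling_wave}--\eqref{eqn:homogeneous_F-KPP_tightness} available in the homogeneous setting has no direct analogue here.

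For the upper tail, I would combine a first-moment (many-to-one) identity with the tilted probability measure of Section~\ref{sec:tilted_measure}. Strict concavity of the Lyapunov exponent $\lambda$ at $v_0>v_c$, guaranteed by Assumption~\ref{ass:VEL}, should yield an estimate of the form
\[
\EE_0^\xi\big[\big|\{\nu \in N(t) : X_t^\nu \geq v_0 t + r\}\big|\big] \leq C(\xi)\, e^{-c r}
\]
for some $c > 0$ and a $\PP$-a.s.~finite $C(\xi)$; combined with the $\PP$-a.s.~asymptotics $m^\xi(t) = v_0 t + O(\log t)$ available from \cite{HoReSo-22,DS2021} and Markov's inequality, this delivers the required uniform upper tail bound. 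For the lower tail, which is the more delicate direction, I would adopt a branching-Markov two-step argument. Fix $s = s(\varepsilon)$ and first show, via a truncated second-moment estimate on the number of ``frontier'' particles lying within $O(1)$ of $m^\xi(t-s)$, that at time $t-s$ there are at least $N = N(\varepsilon)$ such particles with high probability. Then use the branching-Markov property to treat the descendants of each frontier particle over the remaining time $s$ as independent BBMREs in the corresponding shifted environment $\theta_y \xi$; each such subtree produces a further displacement of at least $m^{\theta_y \xi}(s) - O(1)$ with probability bounded below by some $p > 0$, and independence yields a $(1 - p)^N$ bound on the event that none of them attains $M(t) \geq m^\xi(t) - r$.

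The main obstacle will be the approximate-additivity step needed to glue the two stages. To convert the conclusion ``some descendant attains position $m^\xi(t-s) + m^{\theta_y \xi}(s) - O(1)$'' into ``$M(t) \geq m^\xi(t) - r$'', one must establish an inequality
\[
m^\xi(t) \leq m^\xi(t - s) + m^{\theta_y \xi}(s) + O(1)
\]
that holds $\PP$-a.s.~uniformly in $t$ and in frontier positions $y$ that are ``typical'' for the shifted environment. Ergodicity together with the $\psi$-mixing assumption \eqref{eq:MIX} should allow comparison of $m^{\theta_y \xi}(s)$ with a deterministic target for $\PP$-typical $y$; the real difficulty is enforcing this typicality \emph{simultaneously} for all large $t$, in a way robust to the $\PP$-a.s.~environmental fluctuations responsible for \eqref{eqn:unbounded_TF}. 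A careful quantitative bookkeeping of ``bad'' environment windows, presumably via the mixing estimates already used in \cite{CDS2021,DS2021}, would be the crucial analytic input complementing the probabilistic branching-tree argument.
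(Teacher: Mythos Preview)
Your upper-tail step via a naked first moment does not give a bound that is uniform in $t$. Even in homogeneous BBM one has $\EE_0[\#\{\nu:X_t^\nu\ge m(t)\}]\asymp t$, so Markov's inequality yields only $\PPP_0(M(t)\ge m(t)+r)\lesssim t\,e^{-cr}$, forcing $r\sim\log t$. In the random environment the situation is no better: the median sits at the Bramson-type position $v_0 t-\frac{3}{2}\gamma^{-1}\log t+O(1)$, precisely so that the first moment there is polynomially large. To get a uniform upper tail one needs a barrier/ballot-type refinement of the many-to-one, and carrying that out uniformly in the environment is not at all routine; you have not indicated how you would do it.

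The lower-tail step has a more fundamental problem. The approximate additivity $m^\xi(t)\le m^\xi(t-s)+m^{\theta_y\xi}(s)+O(1)$ that you need, uniformly over all large $t$ and all frontier positions $y$, is exactly the kind of uniform environmental control that \emph{fails} in this model---this failure is what produces the unbounded transition fronts \eqref{eqn:unbounded_TF} that you yourself flag. The frontier positions $y\approx m^\xi(t-s)$ sweep through the whole environment as $t$ varies, so ``bad windows'' cannot be avoided by a Borel--Cantelli bookkeeping; they recur infinitely often. Your plan therefore runs into precisely the obstruction that makes the theorem non-trivial.

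The paper takes a completely different route that sidesteps both issues. It argues by contradiction and uses a \emph{Sturmian zero-crossing principle} (Lemma~\ref{lem:sturmian_principle}): the difference $W(t,x)=w^{y_1}(t,x)-w^{y_2}(t+T,x)$ solves a linear parabolic equation whose number of sign changes in $x$ is non-increasing in $t$. Since $W(0,\cdot)$ has a single sign change, the set $\{x:W(t_n,x)>0\}$ is a half-line, so the desired inequality at the origin follows once it is verified at a single point $x_n^*$ far to the \emph{left}, namely $x_n^*=l_n-vt_n$ for some large $v$ (Lemma~\ref{lem:contradiction}). At such points both $w^{y}$ and $w^{y+\Delta}$ are in the large-deviation regime, and the comparison reduces to PAM-type Feynman--Kac estimates (Lemma~\ref{lem:perturbations}) combined with the tilted measures of Section~\ref{sec:tilted_measure}. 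The Sturmian trick is the key new idea; nothing resembling your two-stage branching argument or median additivity is used.
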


\begin{remark}
  Note that Theorem \ref{thm:BBMRE_is_tight} also remains valid if for
  any $\varepsilon \in (0,1)$, the quantity $m^\xi(t)$ is replaced by
  $m_\varepsilon^\xi(t)$.
\end{remark}

This result should be contrasted with the behaviour
\eqref{eqn:unbounded_TF} of transition fronts of solutions to the
inhomogeneous F-KPP equation \eqref{eqn:ranFKPPintro} discussed in the
introduction. More precisely, in \cite[Theorem~2.3, Theorem~2.4]{CDS2021}
environments $\xi$ satisfying
Assumptions~\ref{ass:environment}--\ref{ass:VEL} of the present paper
were constructed for which the width of the transition front grows
logarithmically in time, along a sub-sequence. That is, for small enough
$\varepsilon>0$, there exist times and positions
$(t_n)_n , (x_n)_n \in \Theta(n)$, and a function
$\varphi  \in \Theta( \ln n)$ such that
\begin{equation} \label{eqn:nonMon}
  w^{\xi}(t_n,x_n) \ge w^{\xi}(t_n,x_n+\varphi(n))+\varepsilon.
\end{equation}
(Note, that this not only implies \eqref{eqn:unbounded_TF}, but also the
  spatial non-monotonicity of the functions $w^{\xi}(t,\cdot)$.) The
existence of environments for which Theorem~\ref{thm:BBMRE_is_tight}, and
\eqref{eqn:unbounded_TF} or \eqref{eqn:nonMon} hold simultaneously seems
unintuitive, as it sharply contrasts with the homogeneous case. Indeed,
in the latter, as indicated by \eqref{eqn:homogeneous_stretching} and
\eqref{eqn:homogeneous_F-KPP_tightness}, the standard reasoning for
deducing  the tightness of BBM is by the uniform boundedness in time of
transition fronts for the corresponding homogeneous F-KPP solutions. We
will explain the reason for this apparent discrepancy later in the paper
(see Section~\ref{ssec:strategy}).

Questions of tightness also arise naturally and have been addressed in
many other classes of models. In \cite{BramsonZeitouni2009recursion},
 analytic tools have been  developed in order to establish tightness
for a class of discrete time models whose distribution functions satisfy
certain recursive equations, analogous to the F-KPP equation in the case
of BBM. These tools are powerful and were applied and  adapted to establish
tightness for several models, e.g.\ \cite{AddarioBerry2009minima,
  Bolthausen2011recursions, Dembo2021limitlaw, FangZeitouni2012BRWRE,
  HuShi2009minimal, Neuman2021maximal} to name a few.
For BBM in a periodic environment, \cite{Lubetzky2018MaximumOB} used an
analytic result on the F-KPP front in periodic environment
\cite{Hamel2016periodic} which directly implies tightness.

In the context of the discrete space model of \cite{CD2020},
sub-sequential tightness along a deterministic sequence is shown for the
quenched and annealed law of the  maximally displaced particle in
\cite{Kriechbaum2021subsequential} using a Dekking-Host type argument.
Our method relies crucially on analytic properties of solutions to the
F-KPP equation, and differs from the approaches in the above mentioned
articles.

The tightness result of Theorem~\ref{thm:BBMRE_is_tight} naturally
suggests the question whether the random variables $M(t)-m^\xi(t)$
converge in distribution as $t\to\infty$. Supported by the numerical
simulations presented in Figure~\ref{fig:nonconvergence}, we conjecture
that the answer to this question is negative.

\begin{figure}[ht]
  \centering \includegraphics[width=7cm]{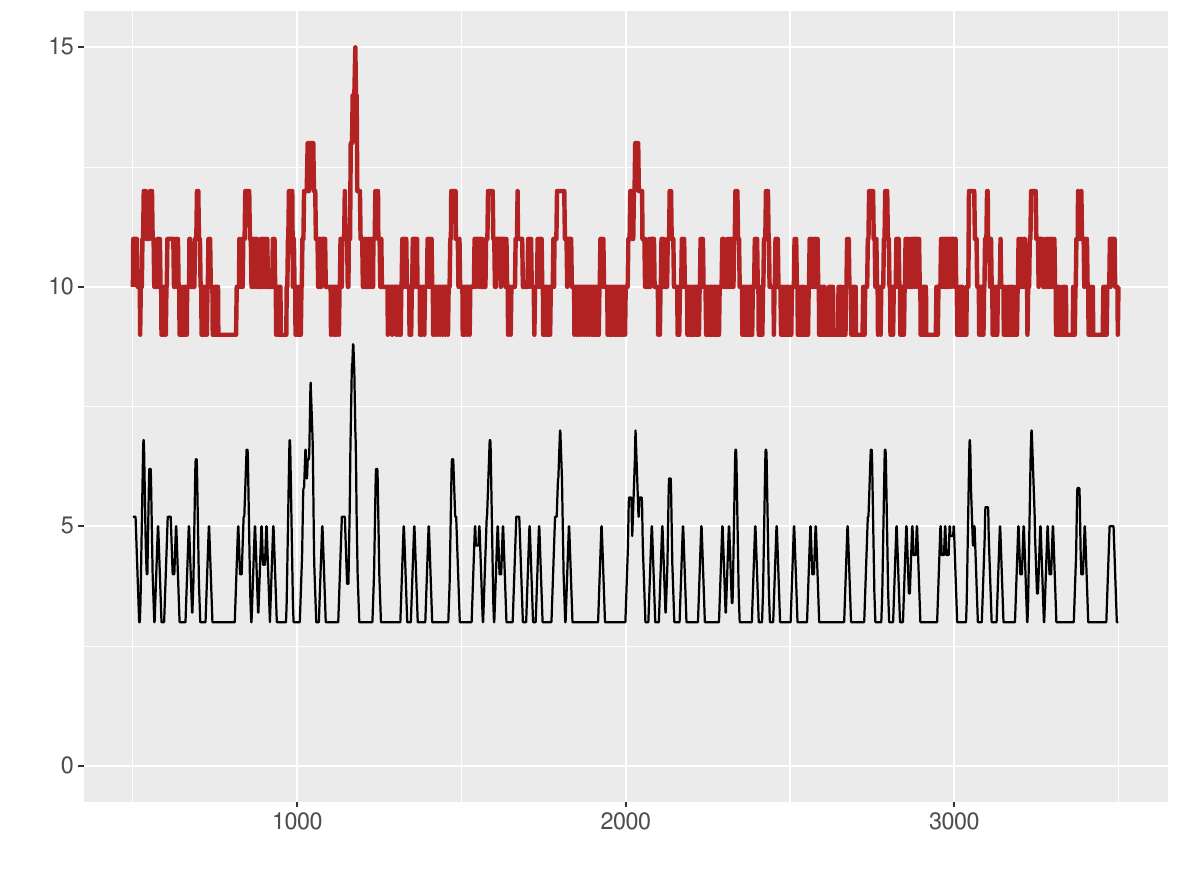}
  \includegraphics[width=7cm]{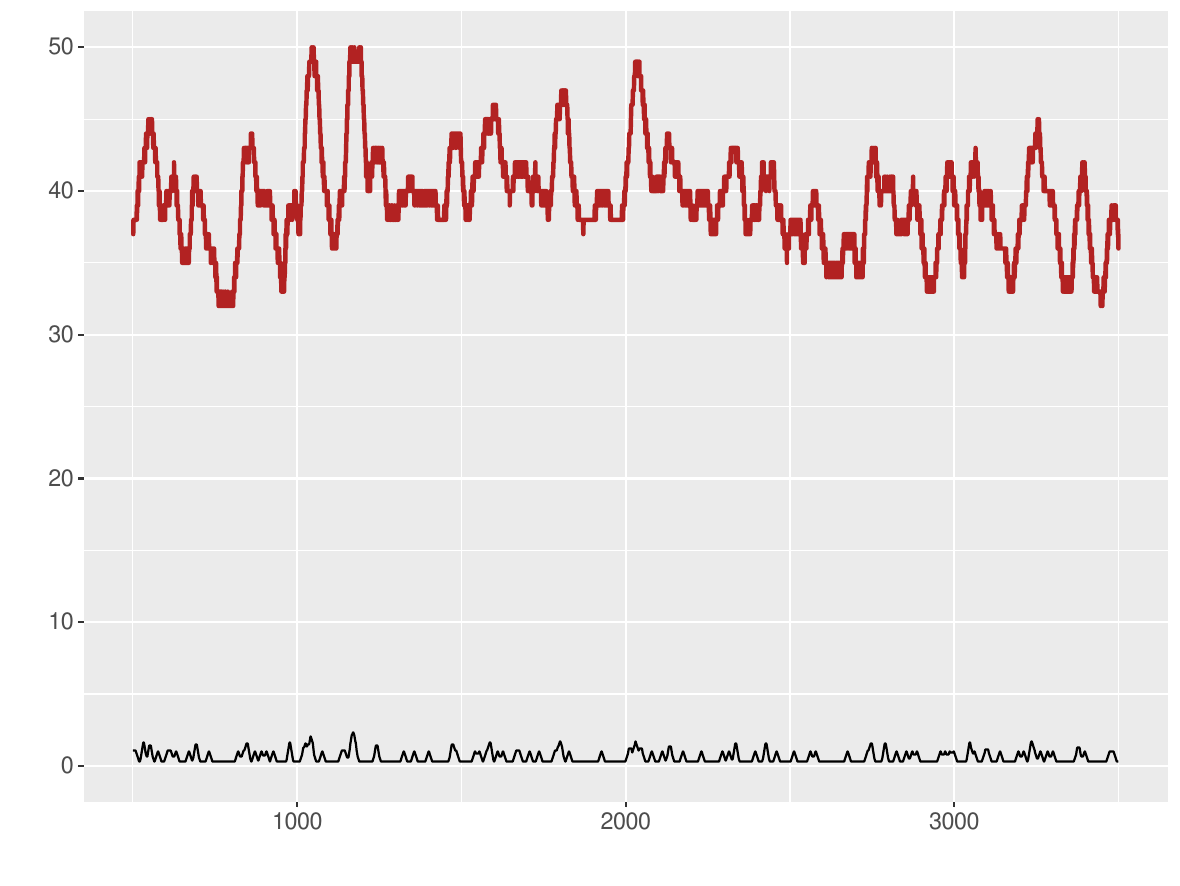}
  \caption{Numerical simulations suggesting that the distributions of
    $M(t)-m^\xi(t)$ do not converge as $t\to\infty$; the red line shows
    the dependence of the ``spread'' of this distribution, that is of
    $m^\xi_{0.99}(t)-m^\xi_{0.01}(t)$, on the median $m^\xi(t)$. The
    black line shows the corresponding potential $\xi(x)$ as a function
    of $x$. The simulations were performed for a discrete-space model,
    for realisations of $\xi$ from  two different distributions (left and
      right panel). In both cases, note the similarity of the red and
    black line, in the sense that at times $t$ when the median $m^\xi(t)$
    reaches an area where $\xi$ is large, the spread of $M(t)$ tends to
    be large as well.}
    \label{fig:nonconvergence}
\end{figure}

\begin{remark}
  As observed above, in  \cite{HoReSo-22}   the authors prove an annealed
  functional central limit theorem for the position of the maximally
  displaced particle $M(t)$ of BBMRE in the setting described here. In
  our notation, this means that for some $\sigma^2 \in (0,\infty)$, the
  sequence of processes
  \begin{equation*}
    [0,\infty) \ni t
    \mapsto \frac{M(nt)-v_0 nt}{\sqrt{\sigma^2n}}, \quad n \in \N,
  \end{equation*}
  under $\PP \times \PPP_x^\xi$ converges weakly in $C([0,\infty))$ to
  standard Brownian motion.

  Using McKean's representation (see Proposition \ref{pro:McKean} below),
  reflecting the potential around the origin by defining
  $\widetilde \xi(-y):= \xi(y)$ for all $y \in \R$, and using its
  stationarity, we obtain  that for $x \in \R$  the solution
  $w^{\widetilde \xi}(t,x)$ to \eqref{eqn:ranFKPPintro} with initial
  condition $\bbone_{(-\infty,0]}$ has the same $\PP$-distribution as
  $\PPP_{0}^{ \xi}(M(t) \ge x)$. As a consequence, the (functional)
  central limit theorem \cite[Corollary 1.6]{DS2022} for the front of
  this solution $w^{\widetilde \xi}(t,x)$ at level $\varepsilon \in (0,1)$
  (note that  $\widetilde \xi$  still satisfies the conditions imposed on
    $\xi$ for \cite[Corollary 1.6]{DS2022} to hold) entails a
  (non-functional) central limit theorem for $m_\varepsilon^\xi(t)$ as
  defined in \eqref{eqn:meps} as well; that is, the sequence of random
  variables $(m^\xi(nt)-v_0nt)/{\sqrt{\sigma^2n}}$, $n \in \N$, converges
  weakly to a $\mathcal N(0, \sigma^2)$-distributed random variable. In
  combination with Theorem \ref{thm:BBMRE_is_tight}, we recover a
  non-functional form of the above central limit theorem for $M(t)$ as
  well. That is, the sequence of random variables
  $(M(nt)-v_0nt)/{\sqrt{\sigma^2n}}$, $n \in \N$, converges weakly to a
  $\mathcal N(0, \sigma^2)$-distributed random variable under
  $\PP \times \PPP_x^\xi$.
\end{remark}

\subsection{Strategy of the proof}
\label{ssec:strategy}

We now explain the main ideas behind the proof of
Theorem~\ref{thm:BBMRE_is_tight}, and also comment on the seeming
discrepancy between this theorem and properties \eqref{eqn:unbounded_TF},
\eqref{eqn:nonMon} of the solutions to the randomized F-KPP equation.

The first ingredient of the proof is the well-known duality
between the distribution of $M(t)$ and the solutions to the randomised
F-KPP equation \eqref{eqn:ranFKPPintro}. In the spatially non-homogeneous
case this duality states (see Section~\ref{ssec:PAM} below)
\begin{equation}
  \label{eqn:dualityintro}
  w^y(t,x) = \PPP_x^{\xi}(M(t) \ge y),
\end{equation}
where $w^y$ is the solution to \eqref{eqn:ranFKPPintro} with the initial
condition $w^y(0,\cdot) = \bbone_{[y,\infty)}$. In order to prove the
tightness of $(M(t)-m^\xi(t))_{t\ge 0}$  it suffices to check that the
difference of the $\varepsilon$-quantile and $(1-\varepsilon)$-quantile
of $M(t)$ is bounded uniformly in time. More precisely, for every
$\varepsilon >0$ we need to find $\Delta = \Delta(\varepsilon )<\infty$ so that
for all $t>0$ and $x_t = x_t(\varepsilon )\in \mathbb R$ characterized via
\begin{equation}
  \label{eqn:tightintroaa}
  \varepsilon  = \PPP_0^\xi(M(t) \ge x_t) =  w^{x_t}(t,0)
\end{equation}
it holds that
\begin{equation}
  \label{eqn:tightintrobb}
  1-\varepsilon
  < \PPP_0^\xi\big(M(t) \ge x_t-\Delta\big)
  =  w^{x_t-\Delta}(t,0).
\end{equation}

We note here that this already provides an indication that the
above mentioned discrepancy is only apparent: While
properties~\eqref{eqn:unbounded_TF} and \eqref{eqn:nonMon} are linked to
the dependency of $w^y(t,x)$ on the spatial variable $x$, the tightness
of $M(t)$ is linked to its dependency on the initial
condition~$\bbone_{[y,\infty)}$.

To show that \eqref{eqn:tightintrobb} holds true, we first exploit the fact
that solutions to \eqref{eqn:ranFKPPintro} increase quickly to~$1$,
once they move away from~$0$. In connection with \eqref{eqn:tightintroaa}
this fact implies (cf. Corollary~\ref{cor:almost_surely_finite_time} below)
that for some $T = T(\varepsilon )<\infty$, uniformly in $\xi$ and $t$
large,  we  have
\begin{equation}
  \label{eqn:tightintroca}
  1-\varepsilon< w^{x_t}(t+T,0) =: \overline w(t,0).
\end{equation}
Note that $\overline w$ is the solution of \eqref{eqn:ranFKPPintro} with
the initial condition $w^{x_t}(T, \cdot)$. In view of this,
\eqref{eqn:tightintrobb} follows, if we can show that for some $\Delta$
sufficiently large, we have uniformly in $t$ large that
\begin{equation}
  \label{eqn:tightintrocc}
  w^{x_t-\Delta}(t,0)>\overline w(t,0).
\end{equation}

Proving inequality \eqref{eqn:tightintrocc} directly at the spatial
coordinate $x=0$ seems to be difficult. It requires comparing two
solutions to the randomised F-KPP equation \eqref{eqn:ranFKPPintro} in
the regime where they are away from~$0$ and~$1$, and where various
approximations to them, e.g. by linearisation, are not precise enough. To
work around this difficulty, we take advantage of the \emph{Sturmian
  principle} for solutions of parabolic PDEs which we recall in
Section~\ref{ssec:sturmian_principle}. As we will see in the proof of
Theorem~\ref{thm:BBMRE_is_tight}, this principle implies that
\eqref{eqn:tightintrocc} follows if, for some $v>0$, we can show the
modified inequality
\begin{equation}
  \label{eqn:tightintrodd}
  w^{x_t-\Delta}(t,-vt)>\overline w(t,-vt).
\end{equation}
The advantage of inequality \eqref{eqn:tightintrodd} is that if $v$ is
sufficiently away from 0, then both its sides are very close to $0$ and
thus can be controlled using linearisation techniques or the first order
Feynman-Kac formulas. The proof of \eqref{eqn:tightintrodd} is still
rather technical and is provided in the Lemma~\ref{lem:contradiction}
below.

Finally, we return to the discrepancy between the divergence of the width
of the transition front \eqref{eqn:unbounded_TF} and the tightness proved
in Theorem~\ref{thm:BBMRE_is_tight}. The proof of
\eqref{eqn:unbounded_TF} in~\cite{CDS2021} exploits the fact that it is
easy to construct potentials where, for $n$ large, in any interval $[0,n]$
there is a subinterval of length $\Theta(\log n)$ where the potential is
close to $\ei$, closely followed by a subinterval of the same length
where the potential almost equals $\es$. If $\es/\ei > 2$, then the
existence of such subintervals forces a creation of ``bumps'' in the
solutions to the randomised F-KPP equation, as illustrated in
Figure~\ref{fig:bump}. The creation of such bumps directly leads to
\eqref{eqn:unbounded_TF}. It turns out that the existence of those
subintervals does not make inequality \eqref{eqn:tightintrodd} invalid,
even, e.g., if $x_t$ is located in a subinterval where the potential is
close to $\es$ and $x_t-\Delta$ in a subinterval where $\xi$ is almost
$\ei$. Via the duality \eqref{eqn:dualityintro}, this is related to the
established intuition that the behaviour of the maximum of branching
processes is more easily influenced by the environment close to
their starting point than by that elsewhere.

\begin{figure}[ht]
  \centering
  \includegraphics{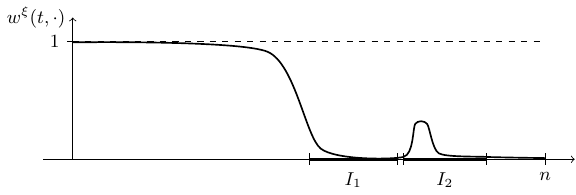}

  \caption{A bump in the solution to \eqref{eqn:ranFKPPintro} which develops
    shortly after the front of the solution (moving to the
      right) reaches the interval $I_1$
    where $\xi$ is close to $\ei$, and which is followed by the interval
    $I_2$
    with $\xi$ close to $\es$,  if $\es/\ei > 2$. Both intervals are of
    length $\Theta(\log n)$.
  }
  \label{fig:bump}
\end{figure}

\subsubsection*{Organisation of the article}

The rest of the paper is organised as follows. In Section
\ref{sec:preliminaries} we first recall some well-established facts, such
as the duality \eqref{eqn:dualityintro}, the Feynman-Kac formulas for the
solution of the randomised F-KPP equation and of its linearisation, the
parabolic Anderson model. We then discuss a first application of the
Sturmian principle to our setting. Section \ref{sec:tilted_measure}
reviews tilted measures, which, on a technical level, will play the role
of a suitable ``gauging-measure'' under which we can compare the terms in
\eqref{eqn:tightintrodd}. In
Section~\ref{sec:Perturbation_results_for_the_PAM} we explain how to
extend the arguments in \cite{CDS2021,DS2022} in order to obtain a
spatial and temporal perturbation result for solutions of the parabolic
Anderson model. This perturbation result is then applied in
Section~\ref{sec:proofof_contradiction_lemma} where we prove the key
technical lemma, which is related to inequality \eqref{eqn:tightintrodd}.
Finally, Section~\ref{sec:proofof_tightness} contains the proof of the
main theorem.

\subsubsection*{Notational conventions}

We use $c$, $C$, $c'$ etc.~to denote positive finite constants whose
value may change during computations. Indexed versions $c_1$, $c_2$, $C_1$,
etc.~are used to denote positive finite constants that are fixed
throughout the article. Their dependence on other parameters is made
explicit when the constants are introduced, with an additional convention
given in Section~\ref{sec:Perturbation_results_for_the_PAM}.

\section{Preliminaries}
\label{sec:preliminaries}

This section recalls two important and well known probabilistic tools
which feature heavily in the proof of our main theorem. Furthermore, we
make precise the Sturmian principle alluded to above.

\subsection{The randomised F-KPP equation and its linearisation}
\label{ssec:PAM}

As already mentioned in the introduction, there is a fundamental link
between branching Brownian motion  and solutions to the homogeneous F-KPP
equation. It is often attributed to McKean \cite{mckean1975KPP}, but can
already be found in Skorokhod \cite{Skorohod1964branching} and Ikeda,
Nagasawa and Watanabe \cite{Ikeda1968branching}. Such a connection can
also be extended to the setting of random branching rates, as we now
describe. For this purpose, assume we are given an offspring distribution
$(p_k)_{k\ge 1}$ as in \eqref{ass:offspring_distribution}. We then
consider the random semilinear heat equation
\begin{equation*}
  \label{eqn:FKPP}
  \begin{aligned}
    \partial_t w(t,x)&= \frac{1}{2}\partial_x^2 w(t,x) + \xi(x)F(w(t,x)),
    \qquad&&t>0,x\in \R,\\
    w(0,x) &=w_0(x), &&x\in \R,
  \end{aligned}
  \tag{F-KPP}
\end{equation*}
where the non-linearity $F: [0,1]\to [0,1]$ is given by
\begin{equation}
  \label{eqn:non-linearity}
  F(w) = (1-w) - \sum_{k=1}^{\infty}p_k(1-w)^k, \qquad w \in [0,1].
\end{equation}
The adaptation of McKean's representation of solutions to
\eqref{eqn:FKPP} takes the following form.

\begin{proposition}
  \label{pro:McKean}
  For any function $w_0 : \R \to [0,1]$ which is the pointwise limit of
  an increasing sequence of continuous functions, and for any bounded,
  locally Hölder continuous function $\xi : \R \to (0,\infty) $, there
  exists a solution to \eqref{eqn:FKPP}  which is continuous on
  $(0,\infty)\times \R$ and which, for $t \in [0,\infty)$ and $x \in \R$,
  can be represented as
  \begin{equation}
    \label{eqn:McKean}
    w(t,x)
    = 1- \EE_x^{\xi}\Big[ \prod_{\nu \in N(t)}\big(1-w_0(X_t^{\nu})\big)\Big].
  \end{equation}
\end{proposition}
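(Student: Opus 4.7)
The plan is to define $w(t,x)$ directly by the right-hand side of \eqref{eqn:McKean} and to verify that it solves \eqref{eqn:FKPP} via a first-step decomposition of the BBMRE. Throughout, I set $u(t,x) := 1 - w(t,x)$ and denote by $\mathbf{E}_x$ the expectation under the Wiener measure of a Brownian motion $(X_s)_{s\ge 0}$ started at $x$. I will first handle the case in which $w_0$ is continuous; the general case then follows by writing $w_0$ as the pointwise increasing limit of continuous functions $w_0^{(n)}$ and applying monotone convergence, since $\prod_{\nu\in N(t)}(1-w_0^{(n)}(X_t^\nu))$ is a decreasing sequence bounded in $[0,1]$. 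A parabolic compactness argument applied to the (uniformly bounded) solutions $w^{(n)}$ transfers the convergence to the PDE level.

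The core step is a renewal identity. Conditionally on the Brownian trajectory $(X_s)_{s\ge0}$, the first branching time $\tau$ of the initial particle has hazard rate $\xi(X_s)$, so that $\mathbf{P}(\tau > s \mid X) = \exp\big(-\int_0^s \xi(X_r)\,dr\big)$. Decomposing according to $\{\tau > t\}$ and $\{\tau \le t\}$, and using the branching property on $\{\tau \le t\}$ to restart independent BBMREs from $X_\tau$ (with the number of descendants distributed as $(p_k)$), one obtains
\begin{equation*}
    u(t,x) = \mathbf{E}_x\bigg[e^{-\int_0^t \xi(X_s)\,ds}\big(1-w_0(X_t)\big)
    + \int_0^t \xi(X_s)\, e^{-\int_0^s \xi(X_r)\,dr} \sum_{k \ge 1} p_k\, u(t-s, X_s)^k\, ds\bigg].
\end{equation*}
Since $\sum_{k\ge 1} p_k u^k = u - F(1-u)$ by \eqref{eqn:non-linearity}, this is exactly the Duhamel mild form of $\partial_t u = \tfrac12 \partial_x^2 u - \xi\, F(1-u)$, equivalently $\partial_t w = \tfrac12 \partial_x^2 w + \xi\, F(w)$, with initial datum $w_0$.

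It remains to upgrade this mild solution to a classical one and to establish continuity on $(0,\infty)\times \R$. Because $\xi$ is bounded and locally Hölder continuous and $u$ takes values in $[0,1]$, the heat-kernel representation underlying the above Duhamel formula is smoothing in both variables, and an iteration bootstraps continuity of $u$ from the integral equation. Once $u$ is continuous, the right-hand side of the PDE has locally Hölder continuous coefficients and nonlinearity, so standard interior parabolic Schauder theory yields $u \in C^{1,2}((0,\infty)\times \R)$ and that $u$ solves the PDE classically. The main technical obstacle is the coupled interplay of the diffusive dynamics, the state-dependent branching rate $\xi$, and the polynomial nonlinearity, but the first-step decomposition neatly isolates these contributions: the pure-diffusion term $e^{-\int_0^t \xi(X_s)\,ds}(1-w_0(X_t))$ carries the initial datum, while the Duhamel integral encodes the nonlinear branching feedback through the Feynman–Kac weight, so that the existing parabolic regularity theory can be applied without further probabilistic input.
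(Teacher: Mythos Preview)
Your argument is correct and follows the classical McKean route: a first-branching decomposition of the BBMRE yields the Duhamel (mild) formulation of \eqref{eqn:FKPP}, and parabolic regularity then upgrades the mild solution to a classical one; the extension from continuous $w_0$ to increasing limits of continuous functions via monotone convergence and interior compactness is also standard and correctly outlined. The paper itself does not give a proof of this proposition at all: it simply cites \cite[Proposition~2.1]{DS2021} and remarks that the argument there transfers verbatim, so your write-up is in fact more detailed than what the paper provides.

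One point worth tightening in your sketch: when you invoke interior Schauder theory you need the source term $\xi\,F(1-u)$ to be locally H\"older, not merely continuous. The usual fix is to first use the smoothing of the Feynman--Kac/heat semigroup on bounded measurable data to get $u\in C^{\alpha}_{\mathrm{loc}}$ for some $\alpha>0$, which makes $\xi\,F(1-u)$ H\"older (since $\xi$ is H\"older and $F$ is smooth on $[0,1]$), and only then apply Schauder to reach $C^{1,2}$. This is routine, but the phrase ``an iteration bootstraps continuity'' as written skips over the H\"older step.
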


A proof of this proposition can be found e.g.~in
\cite[Proposition~2.1]{DS2022}; the formulation in that source is under
slightly more restrictive conditions, but it transfers verbatim to the
assumptions we impose above.

A crucial consequence of Proposition \ref{pro:McKean} is that
the solution $w^y$ of \eqref{eqn:FKPP} with Heaviside-like initial
condition $w^y_0 = \bbone_{[y,\infty)}$, for $y \in \R$, is linked to the
distribution function of $M(t)$ via the identity
\begin{equation}
  \label{eqn:FKPP_solution}
  w^y(t,x) = \PPP_x^{\xi}(M(t) \ge y).
\end{equation}

\begin{remark}
  \label{rem:muis2}
  It is common practice in the F-KPP literature to normalise the
  non-linearity $F$ in such a way that its derivative at the origin is
  one. Using \eqref{ass:offspring_distribution} it is easy to
  check that in our case, $F'(0) = \mu-1$. In other words, the standard
  normalisation of equation \eqref{eqn:FKPP} corresponds to a branching
  processes for which the offspring distribution has mean $\mu = 2$, as
  is also assumed in \cite{DS2022}. In
  \eqref{ass:offspring_distribution},
  we assume only that $\mu>1$ and do not a priori work
  under the usual F-KPP normalisation. Nevertheless, given any such
  offspring distribution $(p_k)_{k\ge 1}$ with mean $\mu \neq 2$ and a
  corresponding BBMRE in environment $\xi$, one can always transform it
  into another BBMRE in a rescaled environment, so that the transformed
  process is in the usual normalisation and has the same distribution as
  the original process. Indeed, the transformation
  defined by
  \begin{equation*}
    \xi \to (\mu-1)\xi, \quad p_1 \to \frac{\mu+p_1-2}{\mu-1},
    \quad\text{and} \quad
    p_k \to \frac{p_k}{\mu-1} \text{ for } k\ge2,
  \end{equation*}
  yields a new offspring distribution with mean two. Moreover, rescaling
  the environment guarantees that \eqref{eqn:FKPP}, and the law
  $\PPP_x^{\xi}$ are invariant under this transformation. After
  rescaling, it holds that $F'(0) =1$ and $\mu_2>2$; hence, in light of
  this reasoning,  we will from now on always assume that
  \begin{equation}
    \label{eqn:ODprime}
     \mu = 2, \qquad F'(0)=1, \quad\text{and}\quad \mu_2>2.
  \end{equation}
 Observe also that by \eqref{eqn:non-linearity} this implies that
  \begin{equation}
    \label{eqn:Fderivatives}
    F'(w)\le 1, \quad\text{and}\quad F''(w)\ge -\mu_2+2 \qquad \text{for
      all } w\in [0,1].
  \end{equation}
\end{remark}

Another PDE related to BBMRE, which we make use of later on, is the
linearisation of \eqref{eqn:FKPP}, known as the \emph{parabolic Anderson
  model} (PAM),
\begin{equation*}
  \tag{PAM}
  \label{eqn:PAM}
  \begin{aligned}
    \partial_t u(t,x)&= \frac{1}{2}\partial_x^2 u(t,x) +\xi(x)u(t,x),
    \qquad &&t>0,x\in \mathbb R\\
    u(0,x) &= u_0(x), && x \in \R.
  \end{aligned}
\end{equation*}
The PAM has been the subject of intense investigation in its own right,
see e.g.~\cite{Koenig2016PAM} and reference therein for a comprehensive
overview; our main interest, however, lies in space and time perturbation
results that have been developed for its solution in
\cite{CDS2021,DS2022}. These will be considered in more detail in
Section~\ref{sec:Perturbation_results_for_the_PAM}.

An important strategy for probabilistically investigating the solutions
to the equations \eqref{eqn:FKPP} and \eqref{eqn:PAM} is via analysing
their Feynman-Kac representations. In what comes below we denote by $P_x$
the probability measure under which the process denoted by
$(X_t)_{t \ge0}$ is a standard Brownian motion started at $x\in \mathbb R$.
The corresponding expectation operator is denoted by $E_x$. We also make
repeated use of the abbreviation $E_x[f;A]$ for $E_x[f\bbone_A]$.

\begin{proposition}
  \label{pro:Feynman-Kac}
  Under Assumptions \ref{ass:environment} and
  \ref{ass:OD}, the unique non-negative solution $u$ of
  \eqref{eqn:PAM} is given by
  \begin{equation}
    \label{eqn:Feynman-Kac_PAM}
    u(t,x) = E_x\Big[\exp\Big\{\int_0^t \xi(X_r)\D r \Big\}\, u_0(X_t) \Big],
    \qquad t\ge 0, x\in \mathbb R,
  \end{equation}
  and the unique non-negative solution $w$ of  \eqref{eqn:FKPP} fulfils
  \begin{equation}
    \label{eqn:Feynman-Kac_FKPP}
    w(t,x) = E_x \Big[ \exp\Big\{\int_0^t\xi(X_r)
        \widetilde{F}(w(t-r,X_r))\D r\Big\} \, w_0(X_t)\Big],
    \qquad t\ge 0, x\in \mathbb R,
  \end{equation}
  where $\widetilde{F}(w) = F(w)/w$ for $w \in (0,1]$, which can be
  continuously extended to
  $\widetilde{F}(0) = \lim_{w \to 0+}\widetilde{F}(w)
  = \sup_{w \in (0,1]}\widetilde{F}(w) =1$.
\end{proposition}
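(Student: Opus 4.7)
The strategy is to derive both identities from the classical Itô--Feynman--Kac ansatz: apply Itô's formula to an appropriate exponential functional of a Brownian trajectory, observe that the PDE makes the finite-variation part vanish, obtain a local martingale, upgrade it to a true martingale, and then take expectations. Existence of a continuous non-negative solution to \eqref{eqn:FKPP} is already supplied by Proposition~\ref{pro:McKean}, and for \eqref{eqn:PAM} it is covered by standard linear parabolic theory given the Hölder regularity of $\xi$ from \eqref{eqn:Hoelder_condition}. Hence the plan is to verify the representation formulas and uniqueness.

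For \eqref{eqn:Feynman-Kac_PAM}, fix $T>0$ and $x\in\R$, let $u$ be any non-negative solution of \eqref{eqn:PAM}, and apply Itô's formula under $P_x$ to
\[
M_s := u(T-s,X_s)\exp\Big\{\int_0^s\xi(X_r)\D r\Big\},\qquad s\in[0,T].
\]
A direct computation shows that the finite-variation part of $\D M_s$ is $\exp\{\int_0^s\xi(X_r)\D r\}\big(-\partial_t u+\tfrac12\partial_x^2 u+\xi u\big)(T-s,X_s)\,\D s$, which vanishes by \eqref{eqn:PAM}, so that $M$ is a continuous local $P_x$-martingale. Since $\xi\le\es$ and $u$ is non-negative, localisation followed by dominated convergence (using an a priori exponential growth bound on $u$ in terms of $\es$ and $\|u_0\|_\infty$) upgrades $M$ to a true martingale on $[0,T]$, and the equality $E_x[M_0]=E_x[M_T]$ yields \eqref{eqn:Feynman-Kac_PAM} at $t=T$.

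For \eqref{eqn:Feynman-Kac_FKPP}, I would reinterpret \eqref{eqn:FKPP} as the linear equation $\partial_t w=\tfrac12\partial_x^2 w+V_w(t,x)\,w$ with the self-consistent potential $V_w(t,x):=\xi(x)\widetilde F(w(t,x))$, which is legitimate because $\widetilde F(w)\,w=F(w)$ on $(0,1]$ by definition. The same Itô computation applied to
\[
\widetilde M_s := w(t-s,X_s)\exp\Big\{\int_0^s\xi(X_r)\widetilde F(w(t-r,X_r))\D r\Big\},\qquad s\in[0,t],
\]
turns $\widetilde M$ into a continuous local martingale. Here the upgrade to a true martingale is automatic because $w\in[0,1]$ and $\widetilde F\in[0,1]$, so $|\widetilde M_s|\le e^{\es t}$. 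The asserted continuous extension $\widetilde F(0)=1=\sup_{w\in(0,1]}\widetilde F(w)$ follows from \eqref{eqn:non-linearity}: one computes $F(0)=0$ and, using \eqref{eqn:ODprime}, $F'(0)=1$, while $F''(w)=-\sum_{k\ge 2}k(k-1)p_k(1-w)^{k-2}\le 0$ shows that $F$ is concave on $[0,1]$, whence $F(w)\le F(0)+F'(0)w=w$ and thus $\widetilde F\le 1$ with equality in the limit $w\to 0+$. Evaluating $E_x[\widetilde M_0]=E_x[\widetilde M_t]$ then produces \eqref{eqn:Feynman-Kac_FKPP}.

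Uniqueness in both cases reduces to a maximum-principle and Gronwall argument: for \eqref{eqn:PAM} the difference of two non-negative solutions satisfies a linear heat equation with bounded potential $\xi$, and for \eqref{eqn:FKPP} the Lipschitz continuity of $F$ on $[0,1]$ together with the a priori bound $w\in[0,1]$ gives the same conclusion via the mean value theorem. The only genuine technical subtlety, and the place where I expect the main obstacle to lie, is the upgrade of the local martingale $M$ to a true martingale in the \eqref{eqn:PAM} step; this requires an a priori growth bound on $u$ \emph{before} the Feynman-Kac formula is at one's disposal, and one typically closes this loop by first treating bounded initial data by truncation and comparison against a Feynman-Kac-type super-solution, then passing to the limit. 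The remaining steps are essentially bookkeeping.
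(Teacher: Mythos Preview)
The paper does not actually supply a proof of this proposition; immediately after its statement it refers the reader to \cite[(1.32), (1.33)]{Bramson1983FKPP}. Your It\^o--Feynman--Kac derivation is exactly the standard argument underlying those references and is correct as written, including the identification of the one genuine technicality (upgrading the PAM local martingale to a true martingale via an a~priori growth bound). The computation $F(0)=0$, $F'(0)=1$, $F''\le 0$ giving $\widetilde F\le 1$ with equality at $0$ is also fine under the normalisation \eqref{eqn:ODprime}, which the paper has adopted by this point. One small remark: for the It\^o step you implicitly need $w\in C^{1,2}$ on $(0,\infty)\times\R$, not merely continuity; this follows from standard interior regularity for semilinear parabolic equations with H\"older coefficients (or from the arguments behind Proposition~\ref{pro:McKean}), but it is worth stating since Proposition~\ref{pro:McKean} as phrased only asserts continuity.
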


See e.g.~\cite[(1.32), (1.33)]{Bramson1983FKPP} for references to the
former. Note that the Feynman-Kac representation
\eqref{eqn:Feynman-Kac_PAM} is explicit, while
\eqref{eqn:Feynman-Kac_FKPP} is not (in the sense that the expressions on
  both sides of the latter equation involve $w$).

Taking advantage of the above, a (direct) link between the PAM and BBMRE
can be derived by combining the Feynman-Kac representation
\eqref{eqn:Feynman-Kac_PAM} of the solution to \eqref{eqn:PAM} with a
\emph{many-to-one formula}, see e.g.~\cite[Proposition 2.3]{DS2022}, in
order to arrive at the representation
\begin{equation*}
  u(t,x) = \EE_x^{\xi}\Big[ \sum\limits_{\nu \in N(t)}u_0(X_t^{\nu})\Big]
\end{equation*}
of solutions to \eqref{eqn:PAM}.

\subsection{Sturmian principle}
\label{ssec:sturmian_principle}

In this section, we present the analytic ingredient of our proof of
Theorem~\ref{thm:BBMRE_is_tight}.  As explained in the introduction (see
  around~\eqref{eqn:tightintrocc}), we are interested in differences of
the type $W(\cdot,\cdot) = w^{y_1}(\cdot,\cdot)- w^{y_2}(\cdot+T,\cdot)$
for some $T>0$, and $y_2>y_1$, where we recall that for any $y\in \R$, we
denote by $w^y$ the solution of \eqref{eqn:FKPP} with initial condition
$w_0= \bbone_{[y,\infty)}$. It is immediate that  the function $W$
satisfies the linear parabolic equation
\begin{align}
  \label{eqn:linear_parabolic_equation}
  \begin{aligned}
    \partial_t W(t,x) &= \frac{1}{2} \partial_x^2W(t,x)+ G(t,x)W(t,x),
    \qquad  &&t>0,x\in \mathbb R,\\
    W(0,x) &=\bbone_{[y_1,\infty)}(x)-w^{y_2}(T,x), \quad  &&x \in \R,
  \end{aligned}
\end{align}
where $G$ is the bounded measurable function defined by (using the
  convention $F'(0)=1$, cf.~Remark~\ref{rem:muis2})
\begin{equation}
  \label{eqn:linear_parabolic_equation_G}
  G(t,x) =
  \begin{cases}
    \xi(x) \,
    \frac{F(w^{y_1}(t,x)) - F(w^{y_2}(t+T,x))}{w^{y_1}(t,x)-w^{y_2}(t+T,x)} ,
    & \text{if }w^{y_1}(t,x) \neq w^{y_2}(t+T,x),\\
    \xi(x) , &\text{if } w^{y_1}(t,x) = w^{y_2}(t+T,x).
  \end{cases}
\end{equation}

Let us state the following simple observation, which will be used at
various stages in the following: By Proposition \ref{pro:McKean} it
follows that
\begin{equation} \label{eqn:sol01bd}
  0< w^{y_2}(T,x)< 1 \quad  \text{ for all } T>0 \text{ and }x \in \R.
\end{equation}
As a consequence, the
initial condition of \eqref{eqn:linear_parabolic_equation} has
exactly one zero-crossing, and it is located at $y_1$.

In the analysis literature, it has been known for a long time that the
cardinality of the set of zero-crossings of solutions to linear parabolic
equations is monotonically non-increasing in time, with the earliest
reference dating back to at least an article by Charles Sturm in 1836,
cf.~\cite{Sturm1836memoires}. Nevertheless, despite this result having
been known for almost two centuries by now, it was not until the eighties
of the last century that Sturm's ideas really revived in the theory of
linear and non-linear parabolic equations, see,
e.g.,~\cite{Angenent1988zeroset, angenent1991nodalproperties,
  Ducrot2014propagatingterrace, evanswilliams1999zerocrossings,
  Nadin2012criticalwaves} for a non-exhaustive list. In this list, the
ideas in \cite{evanswilliams1999zerocrossings} stand out, as they involve
a simple and purely probabilistic proof by interpreting the linear
parabolic partial differential equations as generators of Markov
processes and reducing the study of the zero-crossings to the study of
Markovian transition operators acting on signed measure spaces. A more
complete history and a detailed discussion of the Sturmian principle and
its applications can be found in \cite{Galaktionov2004sturmian}.
In this context, it is interesting to note that already in their
seminal article on the F-KPP equation, Kolmogorov, Petrovskii and
Piskunov also make use of a Sturmian principle for equations of the form
\eqref{eqn:linear_parabolic_equation}, see
\cite[Theorem~11]{KPP1937etude}, which is proved using a parabolic
maximum principle.

We include a version of such a result which is formulated to fit our
purpose; a more general version of this result can be found in
\cite{Nadin2012criticalwaves}. Note that the assumptions in particular
fit the setting of a single zero-crossing in the initial datum.

\begin{lemma}[{{\cite[Proposition 7.1]{Nadin2012criticalwaves}}}]
  \label{lem:sturmian_principle}
  For any $t_0\in \R$, let $G  \in L^{\infty}((t_0,\infty)\times \R)$ and
  assume
  $W \in C((t_0,\infty)\times \R)\cap L^{\infty}((t_0,\infty)\times \R)$
  to be a weak solution of
  \begin{align*}
    \partial_t W(t,x) &= \frac{1}{2} \partial_x^2W(t,x)+ G(t,x)W(t,x),
    \quad  &&t>t_0,x\in \mathbb R,\\
    W(t_0,x) &= W_{t_0}(x),\quad &&x \in \R,
  \end{align*}
  where $W_{t_0}\not\equiv 0$ is piecewise continuous and bounded in $\R$,
  such that for some $z_{t_0} \in \R$ one has
  \begin{equation*}
    W_{t_0}(x)\le 0,\text{ if $x<z_{t_0}$,} \qquad \text{and} \qquad
    W_{t_0}(x)\ge 0,\text{ if $x>z_{t_0}$.}
  \end{equation*}
  Then, for all $t>t_0$ there exists a unique point
  $z(t) \in [-\infty,\infty]$ such that
  \begin{equation*}
    W(t,x)<0,\text{ if  $x<z(t)$,}
    \qquad\text{and}\qquad
    W(t,x)> 0,\text{ if $x>z(t)$}.
  \end{equation*}
\end{lemma}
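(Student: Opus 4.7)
The plan is to combine the classical Sturmian zero-number monotonicity principle with the strong parabolic maximum principle. Defining the zero-number
\begin{equation*}
  Z(t) := \#\{ x \in \R : W(t, \cdot) \text{ changes sign at } x\},
\end{equation*}
the key assertion I would establish is that $Z(\cdot)$ is non-increasing on $(t_0, \infty)$. Together with $Z(t_0) = 1$ (from the assumption on $W_{t_0}$), this forces $Z(t) \in \{0, 1\}$ for every $t > t_0$.

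For the monotonicity I would appeal to \cite[Proposition 7.1]{Nadin2012criticalwaves}, which treats exactly the low-regularity setting needed here (bounded measurable $G$ and bounded weak solutions $W$). The underlying idea, going back to Sturm--Matano--Angenent, is that if $W(t_1, \cdot)$ had two sign changes at $a < b$ for some $t_1 > t_0$, one could produce a parabolic rectangle $(s_1, s_2) \times (x_1, x_2) \subset (t_0, t_1) \times \R$ on whose parabolic boundary $W$ is of one strict sign while containing an interior point of the opposite sign, contradicting the strong parabolic maximum principle. For smooth coefficients this is elementary; the extension to $L^{\infty}$ coefficients and weak solutions is the main technical content of Nadin's argument.

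With $Z(t) \le 1$ in hand, I fix $t > t_0$ and split into cases. If $Z(t) = 0$, then $W(t, \cdot)$ has a fixed sign; the strong parabolic maximum principle applied to $W$ or $-W$ on parabolic rectangles then yields $W(t, \cdot) > 0$ or $W(t, \cdot) < 0$ on all of $\R$, and I set $z(t) := -\infty$ or $z(t) := +\infty$ accordingly. If $Z(t) = 1$, there is a unique sign change $z(t) \in \R$; to verify that the orientation agrees with that of $W_{t_0}$, I would argue by continuity of $W$ in $(t,x)$ combined with the zero-number bound: an orientation flip along $(t_0, t]$ would force the existence of some $t_\ast \in (t_0, t]$ at which $W(t_\ast, \cdot)$ vanishes on a nondegenerate interval, contradicting unique continuation for parabolic equations (again available via the strong maximum principle on nodal domains). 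Strictness of the inequalities $W(t, x) < 0$ on $(-\infty, z(t))$ and $W(t, x) > 0$ on $(z(t), \infty)$ follows from a further application of the strong maximum principle on each of the two nodal components.

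The main obstacle is the zero-number monotonicity itself in the present low-regularity setting, which is delicate because the classical smooth Sturm--Matano arguments do not apply verbatim to weak solutions with merely $L^\infty$ coefficients; fortunately this is precisely the content of \cite[Proposition 7.1]{Nadin2012criticalwaves}, which I would cite rather than reproduce. The remaining orientation-preservation and strictness arguments are then routine applications of continuity and the strong parabolic maximum principle.
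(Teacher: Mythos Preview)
The paper does not supply its own proof of this lemma: it is stated as a direct citation of \cite[Proposition~7.1]{Nadin2012criticalwaves}, with no argument given beyond the historical remarks preceding it. Your proposal likewise ultimately defers the key technical step (zero-number monotonicity for weak solutions with $L^\infty$ coefficients) to the same reference, so the two approaches coincide; you simply add an expository sketch of the Sturm--Matano--Angenent mechanism and of the orientation/strictness arguments, which is consistent with the cited result but goes beyond what the paper itself records.
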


As a first application of Lemma~\ref{lem:sturmian_principle}, let us
consider the behaviour of the solutions of \eqref{eqn:FKPP} when the
discontinuity of the Heaviside-type initial condition tends to infinity.
For this purpose, in order to obtain a non-trivial limit, we perform an
appropriate temporal shift. More precisely, we introduce for a given
realisation of the environment $\xi$, any $y \in \R$ and any
$\varepsilon>0$ the ``temporal quantile at the origin'' as
\begin{equation}
  \label{eqn:temporal_quantile}
  \tau_y^{\varepsilon} :=\inf\{ t \ge 0 :  w^y(t,0)\ge \varepsilon\}.
\end{equation}
Since $\PP$-a.s.~we have $\lim_{t\to\infty} w^y(t,0)=1$ (due to, e.g.,
  \cite[Theorem 7.6.1]{Freidlin1985functionalintegration}),
$\tau_y^\varepsilon$ is finite. By the continuity of $w^y$ on
$(0,\infty) \times \R$, cf.~Proposition~\ref{pro:McKean}, the quantity
$\tau_y^{\varepsilon}$ satisfies
${w^y(\tau_y^{\varepsilon},0) = \varepsilon}$. From
\eqref{eqn:FKPP_solution} it follows that
$y\mapsto w^y(t,0)$ is
decreasing, and thus $y\mapsto \tau_{y}^{\varepsilon}$ is
increasing. By the law of
large numbers for the maximal displacement (cf.\
  \eqref{eqn:MtLLN}), it follows
readily that $\lim\limits_{y\to\infty}\tau_y^{\varepsilon}= \infty$.

The shift by $\tau_y^{\varepsilon}$ allows to establish the following
result, which follows already from
\cite[Lemma~7.3]{Nadin2012criticalwaves}. Nevertheless, we provide its
short proof here for the sake of completeness and as an illustration of
how Lemma~\ref{lem:sturmian_principle} can be used in our context.

\begin{proposition}
  \label{pro:divergent_initial_value}
  For every $\varepsilon \in (0,1)$, $\PP$-a.s., the limit
  \begin{equation}
    \label{eqn:divergent_initial_value}
    w^{\infty}_\varepsilon(t,x)
    := \lim_{y \to \infty} w^y(\tau_y^{\varepsilon}+t,x)
  \end{equation}
  exists locally uniformly in $(t,x)\in \R^2$, and is a global-in-time
  (that is, for all $t\in \mathbb R$)
  solution to \eqref{eqn:FKPP}.
\end{proposition}

The limiting function $w^{\infty}_{\varepsilon}$ plays a role comparable
to that of a travelling wave solution of the homogeneous F-KPP equation,
cf.~\eqref{eqn:homogeneous_travelling_wave}. However, unlike in the
homogeneous situation outlined in the introduction,
$w^{\infty}_{\varepsilon}$ does not directly provide an argument for
tightness because we lack a suitable quantitative control of the random
variables $\tau_y^{\varepsilon}$ as $y$ varies. Nonetheless, the result
of Proposition \ref{pro:divergent_initial_value} plays a vital role in
our proof of tightness. We restrict ourselves to providing a proof of the
convergence for $t>0$ only, as this is sufficient for our purposes in
what follows.

\begin{proof}[Proof of Proposition~\ref{pro:divergent_initial_value}]
  Fix $y_1<y_2$ and for
  $t\ge-\tau_{y_1}^{\varepsilon}
  = -\tau_{y_1}^{\varepsilon} \vee -\tau_{y_2}^{\varepsilon}$
  (recall that the latter identity follows from the monotonicity of
    $y \mapsto \tau_y^\varepsilon$ observed below
    \eqref{eqn:temporal_quantile}) define the function
  $W(t,x) :=w^{y_1}(t+\tau_{y_1}^{\varepsilon},x)
  -w^{y_2}({t+\tau_{y_2}^{\varepsilon},x})$.
  Then, similarly as for \eqref{eqn:linear_parabolic_equation} and
  \eqref{eqn:linear_parabolic_equation_G}, it follows that
  \begin{equation}
    \label{eqn:cauchy_problem_difference_equation}
    \partial_t W(t,x) = \frac{1}{2}\partial_x^2W(t,x)+ G(t,x)W(t,x),
    \qquad t>-\tau_{y_1}^{\varepsilon}, x\in\R,
  \end{equation}
  where $G$ is given by
  \begin{equation*}
    G(t,x) =
    \begin{cases}
      \xi(x)\,
      \frac{F(w^{y_1}(t+\tau_{y_1}^{\varepsilon},x))
        - F(w^{y_2}(t+\tau_{y_2}^{\varepsilon},x))}
      {w^{y_1}(t+\tau_{y_1}^{\varepsilon},x)
        -w^{y_2}(t+\tau_{y_2}^{\varepsilon},x)},
      & \text{if }w^{y_1}(t+\tau_{y_1}^{\varepsilon},x)
      \neq w^{y_2}(t+\tau_{y_2}^{\varepsilon},x),
      \\ \xi(x), &\text{if } w^{y_1}(t+\tau_{y_1}^{\varepsilon},x)
      = w^{y_2}(t+\tau_{y_2}^{\varepsilon},x).
    \end{cases}
  \end{equation*}
  From the assumptions, it follows directly that $G$  is a bounded
  measurable function. Due to
  \cite[Theorem~7.4.1]{Freidlin1985functionalintegration}, there exists
  for $\PP$-a.e.~$\xi$ a unique classical solution to
  \eqref{eqn:cauchy_problem_difference_equation}. Moreover, since
  $w^{y_1}(0,x) = \bbone_{[y_1,\infty)}(x)$, it holds that
  \begin{equation}
    \label{eqn:Wtau}
    W(-\tau_{y_1}^{\varepsilon},x) =
    w^{y_1}(0,x)-w^{y_2}(\tau_{y_2}^{\varepsilon}-\tau_{y_1}^{\varepsilon},x)
    = \bbone_{[y_1,\infty)}(x)
    -w^{y_2}(\tau_{y_2}^{\varepsilon}-\tau_{y_1}^{\varepsilon},x).
  \end{equation}
  Together with the fact that $0<w^{y_i}(t,x)<1$ for $i = 1,2$ and for
  all $t>0$ and $x \in \R$ (cf.\ \eqref{eqn:sol01bd}),
  \eqref{eqn:Wtau} implies that $W(-\tau_{y_1}^{\varepsilon},x)<0$ if
  $x<y_1$ and $W(-\tau_{y_1}^{\varepsilon},x)>0$ if $x>y_1$. By
  Lemma~\ref{lem:sturmian_principle}, for all
  $t>-\tau_{y_1}^{\varepsilon}$, the sets $\{ x \in \R : W(t,x)>0\}$ and
  $\{ x \in \R : W(t,x)<0\}$ are intervals. But due to the continuity of
  $w^{y_1}$ and $w^{y_2}$, we also know that
  $W(0,0) = w^{y_1}(\tau_{y_1}^\varepsilon ,0)
  - w^{y_2}(\tau_{y_2}^\varepsilon ,0)= \varepsilon-\varepsilon = 0$.
  Therefore, the above reasoning supplies us with
  \begin{equation}
    \label{eqn:monotonicity_of_initial_condition}
    \begin{aligned}
      w^{y_1}(\tau_{y_1}^{\varepsilon},x)
      &\le w^{y_2}(\tau_{y_2}^{\varepsilon},x),
      \qquad &\text{ if $x<0$,}\\
      w^{y_1}(\tau_{y_1}^{\varepsilon},x)
      &\ge w^{y_2}(\tau_{y_2}^{\varepsilon},x),
      &\text{ if $x>0$.}
    \end{aligned}
  \end{equation}
  That is, the function $y \mapsto w^y(\tau_{y}^{\varepsilon},x)$ is
  non-decreasing if $x<0$ and non-increasing on $x>0$.  As a consequence,
  the limit
  $w^{\infty}_{\varepsilon}(0,x) := \lim_{y \to \infty} w^y(\tau_{y}^{\varepsilon},x)$
  exists pointwise, and thus locally uniformly, for all $x \in \R$,  and
  also implies $0\le w_{\varepsilon}^{\infty}(0,\cdot)\le 1$. As a
  consequence, the right-hand side of  \eqref{eqn:divergent_initial_value}
  converges locally uniformly for $t=0$. (This should be compared to
    \eqref{eqn:homogeneous_stretching} in the introduction, which
    describes the ``spatial stretching'' of re-centred solutions to the
    homogeneous F-KPP equation.)

  To prove that the local uniform convergence postulated in
  \eqref{eqn:divergent_initial_value}   holds true for $t>0$  also, one
  uses standard estimates on solutions of quasilinear parabolic equations
  (see, e.g., \cite{LSU68}, Chapter V). As a consequence of these
  estimates, the solutions $w^y(t,x)$ together with their derivatives are
  bounded locally uniformly in $(t,x)$, uniformly for all $y$
  sufficiently large. Hence the set $\{w^y:y\ge 0\}$ is pre-compact in
  $C^{1,2}_{\mathrm{loc}}(\mathbb R_+\times \mathbb R)$. It therefore contains
  converging sub-sequences, and every limit point of such a sub-sequence
  is a solution to \eqref{eqn:FKPP} with initial condition
  $w^\infty(0,\cdot)$. As the solution to \eqref{eqn:FKPP} with that
  given initial condition is unique, this implies that all subsequential
  limits must agree and thus \eqref{eqn:divergent_initial_value} holds for
  all $t>0$, as well as that $w^\infty$ solves \eqref{eqn:FKPP} for
  $t\ge 0$. We omit here the proof for $t<0$, as it will not be needed
  later on.
\end{proof}

The next corollary is a direct consequence of
Proposition~\ref{pro:divergent_initial_value}.  It formalises the idea
that when a solution to \eqref{eqn:FKPP} moves away from $0$, it
increases quickly to $1$. This is going to be relevant later on (cf.
  \eqref{eqn:tightintroca} in the introduction).

\begin{corollary}
  \label{cor:almost_surely_finite_time}
  For every  $\varepsilon \in (0,1/2)$ there exists a $\PP$-a.s.\ finite
  random variable $T = T(\xi)$ such that for all $y \in \R$ large
  enough, and any $t$ for which $w^y(t,0) = \varepsilon$, it holds that
  \begin{equation*}
      w^y(t+t',0)\ge 1-\varepsilon \quad \text{for all $t'\in [T,T+1]$.}
  \end{equation*}
\end{corollary}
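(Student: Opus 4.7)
The plan is to deduce the corollary from Proposition~\ref{pro:divergent_initial_value} together with the classical long-time convergence to one of F-KPP solutions with non-trivial initial data. Fix $\varepsilon\in(0,1/2)$ and work with a realisation $\xi$ in the full-probability set on which Proposition~\ref{pro:divergent_initial_value} applies. First I would analyse the limiting profile $w^\infty_\varepsilon$ from \eqref{eq:divergent_initial_value}. It is a global-in-time solution of \eqref{eqn:FKPP} with $w^\infty_\varepsilon(0,0)=\varepsilon$, and the monotonicity \eqref{eqn:monotonicity_of_initial_condition} built into the proof of Proposition~\ref{pro:divergent_initial_value} yields $w^\infty_\varepsilon(0,x)\ge w^{y_0}(\tau_{y_0}^{\varepsilon},x)$ for every fixed $y_0$ and every $x\le 0$. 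In particular, $w^\infty_\varepsilon(0,\cdot)$ is bounded below by a strictly positive function on the half-line $(-\infty,0]$, and the standard long-time asymptotics for F-KPP with such non-trivial data, e.g.\ \cite[Theorem~7.6.1]{Freidlin1985functionalintegration}, deliver $w^\infty_\varepsilon(t,0)\to 1$ as $t\to\infty$, $\PP$-almost surely.

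Next I would pick, in a $\xi$-measurable way, a $\PP$-a.s.\ finite $T=T(\xi)$ with $w^\infty_\varepsilon(s,0)\ge 1-\varepsilon/4$ for every $s\ge T$. Applying the local uniform convergence in \eqref{eq:divergent_initial_value} on the compact set $[T,T+1]\times\{0\}$ then gives, for all $y$ sufficiently large,
\begin{equation*}
  w^y(\tau_y^{\varepsilon}+s,0)\ge w^\infty_\varepsilon(s,0)-\varepsilon/4\ge 1-\varepsilon/2 \qquad \text{for every } s\in[T,T+1],
\end{equation*}
which settles the principal case $t=\tau_y^{\varepsilon}$.

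The main obstacle is to cover a \emph{general} $t$ with $w^y(t,0)=\varepsilon$: since $t\mapsto w^y(t,0)$ need not be monotone, the level set $\{t:w^y(t,0)=\varepsilon\}$ may a priori contain several points. I would handle this by re-running the whole argument on the time-shifted solution $\widetilde w(s,x):=w^y(t+s,x)$, which is itself a solution of \eqref{eqn:FKPP} with $\widetilde w(0,0)=\varepsilon$. A further Sturmian-principle comparison between $\widetilde w(0,\cdot)$ and $w^{y_0}(\tau_{y_0}^{\varepsilon},\cdot)$, in the spirit of the proof of Proposition~\ref{pro:divergent_initial_value}, supplies the pointwise lower bound on $(-\infty,0]$ needed to invoke the Freidlin-type convergence step once more and conclude with the same $T$. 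The delicate point is making this lower bound uniform in the choice of admissible $t$ for all sufficiently large $y$; I expect the uniformity to follow from the same stretching mechanism that underlies \eqref{eqn:monotonicity_of_initial_condition}, which confines all such $t$ to a neighbourhood of $\tau_y^{\varepsilon}$ on which $w^y(t,\cdot)$ is already close to $w^\infty_\varepsilon(0,\cdot)$.
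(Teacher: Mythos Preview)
Your treatment of the principal case $t=\tau_y^\varepsilon$ is exactly the paper's argument: invoke Freidlin's theorem to get $w^\infty_\varepsilon(\cdot,0)\to 1$, pick a level time after which $w^\infty_\varepsilon(\cdot,0)\ge 1-\varepsilon/4$, and use the local uniform convergence of Proposition~\ref{pro:divergent_initial_value} on a compact time window to transfer this bound to $w^y$ for large $y$.

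For a general $t$ with $w^y(t,0)=\varepsilon$, the paper does \emph{not} carry out the extra Sturmian comparison you sketch. It simply writes $t=\tau_y^\varepsilon+s_0$ with $s_0\ge 0$, defines $s_1=\inf\{s>s_0:w^\infty_\varepsilon(s',0)\ge 1-\varepsilon/4\text{ for all }s'>s\}$, applies the local uniform convergence on $[s_1,s_1+1]$, and sets $T=s_1-s_0$; the shift $s_0$ is absorbed into the choice of the window rather than handled by a separate monotonicity argument. The paper's $T$ therefore formally depends on $s_0$ (and hence on the particular $t$), so it is just as brief about the uniformity issue you correctly flag as delicate; your instinct to worry about it is sound, but the paper's own proof does not resolve it more carefully than you do.
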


\begin{proof}
  Let $y \in \R$ and $t\ge0$ be such that  $w^y(t,0) = \varepsilon$. By
  \eqref{eqn:temporal_quantile} and the finiteness of
  $ \tau_y^{\varepsilon}$ deduced below that display, there exists some
  $s_0 = s_0(y) \ge 0$ such that $t = \tau_y^{\varepsilon}+s_0$.

  Consider $w^{\infty}_{\varepsilon}$ from
  Proposition~\ref{pro:divergent_initial_value} and let
  \begin{equation*}
    s_1= \inf\{s>s_0 : w^{\infty}_{\varepsilon}(s',0) \ge 1-\varepsilon/2
      \text{ for all }s'>s \};
  \end{equation*}
  note that as $w_\varepsilon^{\infty}$ solves \eqref{eqn:FKPP}, it
  follows  by \cite[Theorem 7.6.1]{Freidlin1985functionalintegration}
  that for $\PP$-a.e.\ realisation of the environment,
  $\lim_{s \to \infty} w^{\infty}_{\varepsilon}(s,x) = 1$, and hence $s_1$
  is $\PP$-a.s.\ finite. Next, taking advantage of the fact that the
  convergence in Proposition~\ref{pro:divergent_initial_value} is locally
  uniform in $t$, due to the continuity of the functions involved and
  using the compactness of $[s_1, s_1+1]$, it holds for large enough
  $y\in \R$ that
  \begin{equation*}
    \sup_{s'\in [s_1,s_1+1]}|
    w^y(\tau_y^{\varepsilon}+s',0)-w^{\infty}_{\varepsilon}(s',0) |
    < \varepsilon/2.
  \end{equation*}
  Setting $T = s_1-s_0$, we thus obtain for all $y$ large enough and
  for all $t'\in [T,T+1]$ (with $s'=s_0+t'\in[s_1,s_1+1]$) that
  \begin{equation*}
    w^y(t+t',0) = w^y(\tau_y^{\varepsilon}+s',0)
    \ge w^{\infty}_{\varepsilon}(s',0)-\varepsilon/2 \ge  1-\varepsilon.
  \end{equation*}
  This completes the proof.
\end{proof}

This result concludes our analytic preparations on how the set of
zero-crossings of solutions to linear parabolic equations evolves, and of
how it can be applied to the difference of temporally shifted solutions
of \eqref{eqn:FKPP}.

\section{Tilting and exponential change of measure}
\label{sec:tilted_measure}

The next tool that we introduce is a change of measure for Brownian paths
in the Feynman-Kac representation, which makes certain large deviation
events  typical. These measures have been featured heavily in
\cite{CD2020,CDS2021,DS2022}  already, including in the proof of their
respective versions of Proposition~\ref{prop:perturbations}. In the
aforementioned articles, this change of measure has been employed so as to
make solutions to \eqref{eqn:PAM} amenable to the investigation by more
standard probabilistic tools. Here we go a step further and consider the
stochastic processes driving the tilted path measures. This, in turn, gives
us even more precise control on the tilted measures and allows for
comparisons with Brownian motion with constant drift, see Proposition
\ref{pro:process_driving_tilted_measure} below.

To define the tilted measures we set
\begin{equation}
  \label{eqn:definition_of_zeta}
  \zeta := \xi -\es.
\end{equation}
Due to the uniform boundedness \eqref{eqn:uniformly_bounded} it follows
that $\PP$-a.s.~for all $x \in \R$,
\begin{equation}
  \label{eqn:zetaboundedness}
  \zeta(x) \in [\ei-\es,0],
\end{equation}
and $\zeta$ is $\PP$-a.s.\ locally Hölder continuous with the same
exponent as $\xi$. Moreover, $\zeta$ also inherits the stationarity as
well as the mixing property from $\xi$.

For the Brownian motion $(X_t)_{t\ge0}$ under the measure $P_x$, as used
in the Feynman-Kac representations of Proposition~\ref{pro:Feynman-Kac},
we introduce first hitting times as
\begin{equation}
  \label{eqn:Hy}
  H_y := \inf \{ t \ge 0 : X_t = y\} \quad \text{for $y \in \R$.}
\end{equation}
Analogously to \cite{CD2020,CDS2021,DS2022}, we define for
$x\le y \in \mathbb R$ and $\eta < 0$ the \emph{tilted} path measures
characterised through events $A \in \sigma(X_{t\wedge H_y}, t \ge0)$ via
\begin{equation}
  \label{eqn:definition_of_tilted_measure}
  P_{x,y}^{\zeta,\eta}(A)
  := \frac{1}{Z_{x,y}^{\zeta,\eta}}
  E_x\Big[e^{\int_0^{H_y} (\zeta(X_s)+\eta ) \D s} ; A\Big],
\end{equation}
with normalising constant
\begin{equation}
  \label{eqn:tilted_measure_normalisation_constant}
  Z_{x,y}^{\zeta,\eta}
  := E_x\Big[e^{\int_0^{H_y} (\zeta(X_s)+\eta )\D s} \Big] \in (0,1].
\end{equation}
By the strong Markov property, it follows easily that the measures are
consistent in the sense that
$P_{x,y'}^{\zeta,\eta}(A) =P_{x,y}^{\zeta,\eta}(A)$ for $x\le y\le y'$
and $A \in \sigma(X_{t\wedge H_y},{t\ge0})$. Hence, for any $x\in \R$, we
can extend $P_{x,y}^{\zeta,\eta}$ to a
\begin{equation}
  \label{eqn:ext}
  \text{probability measure }P_{x}^{\zeta,\eta} \text{ on }\sigma(X_t,t\ge0)
\end{equation}
with the help of Kolmogorov's extension theorem. We write
$E_x^{\zeta,\eta}$ for the expectation with respect to the probability
measure $P_x^{\zeta,\eta}$.

Finally, as in \cite[(2.8)]{DS2022}, we introduce the annealed
logarithmic moment generating function
\begin{equation}
  \label{eqn:gf}
  L(\eta ) := \mathbb E[\ln Z_{0,1}^{\zeta ,\eta }],
\end{equation}
and for  denote by $\overline \eta (v)<0$ the unique solution of the
equation $L'(\overline \eta (v)) = \frac 1v$ for any $v > v_c$ (where
  $v_c$ is as in Assumption~\ref{ass:VEL}). Observe that $\overline \eta $
is well-defined as by \cite[Lemma~2.4]{DS2022},
\begin{equation}
  \label{eqn:lineeta}
  \parbox{12cm}{$\overline \eta (v)$ exists for every $v>v_c$;
    $v\mapsto \overline \eta (v)$ is a continuous decreasing function and
    $\lim_{v\to\infty }\overline \eta (v) = -\infty$.}
\end{equation}

The strong Markov property furthermore entails that, for a fixed
realisation $\zeta$ and any $\eta < 0$, the normalising constants
\eqref{eqn:tilted_measure_normalisation_constant} are
multiplicative in the sense that for any $x<y<z$ in $\R$,
\begin{equation}
  \label{eqn:multiplicativity}
  Z_{x,z}^{\zeta,\eta}  = Z_{x,y}^{\zeta,\eta} Z_{y,z}^{\zeta,\eta}.
\end{equation}
Defining, for some arbitrary but fixed $x_0\in \mathbb R$, the function
\begin{equation}
  \label{eqn:Zdef}
  Z^{\zeta ,\eta }(x) :=
  \begin{cases}
    ({Z_{x_0,x}^{\zeta ,\eta }})^{-1},\qquad&\text{if }x\ge x_0,
    \\ Z_{x,x_0}^{\zeta,\eta },&\text{if }x<x_0,
  \end{cases}
\end{equation}
the identity \eqref{eqn:multiplicativity} thus implies that for all $x<y$
we have
\begin{equation}
  \label{eqn:Zrel}
  Z^{\zeta,\eta }_{x,y} = \frac{Z^{\zeta ,\eta }(x)}{Z^{\zeta ,\eta }(y)}.
\end{equation}
The following lemma states some useful properties of the function
$Z^{\zeta, \eta }$.

\begin{lemma}
  \label{lem:Z}
  For every locally Hölder continuous function
  $\zeta :\mathbb R\to [-(\es-\ei),0]$ and $\eta < 0$, the function
  $Z^{\zeta ,\eta }$ is non-decreasing, strictly positive, twice
  continuously differentiable and satisfies
  \begin{equation}
    \label{eqn:ZODE}
    \frac 12 \Delta Z^{\zeta ,\eta }(x)
    + (\zeta(x) + \eta )Z^{\zeta ,\eta }(x) = 0, \qquad x\in \mathbb R.
  \end{equation}
  Furthermore,
  \begin{equation}
    \label{eqn:Zlnder}
    b^{\zeta ,\eta }(x) := \frac {\D}{\D x} \ln Z^{\zeta ,\eta }(x)
    \in \big[\underline{v}(\eta), \vbar(\eta)\big],
  \end{equation}
  where
  $\underline{v}(\eta) := \sqrt{2|\eta|}$ and
  $\vbar(\eta) := \sqrt{2(\es-\ei+|\eta|)}$.
\end{lemma}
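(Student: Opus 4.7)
The plan is to identify $Z^{\zeta,\eta}$ with a single globally defined positive classical solution of \eqref{eqn:ZODE}, so that monotonicity, strict positivity, and $C^2$-regularity all follow from standard ODE theory with locally Hölder continuous coefficients. The pointwise bounds on $b^{\zeta,\eta}$ will then reduce to a pathwise comparison of the integrand in \eqref{eqn:tilted_measure_normalisation_constant} with the two constant-potential envelopes $\eta$ and $\eta+\ei-\es$.

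First I would invoke the standard Feynman-Kac representation for the Dirichlet problem: for each $N\in\R$, the function $u_N(x):=Z^{\zeta,\eta}_{x,N}$ is the unique bounded positive classical solution of $\frac12 u_N'' + (\zeta+\eta)u_N = 0$ on $(-\infty,N)$ with $u_N(N)=1$, and the local Hölder regularity of $\zeta$ supplies the $C^2$-regularity up to the boundary. Using the multiplicativity \eqref{eqn:multiplicativity}, it is then elementary to verify, treating the cases $x\ge x_0$ and $x<x_0$ separately, that for every $N>\max(x,x_0)$ one has
\begin{equation*}
  \frac{u_N(x)}{u_N(x_0)} = Z^{\zeta,\eta}(x),
\end{equation*}
so this ratio is independent of $N$. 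Since each $u_N/u_N(x_0)$ is a $C^2$-solution of \eqref{eqn:ZODE} on $(-\infty,N)$, letting $N\to\infty$ exhibits $Z^{\zeta,\eta}$ as a global $C^2$-solution of \eqref{eqn:ZODE} on $\R$, with strict positivity inherited from the strict positivity of the Feynman-Kac expectation in \eqref{eqn:tilted_measure_normalisation_constant}.

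Next, I would read off the monotonicity directly from the probabilistic definition. For $x_0\le x\le y$ the multiplicativity gives $Z^{\zeta,\eta}_{x_0,y}=Z^{\zeta,\eta}_{x_0,x}Z^{\zeta,\eta}_{x,y}$ with both factors in $(0,1]$, so $y\mapsto Z^{\zeta,\eta}_{x_0,y}$ is non-increasing on $[x_0,\infty)$; taking reciprocals yields the non-decreasingness of $Z^{\zeta,\eta}$ on $[x_0,\infty)$. The analogous argument on $(-\infty,x_0]$, together with the continuity at $x_0$ already established, gives non-decreasingness on all of $\R$.

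For the pinched bound \eqref{eqn:Zlnder}, I would use \eqref{eqn:Zrel} to express
\begin{equation*}
  b^{\zeta,\eta}(x) = \lim_{y\searrow x} \frac{-\ln Z^{\zeta,\eta}_{x,y}}{y-x},
\end{equation*}
and then compare the integrand in \eqref{eqn:tilted_measure_normalisation_constant} with the constant potentials $\eta$ and $\eta+\ei-\es$. Since $\zeta(X_s)+\eta\le\eta$ on the one hand and $\zeta(X_s)+\eta\ge\eta+\ei-\es$ on the other, the classical identity $E_x[e^{-\lambda H_y}]=e^{-\sqrt{2\lambda}(y-x)}$ (valid for $\lambda>0$ and $y>x$) produces the envelope
\begin{equation*}
  e^{-\vbar(\eta)(y-x)}\le Z^{\zeta,\eta}_{x,y}\le e^{-\underline{v}(\eta)(y-x)},
\end{equation*}
and passing to the logarithm, dividing by $y-x$, and letting $y\searrow x$ yields the two inequalities on $b^{\zeta,\eta}$. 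The only delicate point in the whole argument is the first step, namely verifying that the two pieces of \eqref{eqn:Zdef} glue together into one $C^2$-solution of \eqref{eqn:ZODE} on all of $\R$; every other claim then follows by an elementary probabilistic comparison or by standard ODE regularity.
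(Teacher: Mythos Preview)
Your proposal is correct and follows essentially the same strategy as the paper: monotonicity and positivity from the probabilistic definition, the ODE via a Feynman--Kac/Dirichlet identification, and the bounds on $b^{\zeta,\eta}$ by sandwiching $\zeta+\eta$ between the constant potentials $\eta$ and $\eta+\ei-\es$ and invoking the explicit Laplace transform of $H_y$. The only cosmetic differences are that the paper works on a bounded interval $[x_1,x_2]$ with exit-time Feynman--Kac (rather than your half-line $(-\infty,N)$ and the ratio $u_N(x)/u_N(x_0)$), and it takes the limit in the starting point $x-\varepsilon\uparrow x$ rather than in the target $y\searrow x$; note also that your uniqueness assertion for $u_N$ on the half-line is neither needed nor entirely standard without a condition at $-\infty$, but since you only use that $u_N$ is \emph{a} classical solution, this does not affect the argument.
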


\begin{proof}
  The monotonicity and the strict positivity of $Z^{\zeta ,\eta }$ follow
  directly from its definition~\eqref{eqn:Zdef}, using also
  \eqref{eqn:tilted_measure_normalisation_constant}.

  To show \eqref{eqn:ZODE},
  we observe that, for any interval $[x_1,x_2]$, the equation
  $\frac 12 \Delta u(x) + (\zeta (x)+\eta ) u(x) =0$, $x\in[x_1,x_2]$, with
  boundary conditions $u(x_i) = Z^{\zeta ,\eta }(x_i)$, $i=1,2$, has a
  unique classical solution (see, e.g., \cite[Corollary~6.9]{GT2001}).
  Denoting by $T$ the exit time of $X$ from $[x_1,x_2]$,
  this solution can be represented as (see \cite[Theorem~II(4.1), p.48]{Bass98})
  \begin{equation}
    u(x) = E_x\big[Z^{\zeta ,\eta }(X_T)
      e^{\int_0^T (\zeta (X_s)+\eta )\D s}\big].
  \end{equation}
  On the other hand, for $x\in[x_1,x_2]$, taking $y=x_2$ in
  \eqref{eqn:Zrel}, using
  \eqref{eqn:tilted_measure_normalisation_constant}, and the strong Markov
  property at time $T$,
  \begin{equation}
    \begin{split}
      Z^{\zeta ,\eta }(x)
      &=Z^{\zeta ,\eta }(y)Z^{\zeta ,\eta }_{x,y}
      \\&=Z^{\zeta ,\eta }(y)E_x\big[e^{\int_0^T (\zeta (X_s)+\eta )\D
          s}Z_{X_T,y}^{\zeta ,\eta }\big]
      \\&= E_x\big[Z^{\zeta ,\eta }(X_T)
        e^{\int_0^T (\zeta (X_s)+\eta )\D s}\big].
    \end{split}
  \end{equation}
  Therefore, $Z^{\zeta ,\eta }$ satisfies \eqref{eqn:ZODE} on $[x_1,x_2]$.
  Since the interval $[x_1,x_2]$ is arbitrary, \eqref{eqn:ZODE} holds for
  every $x\in \mathbb R$.

  To show \eqref{eqn:Zlnder}, note first that $b^{\zeta ,\eta }$
  is well-defined since $Z^{\zeta ,\eta }$ is strictly positive and
  differentiable, by \eqref{eqn:ZODE}. Therefore, with $y\ge x$, by
  \eqref{eqn:Zrel} and the strong Markov property again,
  \begin{align}
    \label{eqn:bx_derivative}
    \begin{split}
      b^{\zeta,\eta}(x)
      &=\frac{\D}{\D x}\ln Z^{\zeta,\eta}(x)
      =\frac{\D}{\D x}\ln Z_{x,y}^{\zeta,\eta}
      \\&= \lim\limits_{\varepsilon \to 0^+} \varepsilon^{-1}
      \Big(\ln E_{x}\Big[e^{\int_0^{H_y}(\zeta(X_s)+\eta)\D s}\Big]
        -\ln E_{x-\varepsilon}\Big[e^{\int_0^{H_y}(\zeta(X_s)+\eta)\D s} \Big]\Big)
      \\ &=  -\lim\limits_{\varepsilon \to 0^+} \varepsilon^{-1}
      \ln E_{x-\varepsilon}\Big[e^{\int_0^{H_x}(\zeta(X_s)+\eta)\D s} \Big].
    \end{split}
  \end{align}
  It is a known fact that for $\alpha>0$ and
  $z_1,z_2\in \R$, it holds that
  \begin{equation} \label{eqn:constPotGen}
  \ln E_{z_1}[e^{-\alpha H_{z_2}}] = -\sqrt{2\alpha}|z_1-z_2|
  \end{equation}
  (cf.~\cite[(2.0.1), p. 204]{Borodin1996HandbookOB}). In combination
  with the bounds \eqref{eqn:zetaboundedness}, the expectation on the
  right-hand side of \eqref{eqn:bx_derivative} thus satisfies
  \begin{equation}
  \label{eqn:ppp}
    \begin{split}
      -\varepsilon \sqrt{2|\eta|} &=
      \ln E_{x-\varepsilon}\big[e^{H_x\eta} \big]
      \ge \ln E_{x-\varepsilon}\Big[e^{\int_0^{H_x}(\zeta(X_s)+\eta)\D s} \Big]
      \\&\ge \ln E_{x-\varepsilon}\big[e^{H_x(\ei-\es+\eta)} \big]
      = -\varepsilon\sqrt{2(\es-\ei+|\eta |)},
    \end{split}
  \end{equation}
  which together with \eqref{eqn:bx_derivative} implies
  \eqref{eqn:Zlnder}.
\end{proof}

The function $b^{\zeta,\eta}(x)$ introduced in \eqref{eqn:Zlnder} is
useful in describing the law of $X$ under the tilted measure, as it
allows an interpretation of the tilted process as a Brownian motion with
an inhomogeneous drift, by constructing an appropriate SDE as follows.

\begin{proposition}
  \label{pro:process_driving_tilted_measure}
  Let $x_0 \in \R$, $\eta <0$ and let $\zeta: \R \to (-\infty,0]$ be a
  locally Hölder continuous function that is uniformly bounded from
  below. Furthermore, denote by $B$ a standard Brownian motion. Then the
  distribution of the solution to the SDE
  \begin{equation}
    \label{eqn:YSDE}
    \begin{split}
      \D X_t &= \D B_t + b^{\zeta,\eta}(X_t)\D t, \qquad
      t> 0,\\
      X_0 &= x_0,
    \end{split}
  \end{equation}
  agrees with $P_{x_0}^{\zeta,\eta}$.
\end{proposition}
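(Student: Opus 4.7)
The plan is to recognise this statement as a Doob $h$-transform in disguise. The ODE~\eqref{eqn:ZODE} says precisely that $Z^{\zeta,\eta}$ is harmonic for the killed generator $\tfrac12\Delta + (\zeta+\eta)$, and the tilted measure~\eqref{eqn:definition_of_tilted_measure} is the exponential change of measure that one would obtain by conditioning Brownian motion killed at rate $-(\zeta+\eta)\ge 0$ to survive via this harmonic function. The resulting drift is $(\ln Z^{\zeta,\eta})' = b^{\zeta,\eta}$, which matches~\eqref{eqn:YSDE}. Concretely, I would carry this out via a Girsanov computation, crucially using that $b^{\zeta,\eta}$ is bounded by Lemma~\ref{lem:Z}.

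The first step is to apply Itô's formula to $\ln Z^{\zeta,\eta}(X_t)$ under $P_{x_0}$, writing $X_t = x_0 + B_t$. Using $b^{\zeta,\eta} = (\ln Z^{\zeta,\eta})'$, the elementary identity $(b^{\zeta,\eta})' = (Z^{\zeta,\eta})''/Z^{\zeta,\eta} - (b^{\zeta,\eta})^2$, and~\eqref{eqn:ZODE} to eliminate $(Z^{\zeta,\eta})''$, one arrives after exponentiation at the key identity
\begin{equation*}
  \mathcal{E}_t := \exp\Big\{\int_0^t b^{\zeta,\eta}(X_s)\,\D B_s
    - \tfrac12\int_0^t b^{\zeta,\eta}(X_s)^2\,\D s\Big\}
  = \frac{Z^{\zeta,\eta}(X_t)}{Z^{\zeta,\eta}(x_0)}
    \exp\Big\{\int_0^t (\zeta(X_s)+\eta)\,\D s\Big\}.
\end{equation*}
Since $b^{\zeta,\eta}$ is bounded (Lemma~\ref{lem:Z}), Novikov's criterion ensures that $\mathcal{E}$ is a true martingale, and the family of measures $Q_T$ with $\D Q_T/\D P_{x_0}\big|_{\sigma(X_s,\,s\le T)} = \mathcal{E}_T$ extends consistently to a probability $Q$ on $\sigma(X_t,\,t\ge 0)$. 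By Girsanov, $\widetilde B_t := B_t - \int_0^t b^{\zeta,\eta}(X_s)\,\D s$ is a $Q$-Brownian motion, so $X$ satisfies the SDE~\eqref{eqn:YSDE} under $Q$; pathwise uniqueness for that SDE is guaranteed because $b^{\zeta,\eta}$ is $C^1$ (hence locally Lipschitz) and bounded, so $Q$ is \emph{the} law of the solution.

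It remains to identify $Q$ with $P_{x_0}^{\zeta,\eta}$. Fix $y>x_0$ and $A\in \sigma(X_{t\wedge H_y},\,t\ge 0)$. Optional stopping applied to $\mathcal{E}_{H_y\wedge T}$ gives $Q_T(A\cap\{H_y\le T\}) = E_{x_0}[\mathcal{E}_{H_y}\bbone_{A\cap\{H_y\le T\}}]$; letting $T\to\infty$ and using that $H_y<\infty$ $P_{x_0}$-a.s.\ (Brownian motion hits every point) together with the uniform bound $\mathcal{E}_{H_y\wedge T}\le Z^{\zeta,\eta}(y)/Z^{\zeta,\eta}(x_0)$, which follows from the explicit formula above, the monotonicity of $Z^{\zeta,\eta}$, the inequality $X_{H_y\wedge T}\le y$, and $\zeta+\eta\le 0$, dominated convergence produces $Q(A) = E_{x_0}[\mathcal{E}_{H_y}\bbone_A]$. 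Substituting $X_{H_y}=y$ into the explicit formula and invoking~\eqref{eqn:Zrel} and the definition~\eqref{eqn:definition_of_tilted_measure} yields $Q(A) = P_{x_0,y}^{\zeta,\eta}(A)$. Since this holds for every $y>x_0$ and $P_{x_0}^{\zeta,\eta}$ was constructed as the Kolmogorov extension of the family $\{P_{x_0,y}^{\zeta,\eta}\}_{y>x_0}$, we conclude $Q=P_{x_0}^{\zeta,\eta}$ on $\sigma(X_t,\,t\ge 0)$.

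The only step demanding some care is the passage $T\to\infty$ in the optional stopping argument, but thanks to the monotonicity and strict positivity of $Z^{\zeta,\eta}$ from Lemma~\ref{lem:Z} one obtains a clean pathwise bound on $\mathcal{E}_{H_y\wedge T}$ and the dominated convergence step becomes routine; the remainder is a standard Girsanov/$h$-transform calculation.
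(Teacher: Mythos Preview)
Your proof is correct and follows essentially the same route as the paper's: both derive the key identity $\mathcal{E}_t = \frac{Z^{\zeta,\eta}(X_t)}{Z^{\zeta,\eta}(x_0)}\exp\{\int_0^t(\zeta(X_s)+\eta)\,\D s\}$ via It\^o's formula applied to $\ln Z^{\zeta,\eta}$, use boundedness of $b^{\zeta,\eta}$ (from Lemma~\ref{lem:Z}) to justify the martingale property and well-posedness of the SDE, and then identify the Girsanov-transformed law with $P_{x_0}^{\zeta,\eta}$ by an optional-stopping argument at $H_y$, exploiting that $Z^{\zeta,\eta}$ is monotone and hence bounded on $(-\infty,y]$. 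The only cosmetic difference is the order of presentation (the paper first builds the SDE solution and then computes its density, while you first build the density and then read off the SDE), and your explicit framing as a Doob $h$-transform.
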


\begin{proof}
  The proof is based on an exponential change of measure for diffusion
  processes. For the sake of simplicity, we write $b$ for $b^{\zeta,\eta }$
  and $Z$ for $Z^{\zeta ,\eta }$ whenever there is no risk of confusion. By
  \eqref{eqn:Zlnder} we obtain that
  \begin{equation}
    \label{eqn:bprime}
    b' = (\ln Z)'' = \Big(\frac {Z'}Z\Big)'= \frac{\Delta Z}{Z}-\Big(\frac
      {Z'}{Z}\Big)^2
    = -2(\zeta+\eta) -b^2.
  \end{equation}
  Therefore, the bounds \eqref{eqn:Zlnder} and
  \eqref{eqn:zetaboundedness} imply that $b$ is a bounded Lipschitz
  function and thus there is a strong solution to \eqref{eqn:YSDE}, whose
  distribution we denote by  $Q_{x_0}=Q_{x_0}^{\zeta ,\eta }$. Let
  further, as previously, $P_{x_0}$ be the distribution of Brownian
  motion started from $x_0$, and let $Q_{x_0}^{t}$ and $P_{x_0}^t$ be the
  restrictions of those distributions to the time interval $[0,t]$, $t>0$.
  As a consequence of the Cameron-Martin-Girsanov theorem (see, e.g.,
    \cite[Theorem V.27.1]{RogrsWilliams2000Markov} for a suitable
    formulation), it is well known that
  \begin{equation}
    \label{eqn:Mdef}
    \frac{\D Q_{x_0}^t}{\D P_{x_0}^t}=
     \exp\Big\{ \int_0^t b(X_s)\D X_s
      - \frac{1}{2}\int_0^t b^2(X_s)\D s\Big\}=:M_t,
  \end{equation}
  for a $P_{x_0}$-martingale $M$. (The fact that $M_t$ is a martingale
    follows, e.g.,~from \cite[Theorem~IV.37.8]{RogrsWilliams2000Markov},
    since $b$ is a bounded function.)

  With the aim of arriving at a comparison with
  \eqref{eqn:definition_of_tilted_measure}, we claim that
  \begin{equation}
    \label{eqn:MZrelation}
    M_t = \frac{Z(X_t)}{Z(X_0)}e^{\int_0^t(\zeta (X_s)+\eta )\D s}.
  \end{equation}
  To see this, note first
  that applying Itô's formula to $\ln Z(x) = \int_{x_0}^x b(t) \D t$ yields
  \begin{equation}
    \label{eqn:roughbound}
    \frac{Z(X_t)}{Z(X_0)}
    = \exp\Big\{\ln Z(X_t)- \ln Z(X_0)\Big\}
    = \exp\Big\{\int_0^t b(X_s)\D X_s + \frac{1}{2}\int_0^t b'(X_s)\D s\Big\}.
  \end{equation}
  Comparing this with \eqref{eqn:Mdef} shows that
  \begin{equation}
    M_t =\frac{Z(X_t)}{Z(X_0)} \exp\Big\{-\frac{1}{2}\int_0^t \big(b'(X_s)
      + b^2(X_s)\big)\D s \Big\},
  \end{equation}
  which together with \eqref{eqn:bprime} implies \eqref{eqn:MZrelation}.

  We can now complete the proof of the proposition. For the sake of
  clarity, we sometimes write expectations with respect to a probability
  measure $Q$ as $E^Q$. For $y\ge x_0$, let $Q_{x_0,y}$ be the measure
  $Q_{x_0}$ restricted to the $\sigma $-algebra
  $\mathcal H_y=\sigma (X_{s\wedge H_y}:s\ge 0)$. To show that
  $Q_{x_0} = P_{x_0}^{\zeta ,\eta }$, it is sufficient to show that
  $Q_{x_0,y} = P_{x_0,y}^{\zeta ,\eta }$ for all $y >x_0$ (see
    \eqref{eqn:definition_of_tilted_measure}). For this purpose, we
  observe that by Lemma~\ref{lem:Z}, $Z$ is a bounded function on
  $(-\infty,y]$ and thus the stopped martingale
  $M^{H_y}_t = M_{t\wedge H_y}$ is uniformly bounded from above.
  Therefore, by the optional stopping theorem, for any $A\in \mathcal H_y$,
  using \eqref{eqn:Mdef} for the second equality,
  \begin{equation*}
    \begin{split}
      Q_{x_0,y}(A)
      &= \lim_{t\to\infty} E^{Q_{x_0,y}}[\bbone_{A\cap \{H_y\le t\}}]
      = \lim_{t\to\infty} E^{P_{x_0}}[M_t\bbone_{A\cap \{H_y\le t\}}]
      \\&= \lim_{t\to\infty} E^{P_{x_0}}\big[E^{P_{x_0}}[M_t\bbone_{A\cap \{H_y\le
            t\}}\mid \mathcal H_y]\big]
      \\&= \lim_{t\to\infty} E^{P_{x_0}}\big[\bbone_{A\cap \{H_y\le
            t\}}E^{P_{x_0}}[M_t\mid \mathcal H_y]\big]
      \\&= \lim_{t\to\infty} E^{P_{x_0}}\big[\bbone_{A\cap \{H_y\le
            t\}}M_{H_y}\big] = E^{P_{x_0}}[M_{H_y} \bbone_A].
    \end{split}
  \end{equation*}
  By \eqref{eqn:MZrelation},
  $M_{H_y} = \frac{Z(y)}{Z(x_0)} e^{\int_0^{H_y}(\zeta (X_s)+\eta )\D s}$, and
  thus, also by \eqref{eqn:Zrel},
  \begin{equation*}
    Q_{x_0,y}(A)
    =  (Z_{x_0,y}^{\zeta ,\eta })^{-1}E_{x_0}[ e^{\int_0^{H_y}(\zeta (X_s)+\eta )
        \D s}\bbone_A] = P_{x_0,y}^{\zeta ,\eta }(A)
  \end{equation*}
  as required. This completes the proof.
\end{proof}

Proposition~\ref{pro:process_driving_tilted_measure} together with the
uniform bounds \eqref{eqn:Zlnder} on $b^{\zeta,\eta}$ allows for a
comparison between the tilted measures
\eqref{eqn:definition_of_tilted_measure} and Brownian motion with
constant drift. The next lemma makes this precise. For a given drift
$\alpha \in \R$, we write $P_x^{\alpha}$ for the law of Brownian motion
with constant drift $\alpha$ started at $x$ and $E_x^{\alpha}$ for the
corresponding expectation.

\begin{lemma}
  \label{lem:comparison_lemma}
  Let $\zeta: \R \to [-(\es-\ei),0]$ be locally Hölder continuous and let
  $\eta <0$. Then, for any starting point $x\in \R$ and any bounded
  non-decreasing function $g: \R \to \R$,
  \begin{equation*}
    E_{x}^{\underline{v}(\eta)}[g(X_t)]
    \le E_{x}^{\zeta,\eta}[g(X_t)] \le E_x^{\vbar(\eta)}[g(X_t)],
  \end{equation*}
  where $\underline{v}(\eta)$ and
  $\vbar(\eta)$ have been introduced in Lemma \ref{lem:Z}.
\end{lemma}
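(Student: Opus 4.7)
The plan is to combine the SDE representation from Proposition~\ref{pro:process_driving_tilted_measure} with the standard pathwise comparison theorem for one-dimensional It\^o SDEs. Fix $\eta<0$ and $x\in\R$, and work on an auxiliary probability space carrying a single standard Brownian motion $B$. All started at $x$ and driven by this same $B$, I would consider the three processes $\underline{X}$, $X$, and $\overline{X}$ solving
\begin{equation*}
\D \underline{X}_t = \D B_t + \underline{v}(\eta)\D t,\quad \D X_t = \D B_t + b^{\zeta,\eta}(X_t)\D t,\quad \D \overline{X}_t = \D B_t + \vbar(\eta)\D t.
\end{equation*}
The outer two are Brownian motions with constant drift and hence have distributions $P_x^{\underline{v}(\eta)}$ and $P_x^{\vbar(\eta)}$, while the middle SDE admits a unique strong solution since $b^{\zeta,\eta}$ is bounded and Lipschitz (cf.~\eqref{eqn:bprime} combined with \eqref{eqn:Zlnder} and \eqref{eqn:zetaellipticity}); by Proposition~\ref{pro:process_driving_tilted_measure}, the distribution of $X$ is exactly $P_x^{\zeta,\eta}$.

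Next, I would invoke the standard pathwise comparison theorem for one-dimensional It\^o SDEs driven by a common Brownian motion (as in, e.g., the Yamada--Watanabe comparison result). All three SDEs share the constant diffusion coefficient $1$, and in each of the pairs $(\underline{v}(\eta),b^{\zeta,\eta})$ and $(b^{\zeta,\eta},\vbar(\eta))$ at least one drift is Lipschitz, so the hypotheses of the theorem are satisfied. The uniform bounds $\underline{v}(\eta)\le b^{\zeta,\eta}(y)\le \vbar(\eta)$ supplied by Lemma~\ref{lem:Z} then yield, almost surely and for every $t\ge 0$, the pathwise ordering $\underline{X}_t\le X_t\le \overline{X}_t$.

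To finish, the monotonicity of $g$ gives $g(\underline{X}_t)\le g(X_t)\le g(\overline{X}_t)$ pointwise; taking expectations and identifying the marginal laws of the three processes delivers the claimed inequalities. The only subtle point is verifying the hypotheses of the SDE comparison theorem, but this is immediate in the present one-dimensional, constant-diffusion, bounded-Lipschitz-drift setting, so the proof essentially reduces to one application of that theorem.
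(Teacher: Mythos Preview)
Your argument is correct. Both the paper and you start from the SDE representation of Proposition~\ref{pro:process_driving_tilted_measure} and the drift bounds \eqref{eqn:Zlnder}, but you then follow a genuinely different route. The paper argues at the level of generators: for non-decreasing $g\in C_b^2(\R)$ one has $L^{\underline v(\eta)}g\le L^{\zeta,\eta}g\le L^{\overline v(\eta)}g$, from which the inequalities between the semigroups are deduced via Kolmogorov's equation, and general bounded non-decreasing $g$ are then handled by monotone approximation and dominated convergence. You instead couple the three processes on a common probability space driven by the same Brownian motion and use the one-dimensional SDE comparison theorem to obtain the \emph{pathwise} ordering $\underline X_t\le X_t\le \overline X_t$; the claim for arbitrary bounded non-decreasing $g$ then follows in one line without any smoothing. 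In the present setting your version is in fact almost trivial, since the diffusion coefficient is $1$ and the extremal drifts are constants: for instance $X_t-\underline X_t=\int_0^t\big(b^{\zeta,\eta}(X_s)-\underline v(\eta)\big)\D s\ge 0$ directly from \eqref{eqn:Zlnder}, so one does not even need the full strength of the comparison theorem. Your approach thus yields a slightly stronger (pathwise) statement with less machinery, while the paper's semigroup argument is closer in spirit to the PDE tools used elsewhere in the article.
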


\begin{proof}
  By Proposition~\ref{pro:process_driving_tilted_measure}, the process
  $X_t$ driven by the tilted measure $P^{\zeta ,\eta }_{x_0}$ has
  generator
  $L^{\zeta,\eta} = \tfrac{1}{2} \Delta + b(x)\frac{\D}{\D x}$.
  Let further $L^{v} = \tfrac{1}{2} \Delta + v\frac{\D}{\D x}$ be the
  generator of the Brownian motion with drift $v$.
  Then, for any non-decreasing $g \in C^2_b(\R)$, it follows from
  \eqref{eqn:Zlnder} that
  \begin{equation*}
    L^{\underline{v}(\eta)}g \le L^{\zeta,\eta}g \le
    L^{\overline{v}(\eta)}g.
  \end{equation*}
  Since, by Kolmogorov's forward equation,
  $\frac {\D}{\D t} E_x^{\zeta ,\eta } [g(X_t)]
  = E_x^{\zeta ,\eta }[(L^{\zeta ,\eta }g)(X_t)]$
  and analogously for the measures
  $E_x^{\underline v}$ and $E_x^{\overline v}$, the statement of the
  lemma follows for any non-decreasing $g\in C_b^2(\mathbb R)$. The extension
  to arbitrary bounded non-decreasing functions $g$ follows by approximating $g$
  by a sequence of non-decreasing functions in $C^2_b(\R)$ and using the
  dominated convergence theorem.
\end{proof}

\section{Perturbations of the Feynman-Kac representation}
\label{sec:Perturbation_results_for_the_PAM}

In this section we provide a regularity type result for the Feynman-Kac
representation \eqref{eqn:Feynman-Kac_PAM} of solutions to the parabolic
Anderson model \eqref{eqn:PAM} with  initial conditions of Heaviside type.  A
variant of such results was developed in \cite{CDS2021,DS2022} (cf.
  Lemmas 3.11 and 3.13 from \cite{DS2022}, or Lemma 4.1 of
  \cite{CDS2021}) for the study of the fronts of \eqref{eqn:FKPP} and
\eqref{eqn:PAM}. In the current setting, the perturbation results will be
used together with the identity \eqref{eqn:FKPP_solution} and the Feynman-Kac
representation (Proposition~\ref{pro:Feynman-Kac}) in order to get bounds
on the solutions to~\eqref{eqn:FKPP} in the proof of the fundamental
Lemma~\ref{lem:contradiction} in
Section~\ref{sec:proofof_contradiction_lemma}.

To avoid the dependence of various constants appearing in these
perturbation results on the speed, we assume for the rest of the article
that the speeds we allow are contained in some arbitrary but fixed
compact interval $V \subset (v_c,\infty)$ which has $v_0$ in its interior
(recall that we require \eqref{eqn:VEL} to hold). As we can otherwise
choose $V$ arbitrarily large, this does not impose any further
restrictions for what follows.
With $\overline \eta (v)<0$ as in \eqref{eqn:lineeta}, we further fix a compact interval
$\triangle\subset(-\infty,0)$ such that
\begin{equation}
  \label{eqn:baretaub}
  \text{the set }\{\overline \eta (v):v\in V\}
  \text{ is contained in the interior of }\triangle.
\end{equation}
From now on, all newly introduced constants may depend implicitly on
the compact sets $V$ and $\triangle$ and on the bounds on the
environment $\es$ and $\ei$. Their dependence on other parameters is made
explicit when the constants are introduced. In order to facilitate
reading, for quantities that depend on the realisation of the environment
$\xi $ we use letters $\mathcal T$, $\mathcal N$, etc.

\begin{proposition}
  \phantomsection
  \label{prop:perturbations}
  \begin{enumerate}
    \item \label{timeperturbation}
    For every $\delta>0$ and $A>0$, there exist a constant
    $\Cl{c:timePert}= \Cr{c:timePert}(A,\delta ) \in (1,\infty)$ and a
    $\PP$-a.s.~finite random variable
    $\mathcal{T}_1= \mathcal T_1(A, \delta )$ such that for all
    $t\ge \mathcal{T}_1$, uniformly in $0\le h \le t^{1-\delta}$ and
    $x,y\in [-At,At]$ with $x<y$, $\frac{y-x}t \in V$ and
    $\frac{y-x}{t+h}\in V$,
    \begin{align*}
      E_{x}&\Big[e^{\int_0^{t + h}\xi(X_s)\D s} ;
        X_{t+h} \ge y \Big]
      \le \Cr{c:timePert}e^{\Cr{c:timePert} h}
      E_{x}\Big[e^{\int_0^{t}\xi(X_s)\D s} ; X_t \ge y \Big].
    \end{align*}

    \item \label{spaceperturbation}
    Let $\delta: (0,\infty) \to (0,\infty)$ be a function tending to $0$
    as $t  \to \infty$, and let $A>0$. Then there exists a constant
    $\Cl{c:spacePert} = \Cr{c:spacePert}(A,\delta ) \in (1,\infty)$ and a
    $\PP$-a.s.~finite random variable $\mathcal{T}_2$ such that for all
    $t \ge \mathcal{T}_2$, uniformly in $0\le h \le t\delta(t)$ and
    $x,y\in [-At,At]$ with $x<y$, $\frac{y-x}{t} \in V$ and
    $\frac{y+h-x}{t}\in V$,
    \begin{align*}
      E_{x}&\Big[e^{\int_0^{t}\xi(X_s)\D s} ;
        X_t \ge y+h \Big]\le
      \Cr{c:spacePert}e^{- h/\Cr{c:spacePert}}
      E_{x}\Big[ e^{\int_0^{t}\xi(X_s)\D s} ; X_t \ge y \Big].
    \end{align*}
  \end{enumerate}
\end{proposition}

The proof of this proposition involves comparing the Feynman-Kac
representation \eqref{eqn:Feynman-Kac_PAM} to functionals with respect to
the family of tilted probability measures that were presented in
Section~\ref{sec:tilted_measure}. It is a rather straightforward, but
lengthy adaptation of parts of the proofs of Lemma~3.11(b) of
\cite{DS2022} (which corresponds to part \ref{timeperturbation} of
  Proposition~\ref{prop:perturbations}) and Lemma~4.1(b) of
\cite{CDS2021} (corresponding to part \ref{spaceperturbation} of
  Proposition~\ref{prop:perturbations}). There are two key differences in
the statements of our Proposition~\ref{prop:perturbations} when compared
to the two respective statements in \cite{CDS2021,DS2022}, that need to
be addressed:

\begin{enumerate}[label=(\Alph*)]
  \item
  \label{dif:A}
  Proposition~\ref{prop:perturbations} requires that its estimates hold
  uniformly over the ``starting point'' $x$ and the ``target point'' $y$
  in an interval growing linearly with time $t$. In the original
  statements, the starting point satisfies $x=vt$ and the target point
  essentially always corresponds to the origin.

  \item
  \label{dif:B}
  Proposition~\ref{prop:perturbations}\ref{spaceperturbation} involves a
  perturbation by the end point (that is, $y$ changes to $y+h$), while
  the starting point is perturbed in the original statement.
\end{enumerate}

In addition, \cite{CDS2021,DS2022} always consider travelling waves
going ``from left to right'', while for our purposes it is more suitable to
work with waves going ``from right to left''. This difference is easily dealt
with by mirroring the environment and we do not discuss it further.

Proving Proposition~\ref{prop:perturbations} thus requires checking that
these two differences can be dealt with by the original arguments. In the
following we provide the arguments of the proof, and furthermore describe
key locations where the arguments of \cite{CDS2021,DS2022} have to be
adapted. For auxiliary results from the original sources
\cite{CDS2021,DS2022} that can be employed without any further
adaptation, we will just refer to the respective references.

\smallskip

We start by introducing certain logarithmic moment generating functions
that are featured heavily in the proof. For $\eta <0$ and $y>x$, let
\begin{align}
  \label{eqn:logMomGen}
  \begin{split}
    \overline L_{x,y}^{\zeta}(\eta)
    &:=\frac{1}{y-x}\ln E_{x}\Big[ \exp\Big\{ \int_0^{H_{y}}\big( \zeta(X_s)
          +\eta \big)\D s \Big\} \Big]
    = \frac{1}{y-x}\ln (Z_{x,y}^{\zeta ,\eta }) ,
    \\L(\eta)&:=\E\big[ \overline L_{0,1}^{\zeta}(\eta) \big].
  \end{split}
\end{align}
(Note that the function $L(\eta )$  coincides with that in
\eqref{eqn:gf}.)
In order to control these functions, a simple but recurrently used
generalisation of \cite[Lemma~A.1]{DS2022} is given by the following
statement.

\begin{claim}
  \phantomsection
  \label{cl:A1}
  \begin{enumerate}
    \item For every $x<y$, the functions $L$ and
    $\overline{L}_{x,y}^\zeta$ are infinitely differentiable on
    $(-\infty,0)$ with
    \begin{align}
      \label{eqn:derRep}
      \big( \overline{L}_{x,y}^\zeta \big)'(\eta)
      &= \frac{1}{y-x}E_{x}^{\zeta,\eta}\big[ H_y \big],
      \\ \label{eqn:derDerRep}
      (\overline L_{x,y}^{\zeta})''(\eta)&=
      \frac{1}{y-x}\var_{P^{\zeta, \eta }_x}(H_y)
    \end{align}
    (where $\var_{P^{\zeta, \eta }_x}$ denotes  the variance with respect
      to $P_x^{\zeta ,\eta }$ defined in \eqref{eqn:ext}).

    \item
    There exists a constant $\Cl{constDerLMG}$ such that,
    $\mathbb P$-a.s.,
    \begin{equation}
      -\Cr{constDerLMG}
      \leq \inf_{\eta\in\triangle,|x-y|\geq 1}
      \big\{ \overline{L}_{x,y}^\zeta(\eta),L(\eta)  \big\}
      \leq \sup_{\eta\in\triangle,|x-y|\geq 1}
      \big\{ \overline{L}_{x,y}^\zeta(\eta),L(\eta) \big\}
      \leq -\Cr{constDerLMG}^{-1},
    \end{equation}
    and, for $n\in \{1,2\}$, the $n$-th derivatives satisfy
    \begin{align}
      \label{eqn:LderBd}
      \begin{split}
        \Cr{constDerLMG}^{-1}
        &\leq \inf_{\eta\in\triangle,|x-y|\geq 1}
        \big\{
          (\overline{L}_{x,y}^\zeta)^{(n)}(\eta),L^{(n)}(\eta) \big\}
        \\&\leq \sup_{\eta\in\triangle,|x-y|\geq 1}
        \big\{
          (\overline{L}_{x,y}^\zeta)^{(n)}(\eta),L^{(n)}(\eta) \big\}
        \leq \Cr{constDerLMG}.
      \end{split}
    \end{align}
  \end{enumerate}
\end{claim}

Since upon replacing the index $x$ by indices $x,y$, the proof of this
claim follows that of \cite[Lemma A.1(b)]{DS2022} verbatim, we omit it
here. (In particular, it only uses the upper and lower bounds of the
  potential $\zeta$, and otherwise is independent of it.)

To prove Proposition~\ref{prop:perturbations}, we are now interested in
the (random) tilting parameter $\eta_{x,y}^\zeta(v)$ for which the mean
speed of a particle on its way from $x$ to $y$ under the tilted measure
$P_x^{\zeta ,\eta }$ is precisely $v$, that is
\begin{equation}
  \label{eqn:etazeta}
  E_x^{\zeta ,\eta_{x,y}^\zeta (v)}[H_y] = \frac {y-x}v,\qquad v>0, x<y.
\end{equation}
(If no such parameter exists, we set $\eta_{x,y}^\zeta (v) =0$.) The next
lemma shows that, for $y-x$ large, a unique $\eta_{x,y}^\zeta(v)$
satisfying \eqref{eqn:etazeta} exists with high probability and that it
is close to $\overline \eta(v) $ (recall \eqref{eqn:lineeta}). It is an
extension of Lemma~2.5 of \cite{DS2022} and it is the first step on the
way to dealing with the difference \ref{dif:A} in the above list.

\begin{lemma}
  \phantomsection
  \label{lem:etazeta}
  \begin{enumerate}
    \item \label{item:etazetaA} For every $A>1$ there exists a
    $\PP$-a.s.~finite random variable $\mathcal N = \mathcal N(A)$ such that
    for all $v\in V$ and $x<y\in \mathbb R$ such that $y-x\ge \mathcal N$
    and $|x|,|y|\le A(y-x)$, the solution $\eta_{x,y}^{\zeta }(v)$ to
    \eqref{eqn:etazeta} exists and satisfies
    $\eta_{x,y}^{\zeta }(v)\in \triangle$.

    \item \label{item:etazetaB}
    For each $q\in \mathbb N$ there exists a constant
    $\Cl{c:etaConc}=\Cr{c:etaConc}(q)\in (0,\infty)$
    such
    that for all $n\ge C_4$
    \begin{equation} \label{eqn:compBC}
      \PP\bigg(\sup_{v\in  V} \sup_{x\in [-n,-n+1]} \sup_{y\in [0,1]}
        |\eta_{x,y}^\zeta (v) - \overline \eta (v)|
        \ge \Cr{c:etaConc}\sqrt{\frac{\ln n}{n}}\bigg) \le \Cr{c:etaConc} n^{-q}.
    \end{equation}
  \end{enumerate}
\end{lemma}

\begin{proof}[Proof (outline)]
  The proof follows the strategy of \cite[Lemma 2.5]{DS2022}: Item
  \ref{item:etazetaA} follows directly from \ref{item:etazetaB}, using
  the Borel-Cantelli lemma and \eqref{eqn:baretaub}, with the help of an
  additional union bound to take care of the uniformity in $y$.

  For the proof of \ref{item:etazetaB}, we remark that in
  \cite[Lemma~2.5(b)]{DS2022} it was shown that for all $n \ge C$
  \begin{equation} \label{eqn:simpBC}
    \PP\bigg(\sup_{v\in V} \sup_{x\in [-n,-n+1]}
      |\eta_{x,0}^\zeta (v) - \overline \eta (v)|
      \ge C\sqrt{\frac{\ln n}{n}}\bigg) \le Cn^{-q}.
  \end{equation}
  Hence, in order to derive \eqref{eqn:compBC}, compared to
  \eqref{eqn:simpBC}, we now have to include the additional supremum over
  $y\in[0,1]$. We first observe that replacing $n$ by $n+1$ in
  \eqref{eqn:simpBC} and using the shift invariance of $\zeta $ implies
  that \eqref{eqn:simpBC} also holds when
  $\eta_{x,0}^\zeta (v)$ is replaced by $\eta_{x,1}^\zeta (v)$. By
  \eqref{eqn:derRep} and \eqref{eqn:etazeta},
  $\eta_{x,y}^\zeta (v)$ solves
  \begin{equation}
    \label{eqn:etainbarL}
    \overline L_{x,y}'(\eta_{x,y}^\zeta (v)) = v^{-1}.
  \end{equation}
  From \eqref{eqn:derRep} it follows that
  $y\mapsto (y-x) (\overline L_{x,y}^\zeta )'(\eta )$ is increasing.
  Hence, for $y\in [0,1]$,
  \begin{equation}
    \label{eqn:monoarg}
    (0-x) \overline L_{x,0}'(\eta_{x,y}^\zeta (v))
    \le (y-x)
    \overline L_{x,y}'(\eta_{x,y}^\zeta (v))
    \le
    (1-x)\overline L_{x,1}'(\eta_{x,y}^\zeta (v)).
  \end{equation}
  The first inequality and \eqref{eqn:etainbarL} then imply
  \begin{equation*}
    \overline L_{x,0}'(\eta_{x,y}^\zeta (v))
    \le \frac{y-x}{v(0-x)} \le
    \frac 1 v \Big(1+\frac {1}{|x|}\Big).
  \end{equation*}
  By \eqref{eqn:LderBd}, $\overline L_{x,0}'$ is increasing and, on
  $\triangle$, its derivative is bounded from below by
  $\Cr{constDerLMG}^{-1}$. Hence, on the event that
  $\eta_{x,0}^{\zeta }(v)$ is close to $\overline \eta (v)$ (which is
    typical for $n$ large, by \eqref{eqn:simpBC}), this implies
  \begin{equation*}
    \eta_{x,y}^\zeta (v) \le \eta_{x,0}^\zeta(v)  + \frac
    {\Cr{constDerLMG}}{v|x|}.
  \end{equation*}
  Similarly, the second inequality in \eqref{eqn:monoarg} implies
  \begin{equation*}
    \eta_{x,y}^\zeta (v) \ge \eta_{x,1}^\zeta(v)  - \frac
    {\Cr{constDerLMG}}{v|x|}.
  \end{equation*}
  Combining these two inequalities with \eqref{eqn:simpBC} (and its
    version for $\eta_{x,1}^\zeta (v)$, as observed above) then yields
  \eqref{eqn:compBC} with $\Cr{c:etaConc} = 2C$ for $n$ large, and by
  adjusting $\Cr{c:etaConc}$ for all $n \in \mathbb N$.
\end{proof}

We next adapt Lemma~2.7~of \cite{DS2022} which provides a
spatial perturbation result for the tilting parameters
$\eta_{x,y}^\zeta(v)$ introduced in \eqref{eqn:etazeta}.

\begin{lemma}
  \label{lem:etapert}
  For $A>1$ let $\mathcal N =\mathcal N(A)$ be as in
  Lemma~\ref{lem:etazeta}.
  There exists a constant
  $\Cl{c:diffEmpLogMomGenFct}$
  such that for all $x,y\in \mathbb R$
  with $y-x\ge \mathcal N$ and $|x|,|y|\le A(y-x)$, $v\in V$, and
  $h\in [1,y-x]$, we have
  \begin{equation}
    \big | \eta_{x,y}^{\zeta }(v)- \eta_{x,y+h}^\zeta (v)\big|
    \le  \frac {\Cr{c:diffEmpLogMomGenFct}h}{y-x}.
  \end{equation}
\end{lemma}

\begin{proof}[Proof (outline)]
  The strategy for proving Lemma \ref{lem:etapert} is as follows: By
  Lemma~\ref{lem:etazeta},
  $\eta_{x,y}^\zeta(v), \eta_{x,y+h}^\zeta (v)\in\triangle$ for all
  $x,y,v,h$ as in the assumptions.  In particular, this means that
  $\eta_{x,y}^\zeta(v)$ and $ \eta_{x,y+h}^\zeta (v)$ are implicitly
  defined via \eqref{eqn:etazeta} or, equivalently,
  \eqref{eqn:etainbarL}. Therefore, the proof then proceeds by showing
  that there exists $\Cl{c:pert}$ such that
  \begin{equation}
    \label{eqn:eta_nPert}
    \sup_{\eta\in\triangle} \big| \big( \overline{L}_{x,y+h}^\zeta  \big)'(\eta)
    - \big( \overline{L}_{x,y}^\zeta  \big)'(\eta) \big|
    \leq \Cr{c:pert}\frac{h}{y-x},
  \end{equation}
  which will finish the proof, since again one can use the regularity of
  the functions $\overline{L}_{x,y}^\zeta$ and their derivatives to
  deduce the closeness of their arguments from their function values
  being close.

  It remains to establish \eqref{eqn:eta_nPert}. For this purpose note
  that using \eqref{eqn:derRep} and the strong Markov property at time
  $H_y$, one rewrites
  \begin{align*}
     \big( \overline{L}_{x,y+h}^\zeta  \big)'(\eta)
    - \big( \overline{L}_{x,y}^\zeta  \big)'(\eta)
    &=-\frac{h}{y-x+h}\big( \overline{L}_{x,y}^\zeta  \big)'(\eta)
    + \frac{h}{y-x+h} \big(\overline L_{y,y+h}^\zeta\big)'(\eta ).
  \end{align*}
  Claim \eqref{eqn:eta_nPert} then readily follows using the  bounds for
  $(\overline{L}_{x,y}^\zeta  )'$ from \eqref{eqn:LderBd}.
\end{proof}

Next, we introduce two auxiliary processes which we later relate to the
expressions appearing in Proposition~\ref{prop:perturbations}. We
consider, for $x <  y\in \mathbb R$ and $v>0$, the quantities
\begin{equation}
  \begin{split}
    Y^\approx_v(x,y) &:= E_x \Big[ e^{\int_0^{H_y}\zeta(X_s) \D s};
      H_y\in \Big[ \frac{y-x}{v} - K ,\frac{y-x}v\Big]\Big],
    \\Y^>_v(x,y) &:= E_x \Big[ e^{\int_0^{H_y}\zeta(X_s) \D s};
      H_y<\frac{y-x}{v} - K\Big],
  \end{split}
\end{equation}
where $K>0$ is a large constant that will be fixed later on in
\eqref{eqn:K} below. For this suitable choice of $K$ the quantities
$Y^{\approx}_v(x,y)$ and $Y^<_{v}(x,y)$ are comparable uniformly in the
admissible choices of $x$ and $y$. This result is a partial extension of
\cite[Proposition 3.5]{DS2022}.

\begin{lemma}
  \label{lem:Ycomparability}
  For $A>1$, let $\mathcal N =\mathcal N(A)$ be as in
  Lemma~\ref{lem:etazeta}. Then there exists a constant
  $\Cl{c:approxSmall} $
  such that for all $v\in V$ and all $x<y\in \mathbb R$ such that
  $y-x\ge \mathcal N$ as well as $|x|,|y|\le A(y-x)$, we have
  \begin{equation}
    \frac {Y^{\approx}_v(x,y)}{Y^{<}_v(x,y)}
    \in [\Cr{c:approxSmall}^{-1},\Cr{c:approxSmall}].
  \end{equation}
\end{lemma}

\begin{proof}
  The proof of this lemma contains a computation that is essential for a
  step in the proof of Lemma~\ref{lem:contradiction}, and is also
  featured in Section~\ref{sec:proofof_contradiction_lemma} below. We
  assume that $x,y$ satisfy the assumptions of the lemma, and, in order
  to keep the notation simple, we in addition assume that
  $x,y\in \mathbb Z$ (we refer to ~\cite[Section~1.9]{DS2022} for the
    technical issue and notation of how to deal with the case of
    non-integer $x$ and $y$). We write $\eta^*: = \eta_{x,y}^{\zeta }(v)$
  for conciseness of notation in the following and define
  \begin{equation}
    \sigma^* = \sigma_{x,y}^\zeta (v) := |\eta^* |
    \sqrt {\var_{P^{\zeta, \eta^* }_x}(H_y)}.
  \end{equation}
  We observe that $\mathbb P$-a.s., uniformly in $v\in V$,
  \begin{equation}
    \label{eqn:sigmaas}
    \Cl{c:varconst}\sqrt{y-x}
    \le \sigma_{x,y}^{\zeta  }(v)  \le \Cr{c:varconst}\sqrt {y-x}.
  \end{equation}
  Indeed, this follows directly from \eqref{eqn:derDerRep} and
  \eqref{eqn:LderBd}, with $\Cr{c:varconst}$ depending only on
  $\Cr{constDerLMG}$ and $\triangle$.

  Let further $\tau_z = H_{z}-H_{z-1}$, $z\in [x+1,y]\cap \mathbb Z$, and
  let $\widehat {\tau}_z := \tau_z-  E_x^{\zeta ,\eta^* }[\tau_z]$. Then,
  by the definition of $\eta^*$, for $x,y$ satisfying the assumptions of
  Lemma \ref{lem:etazeta}, we have
  $E_x^{\zeta ,\eta^* }[H_y]=\frac{y-x}{v}$. With this notation, a
  straightforward computation yields
  \begin{equation}
    \begin{split}
      Y_{v}^\approx(x,y)&=
      E_x\Big[ e^{ \int_{0}^{H_{y}}(\zeta(X_s)+\eta^*)\D s }
        \, e^{ -\eta^*\sum_{z=x+1}^{y}\widehat{\tau}_{z} };
        \, \sum_{i=x+1}^{y}\widehat{\tau}_{z}\in[ -K,0 ] \Big]
      e^{ -(y-x)\eta^*/v}
      \\ &=
      E_x^{\zeta,\eta^*}\bigg[
        e^{ -\sigma^* \frac{\eta^*}{\sigma^*}\sum_{z=x+1}^y\widehat{\tau}_{z} };
        \,  \frac{\eta^*}{\sigma^*}\sum_{z=x+1}^y\widehat{\tau}_{z}\in
        \Big[ 0,-\frac{K\eta^*}{\sigma^*} \Big] \bigg]
      e^{ -(y-x)( \frac{\eta^*}{v} - \overline{L}_{x,y}^\zeta(\eta^*)) }.
    \end{split}
  \end{equation}
  Defining $\mu_{x,y}^{\zeta ,\eta^* }$ to be the distribution of
  $\frac{\eta^*}{\sigma^*}\sum_{z=x+1}^y \widehat{\tau}_z$ under
  $P_x^{\zeta ,\eta^* }$, this implies
  \begin{equation}
    \label{eqn:Yap}
    Y_v^\approx(x,y)
    =e^{-(y-x)( \frac{\eta^*}{v} - \overline{L}_{x,y}^\zeta(\eta^*) )}
    \int_0^{\frac{-K\eta^*}{\sigma^*}}
    e^{-\sigma^* u} \mu_{x,y}^{\zeta,\eta^* }(\D u).
  \end{equation}
  A completely analogous computation then shows that
  \begin{equation}
    \label{eqn:Yle}
    Y_{v}^<(x,y)
    =e^{-(y-x)( \frac{\eta^*}{v} - \overline{L}_{x,y}^\zeta(\eta^*) )}
    \int_{\frac{-K\eta^*}{\sigma^*}}^{\infty}
    e^{-\sigma^* u} \mu_{x,y}^{\zeta,\eta^* }(\D u).
  \end{equation}

  The upshot of these computations is that under $P_{x}^{\zeta ,\eta^* }$,
  the random variables $\widehat {\tau}_z$, $z=x+1,\dots,y$, are centred,
  independent, have uniform exponential moments, and
  $\mu_{x,y}^{\zeta ,\eta^* }$ has unit variance.
  As a consequence,
  we can  uniformly approximate $\mu_{x,y}^{\zeta ,\eta^* }$ by the
  standard Gaussian measure $\Phi$, and the integrals appearing on the
  right-hand side of \eqref{eqn:Yap} and \eqref{eqn:Yle} are $\PP$-a.s.~both
  of order $(y-x)^{-1/2}$, uniformly in the $\zeta $  and $v\in V$ under
  consideration, and  for all $x,y$ satisfying the assumptions of
  Lemma~\ref{lem:etazeta}. More precisely, the conditions of the error
  estimate \cite[Theorem~13.3]{Bhattacharya1986nonrmal} for normal
  approximations are satisfied and we can hence
  apply~\cite[(13.43)]{Bhattacharya1986nonrmal} to deduce that
  \begin{equation}
    \label{eqn:normalapp}
    \sup_{\mathcal{C}}\big|
    \mu_{x,y}^{\zeta,\eta^*}(\mathcal C) - \Phi( \mathcal C)\big|
    \le \Cl{c:normErr} (y-x)^{-1/2},
  \end{equation}
  where the supremum is over all intervals in $\R$ and the constant
  $\Cr{c:normErr}>0$ only depends on the uniform bound of the exponential
  moments of the $\widehat \tau_z$'s. Now, due to \eqref{eqn:sigmaas}, we
  can choose $K>0$ large enough so that for some constants
  $\Cr{c:normErr}<\Cl{c:LBnormErr}= \Cr{c:LBnormErr}(K)
  <\Cl{c:UBnormErr} = \Cr{c:UBnormErr}(K)$,
  for all $x,y$ with $y-x\ge \mathcal N$ and $v\in V$ we have
 \begin{equation}
    \label{eqn:K}
    \Cr{c:LBnormErr} (y-x)^{-1/2} \le
    \Phi\big( [0,-K\eta^*/\sigma] \big) \le \Cr{c:UBnormErr} (y-x)^{-1/2},
  \end{equation}
  and thus infer
  \begin{equation*}
    (\Cr{c:LBnormErr}-\Cr{c:normErr})(y-x)^{-1/2}
    \le \mu_{x,y}^{\zeta, \eta^*}([0,-K\eta^*/\sigma])
    \le (\Cr{c:UBnormErr}+\Cr{c:normErr}) (y-x)^{-1/2}.
  \end{equation*}
  Plugging these estimates into \eqref{eqn:Yap} and \eqref{eqn:Yle}, we
  can then show that the integrals in these displays are both of the same
  order $(y-x)^{-1/2}$, following exactly the same steps as in
  \cite[Lemma~3.6]{DS2022}.
\end{proof}

Lemma~\ref{lem:Ycomparability} has an important corollary allowing to
approximate the Feynman-Kac formula for the PAM
(cf.~\eqref{eqn:Feynman-Kac_PAM}) by expressions involving
$Y^\approx_{v}(x,y)$. This approximation will also be used  in
Section~\ref{sec:proofof_contradiction_lemma} below. Its proof is
inspired by that of Lemma 3.7 in \cite{DS2022}.

\begin{lemma}
  \label{lem:YaproxFK}
  For each $A >1$, with $\mathcal N = \mathcal N(A)$ as in
  Lemma~\ref{lem:etazeta}, there exists a constant
  $\Cl{c:approxCst} = \Cr{c:approxCst}(K)\in (1,\infty)$
  such that for all $t\in \mathbb (0,\infty)$ and all $x<y\in \mathbb R$
  such that $y-x\ge \mathcal N$, $|x|,|y|\le A(y-x)$ and
  $\frac{y-x}{t}\in V$,
  \begin{equation} \label{eqn:approxEst}
    \Cr{c:approxCst}^{-1} Y^\approx_{v}(x,y)
    \le E_x\big[e^{\int_0^t \zeta (X_s)\D s}; X_t
      \ge y\big]
    \le \Cr{c:approxCst} Y^\approx_{v}(x,y).
  \end{equation}
\end{lemma}

\begin{proof}
  Since for $X_t$ starting in $x$ we have
  $\{X_{t}\geq y\}\subset\{ H_y\leq t \}$ and $\zeta\leq0$, we get
  $E_{x}\big[ e^{\int_0^{t}\zeta(X_s)\D s}; X_{t}\leq y \big]
  \leq (1+\Cr{c:approxSmall})Y_v^\approx(x,y)$
  by Lemma~\ref{lem:Ycomparability} and thus the last inequality in
  \eqref{eqn:approxEst} is obtained.

  To show the first inequality, we define for $\delta > 0$ and
  $y\in \mathbb R$ the random functions
  \begin{equation*}
    p_y(s):=E_y\big[ e^{\int_0^s\zeta(X_r)\D r};X_s\in[y,y+\delta ] \big]
  \end{equation*}
  which, almost surely, for all $s\in[0,K]$, are bounded from below by a deterministic constant
  $\Cl[small]{c:LBh} = \Cr{c:LBh}(K,\delta)\in (0,1)$ .
  Using the strong Markov property at $H_y$, we finally get
  \begin{align*}
    Y_v^\approx(x,y)
    &= E_{x}\left[ e^{\int_0^{H_y}\zeta(X_r) \D r};
      H_y\in\left[ t-K,t \right] \right] \\
    &\leq \Cr{c:LBh}^{-1}
    E_x\left[ e^{\int_0^{H_y}\zeta(X_r)  \D r}
      p_y(t-H_y); H_y\in\left[ t-K,t \right] \right] \\
    &\leq  \Cr{c:LBh}^{-1} E_{x}\left[ e^{\int_0^{H_y}\zeta(X_r)
        \D r}p_y(t-H_y)\right]\\
    &=\Cr{c:LBh}^{-1}
    E_{x}\left[ e^{\int_0^{t}\zeta(X_r)\D r};X_{t}\in[y,y+\delta ] \right].
  \end{align*}
  and the claim follows by choosing
  $\Cr{c:approxCst}:=\Cr{c:LBh}^{-1}\vee (1+\Cr{c:approxSmall})$.
\end{proof}

With this, we have made all the necessary extensions to the results in
\cite{CDS2021,DS2022} which are needed in order to accommodate for the
differences outlined in \ref{dif:A} and \ref{dif:B} and are equipped to
show Proposition~\ref{prop:perturbations}.

\begin{proof}[Proof of Proposition~\ref{prop:perturbations}]
  We denote $v:=(y-x)/t$, $v':=(y-x)/(t+h)$ and observe that by
  Lemma~\ref{lem:YaproxFK}, for $x$, $y$, $t$ and $h$ as in the
  statement, and by choosing $\mathcal T_1$ sufficiently large so that
  $y-x\ge \mathcal N$,
  \begin{equation}
    \label{eqn:AA0}
    \frac{E_{x}\Big[e^{\int_0^{t + h}\xi(X_s)\D s} ;
        X_{t+h} \ge y \Big]}
    { E_{x}\Big[e^{\int_0^{t}\xi(X_s)\D s} ;
        X_t \ge y \Big]} \le \Cr{c:approxCst}^2
    \frac{Y^{\approx}_{v'}(x,y)}{Y^{\approx}_{v}(x,y)}.
  \end{equation}
  The fraction on the right-hand side can be rewritten with the help of
  \eqref{eqn:Yap}. Using also the fact that the integral appearing in
  \eqref{eqn:Yap} is of order $(y-x)^{-1/2}$ uniformly in $v\in V$ and
  $y-x\ge \mathcal N$, as explained at the end of
  the proof of Lemma~\ref{lem:Ycomparability}, we obtain
  \begin{equation}
    \label{eqn:AA1}
    \frac{Y^{\approx}_{v'}(x,y)}{Y^{\approx}_{v}(x,y)}
    \le
    \Cl{c:apprRat}  \frac{\exp\big\{-(y-x)\big( \frac{\eta_{x,y}^\zeta(v') }{v'}
          - \overline{L}_{x,y}^\zeta(\eta_{x,y}^\zeta(v'))
          \big)\big\}}
    {\exp\big\{-(y-x)\big( \frac{\eta_{x,y}^\zeta(v) }{v}
          - \overline{L}_{x,y}^\zeta(\eta_{x,y}^\zeta(v)) \big)\big\}},
  \end{equation}
  for some finite constant $\Cr{c:apprRat} = \Cr{c:apprRat}(K)$.
  Denoting for any $\eta<0$
  \begin{equation}
    \label{eqn:Sdef}
    S_{x,y}^{\zeta ,v}(\eta ) := (y-x)\Big( \frac{\eta }{v}
      - \overline{L}_{x,y}^\zeta(\eta) \Big),
  \end{equation}
  the logarithm of the fraction on the right-hand side of \eqref{eqn:AA1}
  can be written as
  \begin{equation}
    \label{eqn:AA2}
    \big(S_{x,y}^{\zeta,v}(\eta_{x,y}^{\zeta}(v)) -
      S_{x,y}^{\zeta,v}(\eta_{x,y}^{\zeta}(v'))   \big)
    + \big(  S_{x,y}^{\zeta,v}(\eta_{x,y}^{\zeta}(v')) -
      S_{x,y}^{\zeta,v'}(\eta_{x,y}^{\zeta}(v'))\big)
  \end{equation}
  Recalling the definitions of $v$ and $v'$,
  the second summand in \eqref{eqn:AA2} satisfies
  \begin{equation}
    \big(  S_{x,y}^{\zeta,v}(\eta_{x,y}^{\zeta}(v')) -
      S_{x,y}^{\zeta,v'}(\eta_{x,y}^{\zeta}(v'))\big)
    = - h \eta_{x,y}^{\zeta}(v') \le \Cl{c:etaBd} h,
  \end{equation}
  for some finite constant
  $\Cr{c:etaBd}$, since
  $\eta_{x,y}^{\zeta}(v')\in \triangle$
  for the $x$, $y$, $v'$ under consideration due to
  Lemma~\ref{lem:etazeta}\ref{item:etazetaA}. Moreover, the absolute
  value of the first summand in \eqref{eqn:AA2} can be upper bounded by
  $ch^2/t\ll h$ uniformly for $x$, $y$, $t$ and $h$ under consideration,
  exactly as in the paragraph containing \cite[(3.39)]{DS2022} (this
    proof uses again only estimates that are uniform in $\zeta$). This
  completes the proof of part \ref{timeperturbation}.

  The proof of part \ref{spaceperturbation} follows the lines of the
  proof of the spatial perturbation result, Lemma~4.1 in \cite{CDS2021}.
  Indeed, using the same reasoning as in
  \eqref{eqn:AA0}--\eqref{eqn:AA2}, now choosing $v:=(y-x)/t$ and
  $v' := (y+h-x)/t$, where $x,y,t$ and $h$ are as in the statement and
  $\mathcal T_2$ is assumed to be sufficiently large so that
  $y-x\ge \mathcal N $,  we infer that
  \begin{equation}
    \label{eqn:BB1}
    \frac{E_{x}\Big[e^{\int_0^{t}\xi(X_s)\D s} ;
        X_t \ge y+h \Big]}
    { E_{x}\Big[ e^{\int_0^{t}\xi(X_s)\D s} ;
        X_t \ge y \Big]}
    \le \Cl{c:spatPertUB} \frac{Y^{\approx}_{v'}(x,y+h)}{Y^{\approx}_{v}(x,y)},
  \end{equation}
  as well as
  \begin{equation}
    \label{eqn:BB2}
    \begin{split}
      \ln  \frac{Y^{\approx}_{v'}(x,y+h)}{Y^{\approx}_{v}(x,y)}
      &\le \big(S_{x,y}^{\zeta ,v}(\eta_{x,y}^{\zeta }(v)) -
        S_{x,y+h}^{\zeta ,v'}(\eta_{x,y}^{\zeta }(v))\big)
      \\&\quad+
      \big(S_{x,y+h}^{\zeta ,v'}(\eta_{x,y}^{\zeta }(v))
        -S_{x,y+h}^{\zeta ,v'}(\eta_{x,y+h}^{\zeta }(v'))\big).
    \end{split}
  \end{equation}
  By \eqref{eqn:logMomGen} and \eqref{eqn:Sdef}, the first summand on the
  right-hand side of \eqref{eqn:BB2} (which differs slightly from the
    corresponding one in \cite{CDS2021}, due to item
    \ref{dif:B} above) satisfies
  \begin{equation}
    \begin{split}
      \Big|S_{x,y}^{\zeta ,v}&(\eta_{x,y}^{\zeta }(v)) -
      S_{x,y+h}^{\zeta ,v'}(\eta_{x,y}^{\zeta }(v))\Big|
      \\&= \Big|\ln E_{x} \Big[
        e^{\int_0^{H_{y+h}}(\zeta (X_s)+\eta_{x,y}^\zeta (v))\D s}\Big]
      - \ln E_{x} \Big[
        e^{\int_0^{H_y}(\zeta (X_s)+\eta_{x,y}^\zeta (v) )\D s}\Big]\Big|
      \\&= \Big|\ln E_{y} \Big[
        e^{\int_0^{H_{y+h}}(\zeta (X_s)+\eta_{x,y}^\zeta (v))\D s}\Big]\Big|
      \\&\le  h \sqrt {2(\es - \ei + | \eta_{x,y}^\zeta (v) |)} \le  \Cl{c:penultimate} h,
    \end{split}
  \end{equation}
  where, in the second equality, we applied the strong Markov property at
  time $H_y$, and used  \eqref{eqn:ppp} for the final inequality, and
  $\Cr{c:penultimate} = \Cr{c:penultimate}(V,\triangle, \ei, \es)$ some
  finite constant.

  The second summand on the right-hand side of \eqref{eqn:BB2} is upper
  bounded by $C h^2/t\ll h$ and is thus negligible. This can be proved
  exactly as in \cite[(4.13)--(4.16)]{CDS2021}. Besides
  \cite[Lemma~2.7]{DS2022}, which we already extended in
  Lemma~\ref{lem:etapert}, this proof again only uses uniform estimates
  and thus does not require any modification. This completes the proof of
  the proposition.
\end{proof}

\section{Dependence of solutions to the F-KPP equation on the initial condition}
\label{sec:proofof_contradiction_lemma}

In this section, we prove the key technical lemma,
Lemma~\ref{lem:contradiction} below, which formalises inequalities
\eqref{eqn:tightintrocc} and \eqref{eqn:tightintrodd}, and which provides
the right ordering of two solutions to \eqref{eqn:FKPP} with different
initial conditions. Its proof is based on a careful examination of the
Feynman-Kac representations of these solutions, using all the tools that
were introduced in previous sections.

To state the lemma, we introduce two auxiliary velocities,
\begin{align}
  \label{eqn:definition_of_vone}
  v_1 &:= \sqrt{2 (\es + 1)} \quad \text{and}\\
  \label{eqn:definition_of_vtwo}
  v_2 &: = \inf\{v> v_1+1: |\overline \eta (v)| \ge 2 v_1^2 +2\},
\end{align}
where $\overline \eta (v)$ was defined above \eqref{eqn:lineeta}; note
that display \eqref{eqn:lineeta} also ensures that $v_2$ is finite.
By comparing the BBMRE with the BBM with constant
branching rate $\es$, for which the speed of the maximum is
$\sqrt {2 \es}$, we obtain
\begin{equation*}
  v_0< v_1 < v_2.
\end{equation*}
Recall the notation $w^y$ from below \eqref{eqn:dualityintro}.

\begin{lemma}
  \label{lem:contradiction}
  For each $u>0$ and each $v>v_2$, there exists
  $\Delta_0 = \Delta_0(u,v)>0$ and a $\PP$-a.s.~finite random variable
  $\mathcal{T} = \mathcal  T(u,v)$, such that $\mathbb P$-a.s., for all
  $\Delta>\Delta_0$, $y\in[0,vt]$ and $t\ge \mathcal{T}$,
    \begin{equation}
      \label{eqn:Lemma_contradiction}
      w^{y}(t,y-vt) \ge w^{y+\Delta}(t+u,y-vt).
    \end{equation}
\end{lemma}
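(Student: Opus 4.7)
The plan is to bound both sides of \eqref{eqn:Lemma_contradiction} in terms of a common PAM-type quantity
\[
U_t(y,v) := E_{y-vt}\bigl[e^{\int_0^t \xi(X_s)\D s};\, X_t \ge y\bigr],
\]
showing that $w^{y+\Delta}(t+u, y-vt)$ is bounded above by $U_t(y,v)$ times a prefactor exponentially small in $\Delta$, while $w^y(t, y-vt)$ is bounded below by $U_t(y,v)$ times a positive deterministic constant. The upper bound is relatively routine: since $\widetilde F \le 1$, the FKPP Feynman--Kac formula \eqref{eqn:Feynman-Kac_FKPP} gives $w^{y+\Delta}(t+u, y-vt) \le E_{y-vt}[e^{\int_0^{t+u}\xi(X_s)\D s};\, X_{t+u}\ge y+\Delta]$. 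Applying Lemma~\ref{lem:perturbations}(a) with $h=u$ and then Lemma~\ref{lem:perturbations}(b) with $h=\Delta$ yields
\[
w^{y+\Delta}(t+u, y-vt) \le c_1 c_2\, e^{c_1 u}\, e^{-\Delta/c_2}\, U_t(y, v);
\]
the velocity conditions are straightforward to check for $t$ large enough after enlarging $V$ to contain a neighbourhood of $v$, and since the right-hand side of \eqref{eqn:Lemma_contradiction} is non-increasing in $\Delta$, it is enough to prove the inequality for a single $\Delta \in [\Delta_0, \Delta_0+1]$, which bypasses the size constraint $h \le t\delta(t)$ imposed by Lemma~\ref{lem:perturbations}(b).

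The heart of the proof, and the main obstacle, is the matching lower bound $w^y(t, y-vt) \ge C\, U_t(y,v)$ for some deterministic $C>0$, uniform in $y\in[0,vt]$ and all sufficiently large $t$. Combining the FKPP Feynman--Kac representation with the identity $1-\widetilde F(w) = O(w)$ as $w\to 0^+$ (a consequence of \eqref{eqn:Fderivatives}), this reduces to proving that $\int_0^t \xi(X_r)\, w^y(t-r, X_r)\D r$ is bounded by an absolute constant on a path event $\mathcal G$ whose Feynman--Kac-weighted mass $E_{y-vt}[e^{\int_0^t \xi(X_s)\D s};\, \mathcal G,\, X_t \ge y]$ is a positive fraction of $U_t(y, v)$. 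The natural framework for this is the tilted measure $P_{y-vt}^{\zeta, \bar\eta(v)}$ of Section~\ref{sec:tilted_measure}, for which (up to deterministic multiplicative constants) the exponential weight $e^{\int_0^t \xi}$ is absorbed into the probability; the task then becomes to exhibit a $\mathcal G$ with tilted probability bounded below and on which $w^y(t-r, X_r)$ decays exponentially in $t-r$.

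The specific choice $v > v_2$ in \eqref{eqn:definition_of_vtwo} is engineered precisely for this: it forces $|\bar\eta(v)| \ge 2 v_1^2 + 2$, whence $\underline v(\bar\eta(v)) = \sqrt{2|\bar\eta(v)|} > 2 v_1 > 2 v_0$. By Proposition~\ref{pro:process_driving_tilted_measure} and the pathwise coupling underlying Lemma~\ref{lem:comparison_lemma}, the tilted process $X$ is then pathwise bounded below by a Brownian motion with drift $2 v_1$ started at $y-vt$. I would take $\mathcal G$ to be a tube-like event forcing $X_r$ to lie in a window of width $O(\sqrt{t})$ around the straight line from $y-vt$ to $y$, together with $X_t \ge y$; Lemma~\ref{lem:comparison_lemma} and the stability of such tubes under drift perturbations imply that $\mathcal G$ has tilted probability bounded below uniformly in $y$ and $t$. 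On $\mathcal G$, the residual speed $(y - X_r)/(t-r)$ stays above $v_1 > v_0$ for all $r$ not too close to $t$, so the PAM many-to-one upper bound on $w^y(t-r, X_r)$ combined with standard large-deviation estimates for the front (cf.~\cite{DS2021}) yields $w^y(t-r, X_r) \le e^{-c(t-r)}$ for some $c>0$. Together with the boundedness of $\xi$, this makes the nonlinear-correction integral uniformly bounded on $\mathcal G$ and delivers the desired lower bound.

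Combining the two estimates, \eqref{eqn:Lemma_contradiction} follows as soon as $c_1 c_2\, e^{c_1 u}\, e^{-\Delta/c_2} \le C$, which gives the explicit value $\Delta_0(u,v) := c_2 \log(c_1 c_2\, e^{c_1 u}/C)$. The most delicate quantitative point, which I expect to absorb most of the subsequent section, is to make the exponential decay of $w^y$ on $\mathcal G$ and the lower bound on the tilted probability of $\mathcal G$ both uniform in $y \in [0, vt]$ and in the realisation of $\xi$; this is exactly the role that the rather particular size condition $|\bar\eta(v)|\ge 2v_1^2 + 2$ in \eqref{eqn:definition_of_vtwo} is designed to play.
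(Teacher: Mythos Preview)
Your overall architecture matches the paper's: upper-bound the right-hand side by the PAM quantity $U_t(y,v)$ via $\widetilde F\le 1$ and the perturbation Lemma~\ref{lem:perturbations}, lower-bound the left-hand side by restricting the Feynman--Kac integral to a good event $\mathcal G$ on which $\int_0^t \xi(X_r)\,w^y(t-r,X_r)\,\D r$ is controlled, and then argue that restricting to $\mathcal G$ costs only a constant in the weighted expectation. Your monotonicity-in-$\Delta$ observation is correct and neatly sidesteps the size constraint in Lemma~\ref{lem:perturbations}\ref{spaceperturbation}.

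The genuine gap is in the choice of $\mathcal G$. With a tube of width $O(\sqrt t)$ around the straight line of slope $v$, the inequality $X_r\le y-v(t-r)+C\sqrt t$ only forces $(y-X_r)/(t-r)\ge v_1$ once $t-r\ge C\sqrt t/(v-v_1)$. On the remaining window $r\in[t-c\sqrt t,\,t]$ you have no better bound than $w^y\le 1$, so the nonlinear-correction integral picks up a term of order $\sqrt t$, and your lower bound on $w^y(t,y-vt)$ becomes $e^{-c'\sqrt t}\,U_t(y,v)$ rather than a constant multiple of $U_t(y,v)$. That would force $\Delta_0$ to grow with $t$, which the lemma forbids.

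The paper avoids this by replacing the two-sided tube with a one-sided \emph{moving barrier}: set $\beta_{y,t}(s)=y-v_1(t-s)$ and take $\mathcal G=\{\mathcal T_{y,t}\ge t-K\}$ for a fixed constant $K$, where $\mathcal T_{y,t}$ is the first time $X$ hits $\beta_{y,t}$. On this event $X_s<y-v_1(t-s)$ for all $s\le t-K$, and the elementary bound $w^y(t-s,X_s)\le e^{\es(t-s)}P_0(\widetilde X_{t-s}\ge v_1(t-s))\le e^{-(v_1^2/2-\es)(t-s)}=e^{-(t-s)}$ (which is precisely why $v_1=\sqrt{2(\es+1)}$) makes $\int_0^{t-K}w^y\,\D s\le 1$, with the last $K$ units contributing at most $K$. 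The step you summarise as ``$\mathcal G$ has tilted probability bounded below'' is the paper's Lemma~\ref{lem:Claim}, and its proof is more delicate than a direct tube estimate: one first passes from the fixed-time event $\{X_t\ge y\}$ to $\{H_y\le t\}$, and then shows (Lemmas~\ref{lem:Step_3}--\ref{lem:Step_4}) that under a suitably tuned tilt, trajectories that hit the barrier before $t-K$ overwhelmingly satisfy $H_y<t-L$, which is itself atypical. The size condition $|\bar\eta(v)|\ge 2v_1^2+2$ enters through the drift lower bound needed in Lemma~\ref{lem:Step_3}, not through tube stability.
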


\begin{proof}
  We start by upper bounding the right-hand side of
  \eqref{eqn:Lemma_contradiction}. Again by the Feynman-Kac
  representation \eqref{eqn:Feynman-Kac_FKPP} and the fact that
  $\sup_{w \in [0,1]} \widetilde{F}(w) =1$,
  cf.~Proposition~\ref{pro:Feynman-Kac}, it follows for any $\Delta>0$ that
  \begin{align}
    \begin{split}
      \label{eqn:upper_bound_second_term_1}
      w^{y+\Delta}(t+u,y-vt) &=
      E_{y-vt}\Big[e^{\int_0^{t+u}\xi(X_s)\widetilde{F}(w^{y+\Delta}(t+u-s,X_s))\D s}
        ;X_{t+u}\ge y+\Delta \Big] \\
      &\le E_{y-vt}\Big[e^{\int_0^{t+u}\xi(X_s)\D s} ; X_{t+u}\ge y+\Delta \Big].
    \end{split}
  \end{align}
  To the right-hand side of \eqref{eqn:upper_bound_second_term_1} we now
  successively apply both parts of the perturbation result of
  Proposition~\ref{prop:perturbations} (with $V$ sufficiently large, as
    explained before Proposition~\ref{prop:perturbations} and $A=2v$). In
  order to apply them, we let
  $t\ge u\vee \mathcal{T}_1\vee \mathcal{T}_2 =:\mathcal{T}$, where
  $\mathcal{T}_1, \mathcal{T}_2$ are the $\PP$-a.s.~finite random
  variables appearing in the statement of the perturbation lemma. For such
  $t$, we then obtain
  \begin{align}
    \label{eqn:upper_bound_second_term}
    \begin{split}
      w^{y+\Delta}(t+u,y-vt)
      & \le
      \Cr{c:timePert}e^{ \Cr{c:timePert}u}
      E_{y-vt}\Big[e^{\int_0^{t}\xi(X_s)\D s} ; X_{t}\ge y+\Delta \Big]
      \\ &\le \Cr{c:timePert}\Cr{c:spacePert}
      e^{\Cr{c:timePert}u-\Delta/\Cr{c:spacePert}}
      E_{y-vt}\Big[e^{\int_0^{t}\xi(X_s)\D s} ; X_{t}\ge y
        \Big],
    \end{split}
  \end{align}
  which is our first intermediate inequality.

  Let us now turn our focus to bounding the left-hand side of
  \eqref{eqn:Lemma_contradiction} from below. By the Feynman-Kac
  representation \eqref{eqn:Feynman-Kac_FKPP},
  \begin{equation}
    \label{eqn:oiu}
    w^{y}(t,y-vt)   = E_{y-vt} \Big[ \exp\Big\{\int_0^t\xi(X_r)
        \widetilde{F}(w^y(t-r,X_r))\D r\Big\}; X_t\ge y\Big].
  \end{equation}
  We now claim that $\widetilde F$ satisfies,
  \begin{equation}
    \label{eqn:tildeFineq}
    \widetilde{F}(w) = F(w)/w \ge 1-\tfrac{1}{2}(\mu_2-2)w, \qquad
    w\in[0,1].
  \end{equation}
  Indeed, by \eqref{eqn:non-linearity} and the normalisation
  \eqref{eqn:ODprime} of Remark~\ref{rem:muis2}, the non-linearity $F$ of
  \eqref{eqn:FKPP} satisfies $F(0) = 0$ and $F'(0) = 1$. In addition, by
  \eqref{eqn:Fderivatives},   $F''\ge -\mu_2+2$ on $[0,1]$. Therefore, by
  a first order Taylor approximation with Lagrange remainder,
  \begin{equation*}
    F(w) \ge w+\frac{1}{2}\inf\limits_{w^* \in [0,1]}F''(w^*)w^2
    =w-\frac{1}{2}(\mu_2-2)w^2,
  \end{equation*}
  from which \eqref{eqn:tildeFineq} directly follows.

  Plugging \eqref{eqn:tildeFineq} into \eqref{eqn:oiu} and using the
  uniform boundedness \eqref{eqn:uniformly_bounded} from
  Assumption~\ref{ass:environment}, we arrive at
  \begin{equation}
    \label{eqn:lower_bound_first_term}
    w^{y}(t,y-vt)  \ge E_{y-vt}
    \Big[e^{\int_0^{t}\xi(X_s)\D s}\
      e^{ -\frac \es 2(\mu_2-2)
        \int_0^tw^{y}(t-s,X_s)\D s} ; X_{t}\ge y \Big].
  \end{equation}

  In order to obtain a suitable control of the second exponential factor
  in \eqref{eqn:lower_bound_first_term}, we construct an event restricted
  to which the second exponential is bounded from below in a suitable
  way.  For this purpose, recall the definition of $v_1$ from
  \eqref{eqn:definition_of_vone}, and introduce for given $t$, $y$ the
  \emph{moving boundary}
  \begin{equation}
    \beta_{y,t}(s) := y-v_1(t-s), \qquad s\in [0,t].
  \end{equation}
  By $\mathcal{T}_{y,t}:=\inf\{s \ge 0 : X_s = \beta_{y,t}(s)\}$ we
  denote the first hitting time of $\beta_{y,t}$ by a Brownian motion
  started at $y-vt$.

  \begin{figure}
    \centering
    \includegraphics[width=\textwidth]{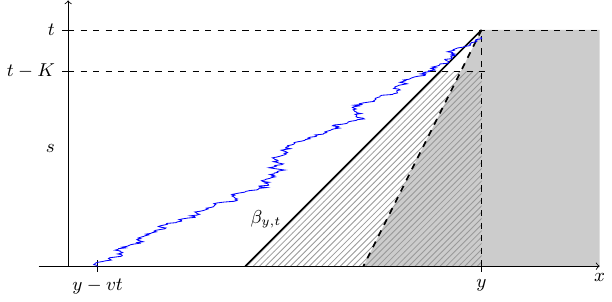}

    \caption{Sketch of a trajectory of the Brownian motion
      $(X_s)_{s\ge 0}$, started at $y-vt$, up until the hitting time $H_y$
      of $y$, which realises the good event~$\mathcal G$. This trajectory
      does not hit the moving barrier $\beta_{y,t}(s)$ (thick solid line)
      in the time interval $[0,t-K]$ and thus avoids the dashed region.
      The function $w^y(t-s,\cdot)$ is close to $1$ in the grey region,
      close to $0$ in its complement, and changes its value from $0$ to
      $1$ in the vicinity of the thick dashed line whose slope is $v_0$.
    }
    \label{figure1}
  \end{figure}

  We claim that for $K>1\vee v_1^{-2}$ to be fixed later, on the good event
  \begin{equation} \label{def:G}
    \mathcal G:= \{\mathcal{T}_{y,t} \in [t-K,t]\}
  \end{equation}
  it holds that
  \begin{equation}
    \label{eqn:RV_upper_bound}
    \int_0^{t-K}w^{y}(t-s,X_s)\D s \le 1,
  \end{equation}
  see Figure~\ref{figure1} for an illustration. Indeed, note that using
  again the Feynman-Kac representation \eqref{eqn:Feynman-Kac_FKPP} as
  well as the uniform boundedness \eqref{eqn:uniformly_bounded} of
  Assumption~\ref{ass:environment}, in combination with the fact that
  $\sup_{w \in [0,1]} \widetilde{F}(w) \le 1$ once more, it holds that
  \begin{align*}
    w^{y}(t-s,X_s)
    &\le E_{X_s}\Big[e^{\int_0^{t-s} \xi(\widetilde{X}_r)\D r };
      \widetilde{X}_{t-s}\ge y \Big]\\
    &\le e^{\es(t-s)}P_{X_s}\big(\widetilde{X}_{t-s}\ge y \big),
  \end{align*}
  where we write $\widetilde{X}$ for an independent Brownian motion
  started at $X_s$ in order to avoid confusion between the two processes.
  On $\mathcal G$ one has that $X_s \le y-v_1(t-s)$ for $s \in [0,t-K]$.
  Hence, by a straightforward coupling argument, on $\mathcal G$ we have
  \begin{equation*}
    P_{X_s}\big(\widetilde{X}_{t-s}\ge y \big)
    \le P_0\big(\widetilde{X}_{t-s}\ge v_1(t-s) \big)
    = P(Z\ge v_1\sqrt{t-s}),
  \end{equation*}
  where $Z$ is a standard Gaussian random variable. Using this in
  combination with a standard Gaussian bound (see
    e.g.~\cite[(1.2.2)]{AdTa-07}) and taking advantage of the fact that
  $v_1\sqrt{(t-s)}\ge v_1\sqrt K \ge 1$, it follows that on $\mathcal G$
  we can upper bound
  \begin{align}
    \label{eqn:w_bound_moving_boundary}
    \begin{split}
      \int_0^{t-K}&w^{y}(t-s,X_s)\D s
      \le \int_0^{t-K}e^{\es(t-s)}P\big(Z\ge v_1\sqrt{t-s} \big)\D s\\
      &\le \frac{1}{\sqrt{2\pi }}\int_0^{t-K}e^{-(v_1^2/2-\es)(t-s)} \D s
      =\frac{1}{\sqrt{2\pi }} \int_K^{t}e^{-(v_1^2/2-\es)z} \D z\\
      &\le \frac{1}{\sqrt{2\pi }(v_1^2/2-\es)}e^{-K(v_1^2/2-\es)} \le 1,
    \end{split}
  \end{align}
  where in the last inequality we used $v_1^2/2-\es = 1$, which holds by
  \eqref{eqn:definition_of_vone}. This proves \eqref{eqn:RV_upper_bound}.

  Coming back to the task of finding a lower bound for the right-hand
  side of \eqref{eqn:lower_bound_first_term}, we infer from the above
  discussion that on $\mathcal G$ we can use \eqref{eqn:RV_upper_bound}
  to bound the second exponential factor  on the right-hand side of
  \eqref{eqn:lower_bound_first_term} by
  \begin{align}
    \label{eqn:lower_bound_last}
    \begin{split}
      e^{-\frac{\es}{2}(\mu_2-2) \int_0^{t}w^{y}(t-s,X_s)\D s}
      &\ge e^{ -\frac{\es}{2}(\mu_2-2)
        \big(1+ \int_{t-K}^tw^{y}(t-s,X_s)\D s\big)}\\
      &\ge e^{ -\frac{\es}{2}(\mu_2-2) (1+K)},
    \end{split}
  \end{align}
  where in the last inequality we used that $0\le w^{y}(s,y)\le 1$
  uniformly for all $(s,y) \in [0,\infty)\times \R$.
  Consequently, by restricting the expectation on the right-hand side of
  \eqref{eqn:lower_bound_first_term} to $\mathcal G$, it follows by
  \eqref{eqn:lower_bound_last} that whenever $v>v_1$, then
  \begin{equation}
    \label{eqn:lower_bound_first_term_2}
    w^{y}(t,y-vt) \ge
    e^{ -\frac{\es}{2}(\mu_2-2)(1+K)}
    E_{y-vt}\Big[e^{\int_0^{t}\xi(X_s)\D s} ; X_{t}\ge y ,\ \mathcal G\Big].
  \end{equation}
  This is our second intermediate inequality.

  In order to finish the proof of \eqref{eqn:Lemma_contradiction}, we
  need to compare the expectations on the right-hand side of
  \eqref{eqn:upper_bound_second_term} and on the right-hand side of
  \eqref{eqn:lower_bound_first_term_2}. This is the purpose of the
  following lemma.

  \begin{lemma}
    \label{lem:Claim}
    Let $v_2$ be as in \eqref{eqn:definition_of_vtwo} and $\mathcal G$ as
    in \eqref{def:G}. Then, for every $v>v_2$ there exist constants
    $K=K(v)$ and $\widetilde{C} = \widetilde{C}(v) \in (0,\infty)$ such
    that $\mathbb P$-a.s., for all $t$ large enough and all $y \in [0,vt]$,
    one has
    \begin{equation}
      \label{eqn:Claim}
      E_{y-vt}\big[ e^{\int_0^t\xi(X_s)\D s} ;
        X_t \ge y\big]
      \le \widetilde{C} E_{y-vt}\big[ e^{\int_0^t\xi(X_s)\D s };
        X_t \ge y,\, \mathcal G\big].
    \end{equation}
  \end{lemma}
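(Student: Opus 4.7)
The plan is to split the LHS of~\eqref{eqn:Claim} according to whether $\mathcal G$ occurs: since the contribution on $\mathcal G$ equals the RHS, the lemma reduces to showing that the contribution on $\mathcal G^c$ is bounded by a constant multiple of the contribution on $\mathcal G$. Observe that because $y-vt<y-v_1 t=\beta_{y,t}(0)$ (using $v>v_1$) and $\beta_{y,t}(t)=y$, any path with $X_t\ge y$ must cross the continuous moving boundary $\beta_{y,t}$ before time $t$, so that on $\{X_t\ge y\}\cap\mathcal G^c$ one automatically has $\mathcal T_{y,t}<t-K$. Applying the strong Markov property at $\mathcal T_{y,t}$, where $X_{\mathcal T_{y,t}}=y-v_1(t-\mathcal T_{y,t})$, factorises the expectation as
\[
E_{y-vt}\big[e^{\int_0^t\xi(X_u)\D u};\,X_t\ge y,\,\mathcal T_{y,t}\in A\big]=\int_A\mu(\D s)\,\phi(t-s)
\]
for any measurable $A\subseteq[0,t]$, where $\mu(\D s):=E_{y-vt}\big[e^{\int_0^{\mathcal T_{y,t}}\xi(X_u)\D u};\,\mathcal T_{y,t}\in\D s\big]$ is the weighted hitting-time measure and $\phi(r):=E_{y-v_1 r}\big[e^{\int_0^r\xi(X_u)\D u};\,X_r\ge y\big]$ is the post-hit Feynman-Kac contribution.

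Two estimates then enter the comparison. First, since $v_1>v_0$ is super-ballistic and hence $\lambda(v_1)<0$, the factor $\phi(r)$ should decay exponentially for large $r$: rewriting $\phi$ via the tilting identity of Section~\ref{sec:tilted_measure} with tilt $\overline\eta(v_1)$, using the bounds on $Z^{\zeta,\overline\eta(v_1)}$ from Lemma~\ref{lem:Z}, and invoking Lemma~\ref{lem:perturbations} to absorb the sub-leading corrections should give $\phi(r)\le c_1 e^{-c_2 r}$ for all large $r$ and $\mathbb P$-a.e.~$\xi$, while a direct Gaussian tail estimate on $P_{y-v_1 r}(X_r\ge y)$ yields $\phi(r)\ge p_K>0$ for $r\in[0,K]$. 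Second, applying the tilting to the pre-hit expectation with parameter $\overline\eta(v)$, together with the choice~\eqref{eqn:definition_of_vtwo} of $v_2$ (which ensures $\underline v(\overline\eta(v))>2v_1$), the pathwise SDE representation of Proposition~\ref{pro:process_driving_tilted_measure}, and the comparison Lemma~\ref{lem:comparison_lemma}, should show that the weighted measure $\mu$ places a $\Theta(1)$ fraction of its mass on $[t-K,t]$, while its tail on $[0,t-K]$ grows at most sub-exponentially at a rate controlled by $\vbar(\overline\eta(v))$.

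The main obstacle is to combine these two estimates into a single uniform bound of the form $\int_0^{t-K}\mu(\D s)\,\phi(t-s)\le C\,\mu([t-K,t])\,p_K$: one must choose $K=K(v)$ large enough that the exponential decay rate $c_2$ of $\phi$ strictly beats the sub-exponential growth rate of $\mu$ as $s$ decreases from~$t$, uniformly in $y\in[0,vt]$, in all sufficiently large~$t$, and for $\mathbb P$-a.e.~$\xi$. This requires careful tracking of how the constants in Lemmas~\ref{lem:perturbations} and~\ref{lem:comparison_lemma} depend on $v$. Once $K$ is so chosen, the complementary lower bound $\int_{t-K}^{t}\mu(\D s)\,\phi(t-s)\ge p_K\,\mu([t-K,t])$ yields the claim with $\widetilde C=1+C$.
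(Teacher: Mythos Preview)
Your factorisation at the moving-boundary hitting time $\mathcal T_{y,t}$ is correct, and the bounds on $\phi$ are easy (indeed $\phi(r)\le e^{\es r}P(Z\ge v_1\sqrt r)\le Ce^{-r}$ by the same crude Gaussian estimate as in \eqref{eqn:w_bound_moving_boundary}). But the claim that the weighted measure $\mu$ places a $\Theta(1)$ fraction of its mass on $[t-K,t]$ is where the real work lies, and the tools you cite do not deliver it. When you pass to the tilted measure at the stopping time $\mathcal T_{y,t}$, the Radon--Nikodym density picks up the factor $Z^{\zeta,\eta}(y-vt)/Z^{\zeta,\eta}(\beta_{y,t}(\mathcal T_{y,t}))$, and by Lemma~\ref{lem:Z} this ratio varies \emph{exponentially} in the spatial displacement $\beta_{y,t}(\mathcal T_{y,t})-(y-vt)=(v-v_1)t+v_1\mathcal T_{y,t}$, with exponent anywhere in $[\underline v(\eta),\vbar(\eta)]$. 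So your description of $\mu$'s tail as ``sub-exponential'' is not what Proposition~\ref{pro:process_driving_tilted_measure} and Lemma~\ref{lem:comparison_lemma} yield; controlling $\mu([0,t-K])/\mu([t-K,t])$ uniformly in $t$ would itself require a local-CLT-type argument for $\mathcal T_{y,t}$, which you have not supplied. The phrase ``choose $K$ large enough that $c_2$ beats the growth rate of $\mu$'' also misdiagnoses the difficulty: your Gaussian lower bound gives $p_K$ of order $e^{-(\es+1-\ei)K}$, so increasing $K$ does not help---$K$ must be fixed first and the $\mu$-estimate proved for that $K$.

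The paper avoids this by stopping at $H_y$ rather than $\mathcal T_{y,t}$. Since $X_{H_y}=y$ is deterministic, the change of measure produces a single constant $Z_{y-vt,y}^{\zeta,\eta}$ that cancels between the upper bound \eqref{eqn:step_1} on the left side and the lower bound \eqref{eqn:step_2} on the right side, reducing the lemma to a pure comparison of tilted probabilities: $P_{y-vt}^{\zeta,\eta}(H_y\in[t-L,t])$ versus $P_{y-vt}^{\zeta,\eta}(H_y\in[t-K,t],\,\mathcal T_{y,t}\ge t-K)$. This is then handled by two short auxiliary lemmas: a drift argument showing that early boundary crossing under $P^{\zeta,\eta}$ forces $H_y<t-L$ with probability at least $1/2$ (this is where $\underline v(\eta)>2v_1$ enters), and a local CLT for $H_y$ under a nearby tilt $\widetilde\eta=\eta_{y-vt,y}^\zeta(v)$ showing $P^{\zeta,\eta}(H_y\le t-L)\le\tfrac14P^{\zeta,\eta}(H_y\le t)$. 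The choice $|\eta|\in(2v_1^2,\,|\overline\eta(v)|-1)$, possible precisely when $v>v_2$, makes both conditions hold simultaneously.
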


  We postpone the proof of Lemma~\ref{lem:Claim} and complete the proof
  of Lemma~\ref{lem:contradiction} first. By combining  the lower bound
  \eqref{eqn:lower_bound_first_term_2}, the upper bound
  \eqref{eqn:upper_bound_second_term} and Lemma~\ref{lem:Claim}, we obtain.
  \begin{align*}
    &w^{y}(t,y-vt)-w^{y+\Delta}(t+u,y-vt)
    \\&\ge  \big(e^{ -\frac{\es}{2}(\mu_2-2)(K+1)}
      -\widetilde{C}\Cr{c:timePert}\Cr{c:spacePert}
      e^{\Cr{c:timePert}u-\Delta/\Cr{c:timePert}}\big)
    E_{y-vt}\Big[e^{\int_0^{t}\xi(X_s)\D s}
      ; X_{t}\ge y, \mathcal G \Big].
  \end{align*}
  For every $\Delta$ satisfying
  \begin{equation*}
    \Delta \ge \Delta_0
    := \Cr{c:spacePert}\Big( \Cr{c:timePert} u+ \frac{\es}{2}(\mu_2-2) (K+1)
      + \ln (\widetilde{C}\Cr{c:timePert}\Cr{c:spacePert}) \Big),
  \end{equation*}
  the right-hand side is positive, which proves
  \eqref{eqn:Lemma_contradiction} and thus the lemma.
\end{proof}

\begin{proof}[Proof of Lemma~\ref{lem:Claim}]
  To prove the lemma, we use the machinery of tilted measures as
  introduced in Section~\ref{sec:tilted_measure}. We recall the notation
  $\zeta = \xi - \es$ from \eqref{eqn:definition_of_zeta} and observe
  that, by multiplying both sides of \eqref{eqn:Claim} by $e^{-\es\, t}$,
  it is sufficient to show \eqref{eqn:Claim} with $\zeta $ in place of
  $\xi$.

  We start by proving an upper bound for the left-hand side of
  \eqref{eqn:Claim} in terms of tilted measures. By
  Lemma~\ref{lem:YaproxFK} there exist  constants $C,L<\infty$ such that
  for any $\eta <0$, for $t$ large enough uniformly in $y\in [0,vt]$ it
  holds that
  \begin{align}
    \begin{split}
      \label{eqn:step_1}
      E_{y-vt}\big[
        e^{\int_0^t\zeta(X_s)\D s} ; X_t \ge y\big]
      &\le C E_{y-vt}\big[e^{\int_0^{H_y}\zeta(X_s)\D s}\ ; H_y\in [ t-L,t] \big]\\
      &\le Ce^{-\eta t} Z_{y-vt,y}^{\zeta,\eta}
      P_{y-vt}^{\zeta,\eta}\big(H_y \in [t-L,t]\big).
    \end{split}
  \end{align}

  In the next step, we bound the expression appearing on the right-hand
  side of \eqref{eqn:Claim} from below. To this end, let
  $p_y^{\zeta,\eta}(t) := P_y^{\zeta,\eta}(X_t\ge y)$. Using the strong
  Markov property we obtain
  \begin{align}
    \begin{split}
      \label{eqn:step_2}
      &E_{y-vt}\big[ e^{\int_0^t\zeta(X_s)\D s }
        ; X_t \ge y, \mathcal{T}_{y,t} \ge t-K\big]
      \\ &\ge
      e^{-(\es-\ei)K}
      E_{y-vt}\big[ e^{\int_0^{H_y}\zeta(X_s)\D s }
        ;  H_y\in [t-K,t], X_t \ge y,  \mathcal{T}_{y,t} \ge t-K\big]
      \\ &\ge
      e^{-(\es-\ei-\eta)K}e^{-\eta t}
      E_{y-vt}\big[ e^{\int_0^{H_y}(\zeta(X_s)+\eta) \D s }
        ;H_y\in [t-K,t], X_t \ge y,   \mathcal{T}_{y,t} \ge t-K \big]
      \\& =
      e^{-(\es-\ei-\eta)K}e^{-\eta t} Z_{y-vt,y}^{\zeta,\eta}
      E_{y-vt}^{\zeta,\eta}\big[
        p_y^{\zeta,\eta}(t-H_y),   H_y \in [t-K,t],\mathcal{T}_{y,t} \ge t-K
        \big]
      \\& \ge
      \frac{1}{2}e^{-(\es-\ei-\eta)K}e^{-\eta t} Z_{y-vt,y}^{\zeta,\eta}
      P_{y-vt}^{\zeta,\eta}\big( H_y \in [t-K,t],\mathcal{T}_{y,t} \ge t-K \big),
    \end{split}
  \end{align}
  where in the last inequality we used Lemma~\ref{lem:comparison_lemma}
  to infer that for any $\eta < 0$ and $s\ge 0$ one has
  $p_y^{\zeta,\eta}(s)\ge P_0^{\sqrt{2|\eta|}}(X_s\ge 0) \ge 1/2$.

  In view of \eqref{eqn:step_1} and \eqref{eqn:step_2}, in order to
  complete the proof of Lemma~\ref{lem:Claim}, it is sufficient to show that
  \begin{equation}
    \label{eqn:reductin}
    P_{y-vt}^{\zeta,\eta}\big(H_y \in [t-L,t]\big)
    \le C P_{y-vt}^{\zeta,\eta}\big(
      H_y \in [t-K,t],\mathcal{T}_{y,t} \ge t-K \big),
  \end{equation}
  for some suitably chosen parameter $\eta $ and constants $C,K,L$,
  $\mathbb P$-a.s.~for all $t$ large, uniformly in $y\in [0,vt]$.

  To this end, we will need two further auxiliary lemmas. The first one
  will be used to upper bound the probability appearing on the right-hand
  side of \eqref{eqn:reductin}, and also specifies the range of suitable
  $\eta $'s.

  \begin{lemma}
    \label{lem:Step_3}
    Let $\eta<0$ be such that $\sqrt{2|\eta|}> v_1(1+\frac{2L}{K})$, and
    let $0<L<K$ be such that $L/K\le 1/3$. Then, $\mathbb P$-a.s.~for
    every $y\in \mathbb R$ and $v>v_1$,
    \begin{equation}
      \label{eqn:Step_3}
      P_{y-vt}^{\zeta,\eta}\big( H_y\le t, \mathcal{T}_{y,t}\le t-K\big)
      \le 2 P_{y-vt}^{\zeta,\eta}( H_y< t-L).
    \end{equation}
  \end{lemma}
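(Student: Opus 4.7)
The plan is to apply the strong Markov property of $(X_t)$ under $P^{\zeta,\eta}$ (available through the SDE representation in Proposition~\ref{pro:process_driving_tilted_measure}) at the stopping time $\mathcal T_{y,t}$, which reduces \eqref{eqn:Step_3} to a single hitting-time estimate starting from a point on the moving barrier. This estimate will then be controlled via Lemma~\ref{lem:comparison_lemma}, comparing the tilted process to Brownian motion of constant drift $\underline v(\eta) = \sqrt{2|\eta|}$.

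Concretely, by path-continuity of $X$ and of the deterministic curve $\beta_{y,t}$, on $\{\mathcal T_{y,t}<\infty\}$ one has $X_{\mathcal T_{y,t}} = \beta_{y,t}(\mathcal T_{y,t}) = y - v_1(t-\mathcal T_{y,t})$. Since trivially $P^{\zeta,\eta}_{y-vt}(H_y<t-L)$ dominates $P^{\zeta,\eta}_{y-vt}(H_y<t-L,\,\mathcal T_{y,t}\le t-K)$, it is enough to prove \eqref{eqn:Step_3} with the latter on the right-hand side. Applying the strong Markov property at $\mathcal T_{y,t}$ to both sides reduces the task to showing that for every $\Delta\ge K$,
\[
P^{\zeta,\eta}_{y-v_1\Delta}(H_y\le\Delta)\;\le\; 2\, P^{\zeta,\eta}_{y-v_1\Delta}(H_y<\Delta-L).
\]
Writing $P(H_y\le\Delta) = P(H_y<\Delta-L) + P(H_y\in[\Delta-L,\Delta])$ and bounding the second term by $1 - P(H_y<\Delta-L)$, this will in turn follow from the single estimate $P^{\zeta,\eta}_{y-v_1\Delta}(H_y<\Delta-L) \ge 1/2$.

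For this bound, I combine the inclusion $\{X_{\Delta-L}\ge y\}\subseteq\{H_y\le\Delta-L\}$ with the absolute continuity of $H_y$ under $P^{\zeta,\eta}$ (so $P(H_y<\Delta-L)=P(H_y\le\Delta-L)$), and invoke Lemma~\ref{lem:comparison_lemma} applied to the non-decreasing function $\bbone_{[y,\infty)}$ to obtain
\[
P^{\zeta,\eta}_{y-v_1\Delta}(H_y<\Delta-L)\;\ge\; P^{\underline v(\eta)}_{y-v_1\Delta}(X_{\Delta-L}\ge y) \;=\; P\Big(Z\ge \tfrac{v_1\Delta - \underline v(\eta)(\Delta-L)}{\sqrt{\Delta-L}}\Big),
\]
with $Z\sim\mathcal N(0,1)$. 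The last probability is at least $1/2$ as soon as $\underline v(\eta)(\Delta-L)\ge v_1\Delta$, i.e.\ $\underline v(\eta)\ge v_1/(1-L/\Delta)$, and since $\Delta\ge K$ it suffices to require $\underline v(\eta)\ge v_1/(1-L/K)$. The elementary identity $(1+2L/K)(1-L/K)= 1 + (L/K)(1-2L/K)\ge 1$ holds whenever $L/K\le 1/2$, which in particular is true under the standing assumption $L/K\le 1/3$. Hence $v_1(1+2L/K)\ge v_1/(1-L/K)$, so the hypothesis $\sqrt{2|\eta|}>v_1(1+2L/K)$ precisely supplies what is needed. The step I expect to require the most care is the clean setup of the strong Markov reduction at the moving-barrier hitting time, together with the identification of the post-$\mathcal T_{y,t}$ starting point as $y-v_1(t-\mathcal T_{y,t})$ by continuity; the remainder is a direct consequence of Lemma~\ref{lem:comparison_lemma} and the algebraic inequality above.
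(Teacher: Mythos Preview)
Your proof is correct and follows essentially the same route as the paper's: apply the strong Markov property at $\mathcal{T}_{y,t}$ on $\{\mathcal{T}_{y,t}\le t-K\}$, use that $X_{\mathcal{T}_{y,t}}=y-v_1\Delta$ with $\Delta=t-\mathcal{T}_{y,t}\ge K$, and reduce to showing $P^{\zeta,\eta}_{y-v_1\Delta}(H_y<\Delta-L)\ge 1/2$ via the comparison with Brownian motion of drift $\sqrt{2|\eta|}$ from Lemma~\ref{lem:comparison_lemma}. The only cosmetic differences are that the paper phrases the conditional estimate as an infimum over $u\in[0,t-K]$ and computes the mean $E^{\sqrt{2|\eta|}}_{\beta_{y,t}(u)}[X_{t-u-L}]\ge y$ directly, whereas you parameterise by $\Delta$ and carry out the equivalent check $\underline v(\eta)\ge v_1/(1-L/K)$; your use of the inclusion $\{X_{\Delta-L}\ge y\}\subseteq\{H_y\le\Delta-L\}$ before invoking Lemma~\ref{lem:comparison_lemma} is in fact slightly cleaner than the paper's direct comparison of hitting-time probabilities.
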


  The second auxiliary lemma is a quantitative extension of a part of
  Proposition 3.5 of \cite{DS2022}. It states that under the tilted
  measure, if the tilting is not too strong,  the probability of crossing a
  large interval in time $t$ is comparable to the probability of crossing
  the same interval in time $t-L$.

  \begin{lemma}
    \label{lem:Step_4}
    For every $v>v_c$ there is $c = c(v)<\infty$ such that
    for all $L$ large enough and
    $\eta \in (\overline \eta (v)+\frac cL,0)$, $\mathbb P$-a.s.~for all
    $t$ large enough and $|y| \le 2vt$,
    \begin{equation*}
      P_{y-vt}^{\zeta,\eta}(H_y \le  t-L)
      \le \frac{1}{4}P_{y-vt}^{\zeta,\eta}(H_y \le t),
    \end{equation*}
    and as a consequence,
    \begin{equation*}
      P_{y-vt}^{\zeta,\eta}(H_y \le t-L)
      \le \frac{1}{3}P_{y-vt}^{\zeta,\eta}\big(H_y \in (t-L,t] \big).
    \end{equation*}
  \end{lemma}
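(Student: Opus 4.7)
The plan is to combine an exponential Chebyshev / tilting bound for $P^{\zeta,\eta}_{y-vt}(H_y\le t-L)$ with a matching CLT-type lower bound for $P^{\zeta,\eta}_{y-vt}(H_y\le t)$. The key algebraic identity, obtained by unwrapping \eqref{eqn:definition_of_tilted_measure}, is that for every $\mu\ge 0$,
\[
E^{\zeta,\eta}_{y-vt}\bigl[e^{-\mu H_y}\bigr]=\frac{Z^{\zeta,\eta-\mu}_{y-vt,y}}{Z^{\zeta,\eta}_{y-vt,y}},
\]
so the exponential Markov inequality yields
\[
P^{\zeta,\eta}_{y-vt}(H_y\le t-L)\le e^{\mu(t-L)}\,\frac{Z^{\zeta,\eta-\mu}_{y-vt,y}}{Z^{\zeta,\eta}_{y-vt,y}}.
\]
By the ergodicity and $\psi$-mixing of $\zeta$ together with \eqref{eqn:gf} one has the quenched shape theorem $\ln Z^{\zeta,\eta}_{y-vt,y}=vtL(\eta)+o(t)$ $\PP$-a.s., uniformly in $y\in[-2vt,2vt]$ and locally uniformly in $\eta$; this is essentially a quantitative version of Proposition~3.5 of \cite{DS2021}.

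Setting $\eta=\overline\eta(v)+c/L$ and writing $\beta:=vL''(\overline\eta(v))>0$, the defining identity $vL'(\overline\eta(v))=1$ together with a second-order Taylor expansion gives $vtL'(\eta)=t+\beta tc/L+O(tc^2/L^2)$ and $vtL''(\eta)=\beta t+o(t)$. Plugging the shape theorem into the Chebyshev bound, expanding, and optimising the resulting quadratic in $\mu$ at $\mu^\star$ of order $1/L$ produces
\[
P^{\zeta,\eta}_{y-vt}(H_y\le t-L)\le \exp\!\Bigl(-c-\frac{\beta tc^2}{2L^2}+o(1)\Bigr).
\]

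For the matching lower bound I would use a CLT/Gaussian concentration for $H_y$ under $P^{\zeta,\eta}$: Proposition~\ref{pro:process_driving_tilted_measure} identifies the tilted process as a diffusion with bounded Lipschitz drift, and the stationarity and $\psi$-mixing of $\zeta$ let one decompose $H_y$ into approximately independent increments over unit distances, each with mean $L'(\eta)$ and variance $L''(\eta)$; a CLT for mixing triangular arrays then yields
\[
P^{\zeta,\eta}_{y-vt}(H_y\le t)\ge \exp\!\Bigl(-\frac{\beta tc^2}{2L^2}+o(1)\Bigr).
\]
Dividing the two bounds produces a ratio $\le e^{-c+o(1)}$, so choosing the constant $c=c(v)$ in the statement large enough (say $c\ge \ln 4+1$) and then taking $L$ and $t$ sufficiently large forces the ratio to be at most $1/4$, which is the first asserted inequality. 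The second inequality follows from the algebraic decomposition $P(H_y\le t)=P(H_y\le t-L)+P(H_y\in(t-L,t])$: the first inequality turns into $\tfrac{3}{4}P(H_y\le t-L)\le \tfrac{1}{4}P(H_y\in(t-L,t])$, which is the claimed $\tfrac{1}{3}$ bound.

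The main technical obstacle is the CLT-type lower bound for $P^{\zeta,\eta}_{y-vt}(H_y\le t)$ with a quantitatively correct Gaussian exponent, uniformly in $y\in[-2vt,2vt]$ and in $\eta$ in a neighbourhood of $\overline\eta(v)$; both this and the matching quenched shape asymptotics for $\ln Z^{\zeta,\eta}_{y-vt,y}$, required uniformly over the same ranges, rely in an essential way on the $\psi$-mixing assumption on $\xi$ and the fine $Z^{\zeta,\eta}$-estimates developed in \cite{DS2021} — which is precisely why the lemma is labelled a quantitative extension of a part of Proposition~3.5 of that source.
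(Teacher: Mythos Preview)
Your high-level strategy—tilting plus a Gaussian approximation—is the right one, but the claimed $o(1)$ errors in both displayed bounds are not justified, and this is a genuine gap. For the upper bound, the quenched shape theorem only gives $\ln Z^{\zeta,\eta}_{y-vt,y}=vtL(\eta)+o(t)$; even with CLT-type fluctuation control the residual is of order $\sqrt t$, and after differencing at $\eta$ and $\eta-\mu$ with $\mu\sim 1/L$ you are left with an error of order $\sqrt t/L$ in the exponent, which swamps the constant $-c$ you want to isolate. For the lower bound, under $P^{\zeta,\eta}$ with $\eta=\overline\eta(v)+c/L$ the mean of $H_y$ exceeds $t$ by roughly $\beta tc/L\gg\sqrt t$, so $\{H_y\le t\}$ is a \emph{moderate deviation} event; a CLT statement of the form $P(H_y\le t)=\Phi(\cdot)+o(1)$ is vacuous here since both sides are $o(1)$. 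You would need a genuine moderate deviation lower bound with the correct constant, and that is not what a CLT for mixing arrays provides.

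The paper avoids both problems by tilting not to the annealed $\overline\eta(v)$ but to the \emph{quenched} parameter $\widetilde\eta=\eta^{\zeta}_{y-vt,y}(v)$ defined by $E^{\zeta,\widetilde\eta}_{y-vt}[H_y]=t$ exactly. Under $P^{\zeta,\widetilde\eta}$ the event $\{H_y\le t\}$ sits in the bulk, so a Berry--Esseen (local CLT) bound applies directly. Both $P^{\zeta,\eta}_{y-vt}(H_y\le t-L)$ and $P^{\zeta,\eta}_{y-vt}(H_y\le t)$ are then written as the \emph{same} prefactor $(Z^{\zeta,\widetilde\eta}_{y-vt,y}/Z^{\zeta,\eta}_{y-vt,y})\,e^{(\eta-\widetilde\eta)t}$ times an integral $\int e^{-\widetilde\sigma u}\,\mu^{\zeta}_{y-vt,y}(\D u)$ over different ranges, with $\mu^{\zeta}_{y-vt,y}$ close to standard normal. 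The prefactor cancels exactly in the ratio—so no shape-theorem error ever enters—and the comparison reduces to an elementary estimate on the two integrals. The hypothesis $\eta>\overline\eta(v)+c/L$ is used only, via Lemma~\ref{lem:etazeta}\ref{item:etazetaB}, to guarantee $\eta-\widetilde\eta\gtrsim c/L$, which makes $\widetilde\sigma\ell=L(\eta-\widetilde\eta)$ large and forces the ratio of integrals below $1/4$. Your derivation of the second inequality from the first is correct.
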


  We postpone the proofs of these two lemmas to the end of the current
  section. We now come back to the proof of Lemma~\ref{lem:Claim} and
  complete it by showing \eqref{eqn:reductin}. To this end, we choose the
  parameters $\eta$, $K$, and $L$ in such a way that the previous two
  lemmas can be used simultaneously. More precisely, for a given
  $v\ge v_2$ we fix arbitrary $\eta $ so that
  \begin{equation}
    \label{eqn:fixeta}
    |\overline \eta (v)| -1 > |\eta | > 2 v_1^2,
  \end{equation}
  which is possible by the definition of $v_2$ in
  \eqref{eqn:definition_of_vtwo}. Then we fix $L$ as large as required in
  Lemma~\ref{lem:Step_4}. Consequently, due to \eqref{eqn:fixeta}, the
  required assumptions on $\eta$ are satisfied in our setting. Finally,
  we fix $K\ge 3 L$ and observe that, in combination with
  \eqref{eqn:fixeta},
  $\sqrt {2|\eta |} > 2v_1 \ge v_1(1+\frac {2L}{K})$, so that the
  assumptions of Lemma~\ref{lem:Step_3} are satisfied also.

  With this choice of constants, noting that
  $\{H_y \in [t-K,t],\mathcal{T}_{y,t} \ge t-K\}
  = \{H_y \le t,\mathcal{T}_{y,t} \ge t-K\}$
  (cf.\ Figure \ref{figure1} also), the right-hand side of
  \eqref{eqn:reductin} satisfies
  \begin{equation}
    \begin{split}
      \label{eqn:aa}
      P_{y-vt}^{\zeta,\eta}&\big( H_y \in [t-K,t],\mathcal{T}_{y,t} \ge t-K \big)
      \\&=
      P_{y-vt}^{\zeta,\eta}\big( H_y \le t\big)
      - P_{y-vt}^{\zeta,\eta}\big( H_y\le t, \mathcal{T}_{y,t} < t-K \big)
      \\&\ge
      P_{y-vt}^{\zeta,\eta}\big( H_y \le t\big)
      -2 P_{y-vt}^{\zeta,\eta}\big( H_y \le t-L\big),
    \end{split}
  \end{equation}
  where the last inequality follows from Lemma~\ref{lem:Step_3}. This can
  be written as
  \begin{equation}
    \label{eqn:ab}
    P_{y-vt}^{\zeta,\eta}\big( H_y \in [t-L,t]\big)
    - P_{y-vt}^{\zeta,\eta}\big( H_y \le t-L\big) \ge
    \frac 23 P_{y-vt}^{\zeta,\eta}\big( H_y \in [t-L,t]\big),
  \end{equation}
  where the last inequality is a direct consequence of
  Lemma~\ref{lem:Step_4}. Now combining  \eqref{eqn:aa} and
  \eqref{eqn:ab} we obtain \eqref{eqn:reductin}, which completes the proof.
\end{proof}

It remains to provide the proofs of Lemmas~\ref{lem:Step_3} and
\ref{lem:Step_4}.

\begin{proof}[Proof of Lemma \ref{lem:Step_3}]
  Using the tower property for conditional expectations, we obtain
  \begin{align}
    \label{eqn:sigma_past}
    \begin{split}
      P_{y-vt}^{\zeta,\eta}(H_y<t-L)
      &\ge  P_{y-vt}^{\zeta,\eta}(H_y<t-L, \mathcal{T}_{y,t}\le t-K)\\
      & = E_{y-vt}^{\zeta,\eta}\big[
        \bbone_{\{\mathcal{T}_{y,t}\le t-K\}}
        P_{y-vt}^{\zeta,\eta}( H_y<t-L\mid {\mathcal F_{\mathcal{T}_{y,t}}})
        \big],
    \end{split}
  \end{align}
  where $\mathcal F_{\mathcal{T}_{y,t}}$ is the canonical stopped $\sigma$-algebra
  associated to $\mathcal{T}_{y,t}$. It follows from
  Lemma~\ref{lem:comparison_lemma} that the drift of $X$ under the tilted
  measure $P_{y-vt}^{\zeta,\eta}$ is always larger than $\sqrt{2|\eta|}$.
  On the event $\{0\le\mathcal{T}_{y,t}\le t-K\}$, by the strong Markov
  property at time $\mathcal T_{y,t}$ and using that
  $X_{\mathcal T_{y,t}}=\beta_{y,t}(\mathcal T_{y,t})$, it holds that
  \begin{align} \label{eqn:probEst}
    \begin{split}
      P_{y-vt}^{\zeta,\eta}\big (H_y< t-L \mid \mathcal F_{\mathcal{T}_{y,t}} \big)
      &= P_{X_{\mathcal T_{y,t}}}^{\zeta,\eta}\big (H_y< t-L-\mathcal T_{y,t}\big)
      \\&\ge \inf_{0\le u \le t-K} P_{\beta_{y,t}(u)}^{\zeta,\eta}(H_y\le t-u-L )
      \\&\ge \inf_{0 \le u \le t-K} P_{\beta_{y,t}(u)}^{\sqrt{2|\eta|}}(H_y\le
        t- u-L ).
    \end{split}
  \end{align}
  Recalling the assumptions of the lemma,  for $u\in[0,t-K]$ we have that
  \begin{equation}
    \label{eqn:drifteqn}
    \begin{split}
      E_{\beta_{y,t}(u)}^{\sqrt{2|\eta|}}(X_{t-u-L})
      &= \beta_{y,t}(u) + \sqrt{2|\eta|}(t-u-L)
      \\&\ge y-v_1(t-u) + v_1\big(1+\tfrac {2L}K\big)(t-u-L)
      \\&\ge  y- v_1 L + v_1\tfrac {2L}K(K-L) \ge y + \tfrac 13 v_1 L \ge
      y,
    \end{split}
  \end{equation}
  where for the penultimate inequality we used $K-L\ge \frac 23 K$, which
  holds by assumption. In combination with the fact that $X$ is a
  Brownian motion with drift under $P_{\beta_{y,t}(u)}^{\sqrt{2|\eta|}}$,
  it follows that the probability on the right-hand side of
  \eqref{eqn:probEst} is at least $1/2$. Plugging this back into
  \eqref{eqn:sigma_past} we arrive at
  \begin{align*}
    P_{y-vt}^{\zeta,\eta}(H_y < t-L)
    &\ge \frac{1}{2} P_{y-vt}^{\zeta,\eta}
    \big(\mathcal{T}_{y,t}\le t-K\big)
    \\&\ge \frac{1}{2} P_{y-vt}^{\zeta,\eta}
    \big(\mathcal{T}_{y,t}\le t-K, H_y \le t \big),
  \end{align*}
  as claimed.
\end{proof}

Next, we give the proof of Lemma~\ref{lem:Step_4}.

\begin{proof}[Proof of Lemma~\ref{lem:Step_4}]
  The first part of the proof of this lemma follows the same steps as the
  proof of Proposition~3.5 of \cite{DS2022} (see also the proof of
    Lemma~\ref{lem:Ycomparability}). By
  Lemma~\ref{lem:etazeta}\ref{item:etazetaA}, $\mathbb P$-a.s.~for all $t$
  large enough, and all $|y|\le 2vt$, there exist constants
  $\eta_{y-vt,y}^{\zeta }(v)$ so that
  \begin{equation}
    E_{y-vt}^{\zeta, \eta_{y-vt,y}^{\zeta }(v)}[H_y]=t.
  \end{equation}
  To simplify the notation we write
  $\widetilde \eta  := \eta_{y-vt,y}^{\zeta }(v)$. Using
  Lemma~\ref{lem:etazeta}\ref{item:etazetaB}, we can assume that
  $\widetilde \eta  < \overline \eta (v) + \frac c{2L}$, and thus, by the
  hypothesis of the lemma,
  \begin{equation}
    \eta - \widetilde \eta > \frac c{2L}.
  \end{equation}
  By definition of the tilted measures, cf.
  \eqref{eqn:definition_of_tilted_measure},
  \begin{equation}
    \label{eqn:arrival_before}
    \begin{split}
      P_{y-vt}^{\zeta,\eta}&(H_y \le t-L) = \frac{1}{Z^{\zeta,\eta }_{y-vt,y}}\,
      E_{y-vt}\Big[e^{\int_0^{H_y}(\zeta (X_s)+\eta )\D s}; H_y\le t-L\Big]
      \\&=
      \frac{Z_{y-vt,y}^{\zeta,\widetilde \eta} }{Z_{y-vt,y}^{\zeta,\eta}}\,
      \frac{1}{Z_{y-vt,y}^{\zeta,\widetilde \eta }}\,
      E_{y-vt}\Big[e^{\int_0^{H_y}(\zeta(X_s)+\widetilde \eta )\D s}
        e^{-H_y(\widetilde\eta-\eta)}; H_y \le t-L\Big].
      \\&=
      \frac{Z_{y-vt,y}^{\zeta,\widetilde \eta} }{Z_{y-vt,y}^{\zeta,\eta}}\,
      E_{y-vt}^{\zeta ,\widetilde\eta }\Big[
        e^{-H_y(\widetilde\eta-\eta)}; H_y \le t-L\Big].
    \end{split}
  \end{equation}
  Define random variables
  $\tau_i = H_{y-vt+i}-H_{y-vt+i-1}$, $i = 1,\dots,\lfloor vt \rfloor$, and
  $\tau_{vt} = H_y-H_{y-vt+\lfloor vt\rfloor}$, so that
  $\sum_{i = 1}^{\lfloor vt \rfloor} \tau_i + \tau_{vt} = H_y$,
  and their centred versions
  $\widehat\tau_i = \tau_i-E_{y-vt}^{\zeta ,\widetilde \eta }[\tau_i]$ for
  $i = 1,\dots, \lfloor vt\rfloor$, and
  $\widehat\tau_{vt} = \tau_{vt}-E_{y-vt}^{\zeta ,\widetilde\eta}[\tau_{vt}]$.
  Further, let
  \begin{equation}
    Y_{y-vt,y}^{\zeta} := \tfrac{(\widetilde\eta-\eta)}
    {\widetilde \sigma }\big(\sum_{i = 1}^{\lfloor vt \rfloor} \widehat\tau_i +
      \widehat\tau_{vt}\big),
  \end{equation}
  where
  \begin{equation}
    \label{eqn:definition_sigma}
    \widetilde \sigma  = \widetilde\sigma_{y-vt,y}^{\zeta}(v)
    = |\widetilde \eta -\eta|\sqrt{\var_{P_{y-vt}^{\zeta,\widetilde\eta}}(H_y)}.
  \end{equation}
  is chosen so that the variance of $Y_{y-vt,y}^\zeta $ is one. Denoting
  by $\mu_{y-vt,y}^\zeta $ the distribution of $Y_{y-vt,y}^\zeta $ under
  $P_{y-vt,y}^{\zeta ,\widetilde \eta }$, using
  also the fact that $E_{y-vt,y}^{\zeta, \widetilde \eta }[H_y] = t$, by the definition
  of $\widetilde \eta $, \eqref{eqn:arrival_before} can be rewritten as
  \begin{align}
    \begin{split}
      \label{eqn:bulk_arrival_integral}
      P_{y-vt}^{\zeta,\widetilde\eta}&(H_y \le t-L)
      \\&=\frac{Z_{y-vt,y}^{\zeta,\widetilde\eta}}{Z_{y-vt,y}^{\zeta,\eta}}
      \,e^{(\eta-\widetilde\eta)t}
      E_{y-vt}^{\zeta, \widetilde\eta}
      \Big[ e^{-\widetilde\sigma Y_{y-vt,y}^{\zeta}} ; Y_{y-vt,y}^{\zeta} \in
        \Big[\frac{L(\eta-\widetilde\eta)}{\widetilde\sigma},\infty\Big)\Big]
      \\
      &=\frac{Z_{y-vt,y}^{\zeta,\widetilde\eta}}{Z_{y-vt,y}^{\zeta,\eta}}
      \,e^{(\eta-\widetilde\eta)t}
      \int_{L(\eta-\widetilde\eta)/\widetilde\sigma}^{\infty} e^{-\widetilde \sigma u}
      \mu_{y-vt,y}^{\zeta}(\D u).
    \end{split}
  \end{align}
  Setting $L=0$ in the above formula we further obtain
  \begin{equation}
    \label{eqn:arrival_before_t_integral}
    P_{y-vt}^{\zeta,\eta}(H_y \le t)
    =\frac{Z_{y-vt,y}^{\zeta,\widetilde\eta}}{Z_{y-vt,y}^{\zeta,\eta}}
    \,e^{(\eta-\widetilde\eta)t}
    \int_{0}^{\infty} e^{-\widetilde \sigma u}
    \mu_{y-vt,y}^{\zeta}(\D u),
  \end{equation}
  Hence, to finish the proof of the lemma, it suffices to show that the
  integral on the right-hand side of \eqref{eqn:bulk_arrival_integral} is
  at most $1/4$ of the integral on the right-hand side
  of~\eqref{eqn:arrival_before_t_integral}.

  To see this we proceed as in the proof of Lemma~3.6 of \cite{DS2022}.
  By the strong Markov property the random variables $\widehat\tau_i$,
  $i = 1,\dots, \lfloor vt \rfloor$, and $\widehat \tau_{vt}$ are
  independent under $P_{y-vt}^{\zeta,\widetilde\eta}$. Further, it is a
  straightforward consequence of the definitions of the logarithmic
  moment generating functions in \cite[(2.7)]{DS2022} and their being
  well defined for $\eta<0$ that these random variables have uniform
  exponential moments. Moreover, recall that $\widetilde\sigma$ was
  chosen such that the variance of $\mu_{y-vt,y}^{\zeta}$ is one. This
  allows the application of a local central limit theorem for normalised
  sums of independent random variables \cite[Theorem
    13.3]{Bhattacharya1986nonrmal}, which implies that
  \begin{equation}
    \label{eqn:application_lclt}
    \sup_{B}|\mu_{y-vt,y}^{\zeta}(B)-\Phi(B)|\le \Cl{c:LCLT} (\lceil vt \rceil)^{-1/2},
  \end{equation}
  where the supremum is taken over all intervals $B$ in $\R$ and $\Phi$
  denotes the standard Gaussian measure. Note that the constant
  $\Cr{c:LCLT}$ in the last display depends only on the uniform bound of
  the exponential moments of the $\widehat\tau_i$s. Without loss of
  generality, we can assume that $\Cr{c:LCLT}>4$.  We also note that by
  \cite[(3.8)]{DS2022} (see also \eqref{eqn:sigmaas}) the variance
  $\widetilde\sigma^2$ defined in \eqref{eqn:definition_sigma} satisfies
  for $\PP$-a.e.~$\zeta$ and $t$ large enough
  \begin{equation}
    \label{eqn:sigma_bounds}
    \Cl{c:var}^{-1}\sqrt{\lceil vt \rceil}
    \le \widetilde\sigma\le \Cr{c:var} \sqrt{\lceil vt \rceil}.
  \end{equation}

  We now have all ingredients to finish the proof. To this end, we assume
  that the constant $c$ from the statement of the lemma satisfies the
  inequality
  \begin{equation}
    \label{eqn:ell}
    \ell := \frac{L(\eta -\widetilde \eta )}{\widetilde \sigma }
    \ge \frac c{2\Cr{c:var}  \sqrt{vt}} \ge \frac{20 c_1}{\sqrt {vt}}.
  \end{equation}
  To bound the integral in \eqref{eqn:bulk_arrival_integral} from above,
  we observe that for any interval $(a,b)$ of length $\ell$ we have
  $\Phi((a,b))\le \ell/\sqrt{2\pi }$ and thus
  $\mu_{y-vt,y}^{\zeta}((a,b))\le (\ell + \Cr{c:LCLT}/\sqrt{vt}) \le 2 \ell $,
  by \eqref{eqn:ell}. Therefore, using \eqref{eqn:sigma_bounds} in the
  last step,
  \begin{equation}
    \label{eqn:intup}
    \begin{split}
      \int_{L(\eta-\widetilde\eta)/\widetilde\sigma}^{\infty} e^{-\widetilde \sigma u}
      \mu_{y-vt,y}^{\zeta}(\D u)
      &\le \sum_{i=1}^\infty e^{-\widetilde \sigma i \ell}
      \mu_{y-vt,y}^{\zeta}((i\ell, (i+1)\ell))
      \\&\le\frac{ 2\ell e^{-\widetilde\sigma  \ell }}{1-e^{-\widetilde\sigma \ell}}
      \le\frac{ 2\widetilde \sigma \ell e^{-\widetilde\sigma  \ell }}{1-e^{-\widetilde\sigma \ell}}
      \cdot \frac{\Cr{c:var}}{\sqrt{vt}}.
    \end{split}
  \end{equation}

  On the other hand, using the rough bound $\Phi((0,x))\ge x/5$ which holds
  for small enough $x$, and \eqref{eqn:ell},
  \begin{equation}
    \label{eqn:intdown}
    \begin{split}
      \int_{0}^{\infty} e^{-\widetilde \sigma u}
      &\mu_{y-vt,y}^{\zeta}(\D u)
      \ge
      \int_{0}^{L(\eta-\widetilde\eta)/2\widetilde\sigma} e^{-\widetilde \sigma u}
      \mu_{y-vt,y}^{\zeta}(\D u)
      \\&\ge e^{-\widetilde \sigma \ell /2} \mu_{y-vt,y}^{\zeta}((0,\ell/2))
      \ge e^{-\widetilde \sigma \ell /2}
      \Big(\Phi((0,\ell/2))-\frac {\Cr{c:LCLT}}{\sqrt {vt}}\Big)
      \\&\ge e^{-\widetilde \sigma \ell /2} \frac{c_1}{\sqrt {vt}}.
    \end{split}
  \end{equation}
  By increasing the value of the constant $c$ and thus of
  $\widetilde\sigma \ell \ge c/2$, the right-hand side of \eqref{eqn:intup}
  can be made at most $1/4$ as large as the right-hand side of
  \eqref{eqn:intdown}. This completes the proof of the lemma.
\end{proof}

\section{Proof of the tightness of the maximum of the BBMRE}
\label{sec:proofof_tightness}

We are now ready to prove the main theorem of this paper.

\begin{proof}[Proof of Theorem~\ref{thm:BBMRE_is_tight}]
  For $\varepsilon \in (0,1/2)$ let $x_t  = x_t(\varepsilon)\in \R$ the
  unique location where
  \begin{equation}
     \label{eqn:xt}
      w^{x_t}(t,0) = \PPP_0^\xi(M(t) \ge x_t) = \varepsilon.
  \end{equation}
  As already
  explained in Section~\ref{ssec:strategy}, to show the tightness of the
  re-centred maximum $M(t)$ we need to show that there
  exists $\Delta = \Delta(\varepsilon)<\infty$ such that for all $t>0$ it
  holds that
  \begin{equation}
    \label{eqn:goal_tight}
    w^{x_t-\Delta}(t,0)=\PPP_0^{\xi}(M(t)\ge x_t-\Delta) >1-\varepsilon.
  \end{equation}

  Note that \eqref{eqn:xt} and the law of large numbers for $M(t)$ (that is
    $\lim_{t \to \infty} M(t)/t = v_0$, cf. \eqref{eqn:MtLLN}) imply that
  \begin{equation}
    \label{eqn:LLN_x_t}
    \lim_{t \to \infty} \frac{x_t}{t} = v_0, \quad\mathbb P\text{-a.s.}
  \end{equation}
  A direct consequence of \eqref{eqn:LLN_x_t} is that for $t$ large
  enough, we can guarantee that $x_t$ is large enough in order to apply
  Corollary~\ref{cor:almost_surely_finite_time} to $w^{x_t}(t,0)$.
  Therefore, for large enough $t$, we infer the existence of a $\PP$-a.s.\
  finite random time $T<\infty$ such that
  \begin{equation}
    \label{eqn:contradiction_conclusion_1}
    w^{x_t}(t+t',0) \ge 1-\varepsilon \quad
    \text{for all $t'\in [T,T+1]$}.
  \end{equation}

  For any $u \in \N$ define the subset $\Omega_u = \{ T \in [u-1,u)\}$ of
  the probability space on which $\xi$ is defined. We now consider
  $\xi \in \Omega_u$. Observe that \eqref{eqn:goal_tight} would follow
  from \eqref{eqn:contradiction_conclusion_1} on $\Omega_u$, if for a
  suitably large $\Delta<\infty$ we had
  \begin{equation}
    \label{eqn:right_ordering}
    w^{x_t-\Delta}(t,0)\ge w^{x_t}(t+u,0).
  \end{equation}

  Instead of comparing these two functions directly at $x=0$ we use the
  Sturmian principle, to relate the inequality \eqref{eqn:right_ordering}
  at the origin to an inequality at some point on the negative half-line.
  More precisely, recall from Section~\ref{ssec:sturmian_principle} that
  for any $t>0$, $u>0$ and $\widetilde\Delta<\infty$ the difference
  \begin{equation*}
    W^{u,\widetilde \Delta}(t,x):=w^{x_t-\widetilde \Delta}(t,x)-w^{x_t}(t+u,x)
  \end{equation*}
  solves a linear parabolic equation of the form
  \eqref{eqn:linear_parabolic_equation}, with initial condition
  \begin{equation*}
    W^{u,\widetilde \Delta}(0,x) = \bbone_{[x_t-\widetilde \Delta,\infty)}(x)-w^{x_t}(u,x).
  \end{equation*}

  Since $0<w^{x_t}(u,x)<1$ for all $u>0$ and $x \in \R$, cf.\
  \eqref{eqn:sol01bd}, it follows moreover, that
  $W^{u,\widetilde \Delta}(0,x)>0$ for $x>x_t-\widetilde\Delta$ and
  $W^{u,\widetilde \Delta}(0,x)<0$ for $x<x_t-\widetilde\Delta$.
  Therefore, it holds by Lemma~\ref{lem:sturmian_principle} that for all
  $t>0$ the sets
  \begin{equation*}
    \{ x \in \R : w^{x_t-\widetilde \Delta}(t,x)> w^{x_t}(t+u,x)\}
    = \{x \in \R : W^{u,\widetilde \Delta}(t,x)>0\}
  \end{equation*}
  are open intervals unbounded to the right.
  Thus, in order to prove \eqref{eqn:right_ordering} it suffices to find
  some $x^* = x^*(t)<0$ and $\Delta<\infty$ such that $W^{u,\Delta}(t,x^*)>0$,
  as this implies $0 \in \{x \in \R : W^{u,\Delta }(t,x)>0\}$, which in turn
  implies \eqref{eqn:right_ordering}; for an illustration of this
  argument see Figure~\ref{figure:ordering}.

  \begin{figure}
    \centering
    \includegraphics[width=\textwidth]{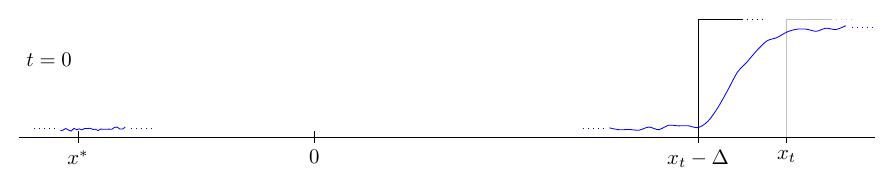}

    \includegraphics[width=\textwidth]{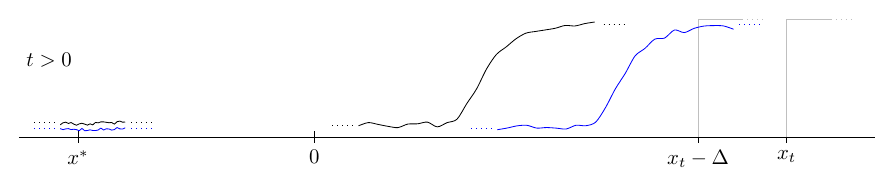}

    \caption{The top figure shows the graph of the function
      $w^{x_t-\Delta}(0,\cdot) = \bbone_{[x_t-\Delta,\infty)}(\cdot)$ in
      black and the function $w^{x_t}(T,\cdot)$ in blue. The lower figure
      shows the graph of the same functions at some positive time $t>0$.
      By the Sturmian principle, for any $t>0$, the region where
      $w^{x_t-\Delta}(t,\cdot)$ dominates $w^{x_t}(t+T,\cdot)$ is an
      interval that contains $[x^*,\infty)$.}
    \label{figure:ordering}
  \end{figure}

  To find such $x^*(t)$ take any $v>v_2$, where $v_2$ is defined in
  \eqref{eqn:definition_of_vtwo}. Since $v_2>v_0$ and
  $(x_t-\Delta)/t\to v_0$, by \eqref{eqn:LLN_x_t}, it follows that
  $x_t-\Delta \in [0,vt]$ for all $t$ that are sufficiently large.
  Consequently we can apply Lemma~\ref{lem:contradiction} and infer the
  existence of an $\PP$-a.s.~finite random variable  $\mathcal{T}(u,v)$ and
  some $\Delta_0(u,v)>0$ such that if we require, additional to the
  previous conditions on the size of $t$, that $t> \mathcal{T}(u,v)$ and
  that $\Delta>\Delta_0(u,v)$, then
  \begin{equation*}
    w^{x_t-\Delta}(t,x^*) \ge w^{x_t}(t+u,x^*),
  \end{equation*}
  with $x^* = x_t-\Delta-vt<0$.
  By the previous discussion this implies
  \begin{equation*}
     w^{x_t-\Delta}(t,0) \ge w^{x_t}(t+u,0) \ge 1-\varepsilon,
  \end{equation*}
  and hence tightness of the family
  $(M(t)-m^\xi(t))_{t\ge 0}$  for $\PP$-a.e.~$\xi\in \Omega_u$.
  As $\PP(\Omega) = \PP(\cup_{u\ge 1} \Omega_u) = 1$, this completes the proof.
\end{proof}

\begin{funding}
  The first and the third author were supported by Deutsche
  Forschungsgemeinschaft through DFG project no.\ 443869423 and by
  Swiss National Science Foundation through SNF project no.\ 200021E\_193063
  in the context of a joint DFG-SNF project within in the framework of
  DFG priority programme SPP 2265 Random Geometric Systems.
\end{funding}

\bibliographystyle{imsart-number}
\bibliography{tightness}

\end{document}